\documentclass[11pt, a4paper, reqno]{amsart}
%\linespread{1.2}
\textwidth 15cm
\textheight 24cm
\oddsidemargin.5cm
\evensidemargin.5cm
\topmargin-5mm
\addtolength{\footskip}{10pt}

\DeclareMathAlphabet{\mathscrbf}{OMS}{mdugm}{b}{n}

%\usepackage{baskervald}

%t The really nice EGA-ish font

\usepackage[utf8]{inputenc}
\usepackage[slovene, english]{babel}
\usepackage{polski}
\usepackage{amsmath,amsfonts,amssymb,mathrsfs,amsthm,amscd, ulem}
\usepackage[pdftex]{graphicx}
\usepackage{tikz-cd}
\usepackage{soul} 
\usepackage{MnSymbol}
\usepackage[all,cmtip]{xy}
\usepackage{enumitem}

\usepackage{slashed}

\usepackage{pgfplots}

\usepackage[mathcal]{euscript}  % the nice mathcal font

\usepackage{url}

\usetikzlibrary{arrows}

 \usepackage{relsize}

\tikzset{
  no line/.style={draw=none,
    commutative diagrams/every label/.append style={/tikz/auto=false}},
  from/.style args={#1 to #2}{to path={(#1)--(#2)\tikztonodes}}}

\title[Moduli of oriented formal groups and the chromatic filtration]{
Moduli stack of oriented formal groups\\ and the chromatic filtration
}
\author{Rok Gregoric}
\thanks{University of Texas at Austin}
\date{\today}
\subjclass[2020]{14A30, 55N22, 55P43}
\address{Department of Mathematics, University of Texas at Austin, Austin, TX 78712, USA}
\email{gregoric@math.utexas.edu}

\newtheorem{theorem}{Theorem}[subsection]
\newtheorem{theoremm}{Theorem}

\newtheorem{corollary}[theorem]{Corollary}
\newtheorem{lemma}[theorem]{Lemma}
\newtheorem{prop}[theorem]{Proposition}

\theoremstyle{definition}
\newtheorem{definition}[theorem]{Definition}

\newtheorem{cons}[theorem]{Construction}
\newtheorem{conns}[theoremm]{Construction}

\newtheorem{ex}[theorem]{Examples}
\newtheorem{exun}[theorem]{Example}
\newtheorem{remark}[theorem]{Remark}
\newtheorem{remmark}[theoremm]{Remark}

\usepackage{tikz, calc}
\usetikzlibrary{matrix,arrows}

\newcommand*{\Cat}{\mathcal C\mathrm{at}_\infty}

\newcommand*{\CAlg}{{\operatorname{CAlg}}}
\newcommand*{\CAlgcn}{{\operatorname{CAlg^{cn}}}}

\newcommand*{\F}{\overline{\mathbf{F}}_p}

\newcommand*{\mC}{\mathcal C}

\newcommand*{\mS}{\mathcal S}

\newcommand*{\sO}{\mathcal O}
\newcommand*{\sF}{\mathscr F}

\newcommand*{\sG}{\mathscr G}

\newcommand*{\E}{\mathbb E_\infty}

\newcommand*{\heart}{\heartsuit}

\newcommand*{\sheafhom}{\mathscr{H}\kern -.5pt om}
\DeclareMathOperator{\Novak}{\mathscr{N}\text{\kern -3pt {\calligra\large ovak}}\,\,}

\usepackage{amsmath,calligra,mathrsfs}
\DeclareMathOperator{\fHom}{\mathscr{H}\text{\kern -3pt {\calligra\large om}}\,}
\DeclareMathOperator{\id}{\operatorname{id}}

\DeclareMathOperator{\Sp}{\operatorname{Sp}}

\DeclareMathOperator{\Fun}{\operatorname{Fun}}

\DeclareMathOperator{\Spec}{\operatorname{Spec}}
\DeclareMathOperator{\Spf}{\operatorname{Spf}}

\DeclareMathOperator{\Shv}{\mathcal S\mathrm{hv}}

\DeclareMathOperator{\Map}{\operatorname{Map}}
\DeclareMathOperator{\QCoh}{\operatorname{QCoh}}
\DeclareMathOperator{\IndCoh}{\operatorname{IndCoh}}

\DeclareMathOperator{\Tot}{\operatorname{Tot}}

\DeclareMathOperator{\fib}{\operatorname{fib}}

\DeclareMathOperator{\MP}{\mathrm{MP}}

\DeclareMathOperator{\M}{\mathcal M^\mathrm{or}_\mathrm{FG}}
\DeclareMathOperator{\Mo}{\mathcal M^\heart_\mathrm{FG}}
\DeclareMathOperator{\Mn}{\mathcal M^{\mathrm{or}, \le n}_\mathrm{FG}}
\DeclareMathOperator{\Mon}{\mathcal M^{\heart, \le n}_\mathrm{FG}}
\DeclareMathOperator{\Men}{\mathcal M^{\heart, = n}_\mathrm{FG}}

\DeclareMathOperator{\G}{\mathbf G}

\DeclareMathOperator{\Mod}{\operatorname{Mod}}

\DeclareMathOperator{\cMod}{\operatorname{cMod}}

\renewcommand{\i}{\infty}

\renewcommand{\Pr}{\mathcal P\mathrm r}
\newcommand{\PrL}{\mathcal P\mathrm r^{\mathrm L}}

\newcommand{\w}{\widehat}

\renewcommand{\i}{\infty}
\renewcommand{\Mn}{\mathcal M^{\mathrm{or}, \le n}_\mathrm{FG}}
\renewcommand{\Mon}{\mathcal M^{\heart, \le n}_\mathrm{FG}}
\renewcommand{\Men}{\mathcal M^{\heart, = n}_\mathrm{FG}}

\DeclareFontFamily{U}{matha}{\hyphenchar\font45}
\DeclareFontShape{U}{matha}{m}{n}{
      <5> <6> <7> <8> <9> <10> gen * matha
      <10.95> matha10 <12> <14.4> <17.28> <20.74> <24.88> matha12
      }{}
\DeclareSymbolFont{matha}{U}{matha}{m}{n}

% Define a subset character from that font (from mathabx.dcl)
% to completely replace the \subset character, you can replace
% \varsubset with \subset

\DeclareMathSymbol{\varsubset}{3}{matha}{"80}

\selectlanguage{slovene}

% Automatically adding hyperlinks to table of contents
\usepackage{color}   %May be necessary if you want to color links
\usepackage[hypertexnames=false]{hyperref}
\hypersetup{
%    colorlinks=true, %set true if you want colored links
    linktoc=all,     %set to all if you want both sections and subsections linked
    linkcolor=black,  %choose some color if you want links to stand out
}

% Making subsections not register on table of contents
%\setcounter{tocdepth}{1}

% Helps hyperref in ToC link correctly after resetting the section counter

\renewcommand{\i}{\infty}
\renewcommand{\o}{\otimes}

\usepackage{epigraph}

\begin{document}

\begin{abstract}
We define a filtration by open substacks on the non-connective spectral moduli stack of formal oriented groups, which simultaneously encodes and relates the chromatic filtration of spectra and the height stratification of the classical moduli stack of formal groups. Using this open filtration, we express various classical constructions in chromatic  homotopy theory, such as chromatic localization, the monochromatic layer, and $K(n)$-localization, in terms of restriction and completion of sheaves in non-connective spectral algebraic geometry.
\end{abstract}

\maketitle

\section*{Introduction}

This is a follow-up to \cite{ChromaticCartoon}. As set out there, we continue to establish how various aspects of chromatic homotopy theory manifest in terms of non-connective spectral algebraic geometry, using the moduli stack of oriented formal groups $\M$.

In the present work, we employ this perspective to discuss the chromatic filtration. We define a filtration by open  substacks $\Mn$ on $\M$, which recovers the height stratification on the underlying ordinary substacks and the height filtration on sheaves. Through the use of this filtration, we express certain fundamental constructions in contemporary chromatic homotopy theory: chromatic localization, the monochromatic layer, and $K(n)$-localization of spectra, in terms of sheaves in spectral algebraic geometry respectively as: restriction to open substacks, supported sections, and adic completion. In this way, we are realizing the heuristic ideas about the connection between spectral algebraic geometry and chromatic homotopy theory, going back to
 Goerss, Hopkins, Miller, Morava, and others.

\subsection*{Background on the chromatic and height filtrations}
For a fixed prime $p$, the \textit{chromatic filtration} of a spectrum $X$, first introduced in \cite{MRW} for the sphere spectrum and in \cite{Ravenel:Bousfield} in general, is a tower of spectra
\begin{equation}\label{chromatic filtration intro}
\ldots \to L_{n+1}X\to L_nX\to L_{n-1}X\to\cdots \to L_2X\to L_1X,
\end{equation}
converging in many cases (e.g.~for all finite spectra $X$) to the $(p)$-completion $X_{(p)}$. For any $n\ge 1$, the \textit{$n$-th chromatic localization} $L_nX$, while technically defined as a certain Bousfield localization, is closely related to so-called $v_i$-periodicity phenomena for all $i\le n$.

A key reason for the fruitfulness of the chromatic filtration is its deep connection to a certain filtration in a very different setting: the \textit{height stratification}
\begin{equation}\label{height filtration intro}
\mathcal M^{\heart, \le 1}_\mathrm{FG}\subseteq
\mathcal M^{\heart, \le 2}_\mathrm{FG}\subseteq
\cdots\subseteq
\mathcal M^{\heart, \le n-1}_\mathrm{FG}\subseteq
\mathcal M^{\heart, \le n}_\mathrm{FG}\subseteq
\mathcal M^{\heart, \le n+1}_\mathrm{FG}\subseteq
\cdots\subseteq
\mathcal M^\heart_\mathrm{FG}\o_{\mathbf Z}\mathbf Z_{(p)},
\end{equation}
of (the $(p)$-localization of) $\Mo,$ the algebro-geometric stack of formal groups\footnote{The superscript $\heart$ is part of our chosen notation $\Mo$. It serves as a  reminder that, while a stack, this is still an object of usual algebraic geometry, as opposed to say spectral or derived algebraic geometry.}. For any $n\ge 1$, the open substacks $\mathcal M^{\heart, \le n}_\mathrm{FG}\subseteq\Mo\o_{\mathbf Z}\mathbf Z_{(p)}$ is the moduli of formal groups of height $\le n$; see \cite[Section 4]{Smithling}, \cite[Lecture 13]{Lurie Chromatic}. The correspondence between the height stratification and the chromatic filtration relates the open substack $\mathcal M^{\heart, \le n}_\mathrm{FG}$ with the chromatic localization $L_nX$.

The standard approach  of relating formal groups with stable homotopy theory proceeds through Quillen's Theorem on complex bordism $\mathrm{MU}$. Using it,  we may identify the second page of the Adams-Novikov spectral sequence for computing the stable stem with quasi-coherent cohomology as
\begin{equation}\label{ANSS}
E^{s,t}_2 = \mathrm{Ext}^{s,t}_{\mathrm{MU}_*\mathrm{MU}}(\mathrm{MU}_*, \mathrm{MU}_*) \cong \mathrm H^s(\Mo; \omega_{\Mo}^{\o t}).
\end{equation}
This perspective, highlighting the role of the stack of formal groups (as opposed to the Lazard ring, classifying formal group laws) was first emphasized by Hopkins and popularized by  \cite{COCTALOS}.
As explained in accounts on chromatic homotopy theory from this perspective, e.g.~\cite[Section 3.6]{PetersonBook}, \cite[Lecture 21]{Lurie Chromatic} or \cite[Lecture 21]{Pstragowski},  chromatic localizations have to do with  support conditions of quasi-coherent sheaves with respect to the height filtration. Morally, $L_n$ behaves like the restriction along the open inclusion $\mathcal M^{\heart, \le n}_\mathrm{FG}\subseteq\Mo\o_{\mathbf Z}\mathbf Z_{(p)}$.
The goal of this paper is to show how this motto, as well as several others concerning related Bousfield localization relevant to chromatic homotopy theory, can be made perfectly rigorous in the world of spectral algebraic geometry.

\subsection*{Spectral algebraic geometry and the main result}

Spectral algebraic geometry is a homotopical analogue of  algebraic geometry, with the fundamental role of commutative rings replaced by $\mathbb E_\infty$-ring spectra. Extensive foundations of the field are laid in \cite{SAG}, though our needs necessitate a non-connective functor of points approach elaborated on in \cite[Section 1]{ChromaticCartoon}. Briefly, we view non-connective spectral stacks as certain kinds of $\infty$-categorical functors from the $\infty$-category of $\E$-rings $\CAlg$ to the $\i$-category of spaces $\mS$ - a setup convenient for discussing moduli problems. The basic formalism of algebraic geometry goes through in this context, giving rise to notions such as global functions and quasi-coherent sheaves.

The main result of our preceding paper may be summarized as follows:

\begin{theoremm}[{\cite[Theorem 9]{ChromaticCartoon}}]\label{Old Main Theorem}
There exists a canonical  non-connective spectral  stack $\M$, such that:
\begin{enumerate}[label = (\roman*)]
\item Its underlying ordinary stack is the ordinary stack of formal groups $\Mo$.
\item There is a canonical equivalence of $\E$-rings $\sO(\M)\simeq S$ between its global functions and the sphere spectrum.
\item The constant quasi-coherent sheaf functor $X\mapsto X\o\sO_{\M}$ induces an equivalence of $\i$-categories
$$
\Sp\simeq\mathrm{IndCoh}(\M),
$$
under an appropriate  definition of ind-coherent sheaves \cite[Definition 2.4.2]{ChromaticCartoon}.
\item The descent spectral sequence for $\sO_{\M}$ recovers the Adams-Novikov spectral sequence, and the isomorphism \eqref{ANSS}.
\end{enumerate}
\end{theoremm}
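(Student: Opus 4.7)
The theorem is cited as the main result of \cite{ChromaticCartoon}, but I sketch the strategy one would adopt to prove it from scratch. The plan is to define $\M\colon\CAlg\to\mS$ as the functor sending an $\E$-ring $R$ to the space of oriented formal groups over $R$, in the sense of \cite{SAG}: a (commutative, one-dimensional) formal group $\hat{\G}$ over $R$ together with an orientation, namely a compatible equivalence between $\Sigma^2 R$ and the dualizing line of $\hat{\G}$. The universal such datum is carried by the periodic complex bordism spectrum $\MP$, which yields a faithfully flat affine cover $\Spec(\MP)\to\M$. Establishing this representability and cover in the non-connective functor-of-points setting of \cite[Section 1]{ChromaticCartoon} is the principal foundational task.

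With the cover in hand, I would derive the four assertions as follows. For (i), the underlying ordinary stack of $\Spec(\MP)$ is $\Spec(L)$ for $L$ the Lazard ring, and the residual $\G_m$-action rescales the coordinate of the formal group law; the corresponding quotient stack is the standard presentation of $\Mo$. For (ii), descent along $\Spec(\MP)\to\M$ expresses $\sO(\M)$ as the totalization of the Amitsur cosimplicial $\E$-ring $[n]\mapsto\MP^{\o(n+1)}$, and convergence of the Adams--Novikov spectral sequence for the sphere identifies this totalization with $S$. Part (iv) then follows by directly comparing the descent spectral sequence for $\sO_\M$ along this cover with the Adams--Novikov spectral sequence, essentially tautologically given (ii), and recovering \eqref{ANSS} on the $E_2$-page.

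The substantive step is (iii). My plan is to show that the constant-sheaf functor $X\mapsto X\o\sO_\M$ is inverse to global sections $\sF\mapsto\Gamma(\M;\sF)$, by descending along $\Spec(\MP)\to\M$ to a Barr--Beck style statement that spectra are recovered from $(\MP,\MP\o\MP)$-comodule data in the appropriate non-connective sense. The main obstacle is foundational: the standard connective definition of ind-coherent sheaves does not apply to a non-connective stack like $\M$, and one must instead use a tailored non-connective variant of $\IndCoh$ designed so that both flat descent holds and $\IndCoh(\Spec(\MP))$ coincides with $\LMod_\MP$. Constructing this framework and verifying descent carefully enough to conclude the equivalence with $\Sp$ is the heart of the argument in \cite{ChromaticCartoon}.
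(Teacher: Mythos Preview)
The paper does not supply a proof of this theorem; it is stated in the introduction as a summary of the main result of the companion paper \cite{ChromaticCartoon}, with an explicit citation and no further argument here. Your sketch is therefore not directly comparable to anything proved in the present paper. That said, the paper does indicate (in the sketch of proof following its main theorem) that the simplicial presentation $\M \simeq \varinjlim \Spec(\MP^{\o(\bullet+1)})$ via periodic complex bordism was indeed the instrument used to establish this result in \cite{ChromaticCartoon}, and your outline is consistent with that: the faithfully flat cover by $\Spec(\MP)$, the identification of global sections via the Amitsur complex and Adams--Novikov convergence, and the recognition that part (iii) hinges on an ad hoc definition of $\IndCoh$ in the non-connective setting (the paper itself later calls this definition ``very artificial''). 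As a sketch of the intended strategy, your proposal is accurate.
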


The non-connective spectral stack $\M$ is called  the \textit{moduli stack of oriented formal groups}. As the name suggests, it is defined as the moduli stack, parametrizing formal groups in spectral algebraic geometry, equipped with an extra structure of an orientation due to Lurie \cite{survey}, \cite{Elliptic 2}. The preceding theorem may be interpreted as asserting that algebraic geometry of the stack $\M$ corresponds to chromatic homotopy theory.

The goal of the present paper is to strengthen this claim, by showing how the chromatic filtration and the height filtration are simultaneously encoded in the $\M$.  Making this precise, our main theorem is:

\begin{theoremm}[Lemma \ref{Lemma 1-line}, Corollary \ref{L_n as restriction}, Proposition \ref{Chromatic filtration = chromatic filtration}, Theorem \ref{Chromatic localization is restriction}]\label{Main Theorem Intro}
There exists a canonical filtration by open non-connective spectral substacks
$$
\mathcal M^{\mathrm{or}, \le 1}_\mathrm{FG}\subseteq
\mathcal M^{\mathrm{or}, \le 2}_\mathrm{FG}\subseteq
\cdots\subseteq
\mathcal M^{\mathrm{or}, \le n-1}_\mathrm{FG}\subseteq
\mathcal M^{\mathrm{or}, \le n}_\mathrm{FG}\subseteq
\mathcal M^{\mathrm{or}, \le n+1}_\mathrm{FG}\subseteq
\cdots\subseteq
\M\o S_{(p)}
$$
on the $(p)$-localization of the moduli stack of oriented formal groups, such that the following hold:
\begin{enumerate}[label = (\roman*)]
\item \label{point 1 of MTI} Passage to underlying ordinary stacks recovers the usual height stratification \eqref{height filtration intro} on the $(p)$-localization of  the ordinary stack of formal groups $\M\o_{\mathbf Z}\mathbf Z_{(p)}$.

\item \label{point 2 of MTI} For a $(p)$-local spectrum $X$, passage to global sections from the constant quasi-coherent sheaf $X\mapsto X\o\sO_{\M}$ recovers the chromatic filtration \eqref{chromatic filtration intro}. In particular, there is an equivalence of spectra
$$\Gamma(\mathcal M^{\mathrm{or}, \le n}_\mathrm{FG}; \sO_{\M}\o X)\simeq L_nX$$
between the sections of the constant quasi-coherent sheaf over $\mathcal M^{\mathrm{or}, \le n}_\mathrm{FG}$, and the $n$-th chromatic localization.

\item \label{point 3 of MTI}For every fixed finite height $1\le n <\infty$, the functor $X\mapsto X\o\sO_{\M}$ induces an equivalence of $\i$-categories
$$
L_n\Sp\,\simeq \,\QCoh(\mathcal M^{\mathrm{or}, \le n}_\mathrm{FG})
$$
between the $L_n$-local spectra and quasi-coherent sheaves on $\Mn$.
\end{enumerate}
\end{theoremm}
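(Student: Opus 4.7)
My plan is to first construct the open substacks so that their underlying ordinary strata recover the classical height stratification, and then to identify chromatic localization with restriction to these opens---first at the level of global sections (part (ii)), and subsequently as an equivalence of $\i$-categories (part (iii)). For the construction, I would exploit the general principle that open substacks of a non-connective spectral stack $\mathcal X$ are in natural bijection with open substacks of its underlying ordinary stack $\mathcal X^\heart$, since both are controlled by open subsets of the common underlying Zariski site. Applied to $\M\otimes S_{(p)}$, together with the identification of underlying ordinary stacks from Theorem \ref{Old Main Theorem}(i), this produces canonical open non-connective spectral substacks $\mathcal M^{\mathrm{or},\le n}_\mathrm{FG}$ lifting the classical height strata $\mathcal M^{\heart,\le n}_\mathrm{FG}$, and part (i) is then tautological.

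For part (ii), the decisive input is the 1-line identification $\Gamma(\mathcal M^{\mathrm{or},\le n}_\mathrm{FG};\sO_{\M})\simeq L_nS_{(p)}$. To prove this, I would exhibit a faithfully flat cover of $\mathcal M^{\mathrm{or},\le n}_\mathrm{FG}$ by affine non-connective spectral schemes associated to Landweber-exact $\E$-rings of height $\le n$ (in the oriented sense of \cite{survey}, \cite{Elliptic 2}), compute $\Gamma$ as the resulting \v{C}ech totalization, and match this totalization with the standard presentation of $L_nS_{(p)}$ as an $E(n)$-Bousfield localization. Once this is in hand, the general equivalence $\Gamma(\mathcal M^{\mathrm{or},\le n}_\mathrm{FG};\sO_{\M}\otimes X)\simeq L_nX$ follows by combining a projection-formula-style compatibility of sections with the constant-sheaf functor $X\mapsto X\otimes\sO_{\M}$ and the smashing property of $L_n$ at finite height.

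For part (iii), the general formalism of non-connective spectral algebraic geometry produces a smashing Bousfield localization along the open immersion $\mathcal M^{\mathrm{or},\le n}_\mathrm{FG}\hookrightarrow\M\otimes S_{(p)}$. Transporting this through the equivalence $\Sp\simeq\IndCoh(\M)$ of Theorem \ref{Old Main Theorem}(iii), and using part (ii) to identify the kernel of restriction with the class of $L_n$-acyclic spectra, yields the asserted equivalence with $L_n\Sp$; the passage from $\IndCoh$ to $\QCoh$ on the open substack should be unproblematic at finite height, where the formal-group moduli is sufficiently well-behaved. The main obstacle is the 1-line identification underlying part (ii): parts (i) and (iii) are essentially formal consequences of it combined with Theorem \ref{Old Main Theorem}, whereas matching the descent totalization over a Landweber cover of $\mathcal M^{\mathrm{or},\le n}_\mathrm{FG}$ with $L_nS_{(p)}$ is where the genuine chromatic content enters, and where the bridge between the algebraic geometry of $\M$ and Bousfield localization at $E(n)$ must actually be forged.
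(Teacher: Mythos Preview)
Your construction of the open substacks via lifting the classical height strata, and the deduction of (i), is essentially the paper's decompression mechanism. The organization of (ii) and (iii), however, misplaces the genuine content. For (ii), the step you call ``projection-formula-style compatibility'' is not available as a formal fact: the global sections functor $\Gamma(\Mn;-)$ is a right adjoint and does not a priori commute with $-\otimes X$, so one cannot pass from the case $X=S$ to general $X$ by a projection formula. What the paper does instead is observe that the \v{C}ech presentation via a Lubin--Tate spectrum $E_n$ gives $\Gamma(\Mn;\sO_{\M}\otimes X)\simeq\Tot(E_n^{\otimes(\bullet+1)}\otimes X)$ directly for every $X$; this is the $E_n$-nilpotent completion of $X$, and its identification with $L_nX$ is precisely the Hopkins--Ravenel Smash Product Theorem. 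You invoke the ``smashing property of $L_n$'', which is the same theorem, but your argument structure treats it as an afterthought rather than the sole substantive input.

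The more serious gap is in (iii). You write that the passage from $\IndCoh$ to $\QCoh$ on the open substack ``should be unproblematic at finite height'', but this is exactly where the difficulty is concentrated: since restriction to a quasi-compact open is smashing on quasi-coherent sheaves, the comparison $\IndCoh(\Mn)\to\QCoh(\Mn)$ being an equivalence is essentially equivalent to $L_n$ being a smashing localization, i.e.\ again the Smash Product Theorem. The paper avoids $\IndCoh$ entirely for (iii) and works directly: the \v{C}ech presentation yields $\QCoh(\Mn)\simeq\Tot(\Mod_{E_n^{\otimes(\bullet+1)}})$, and descendability of the map $L_nS\to E_n$ (the Smash Product Theorem, in the form of \cite[Theorem 4.18]{Mathew}) identifies this totalization with $\Mod_{L_nS}\simeq L_n\Sp$. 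In fact the paper reverses your logical order, proving (iii) first and reading off (ii) as an immediate corollary; your assessment that (iii) is an ``essentially formal consequence'' of (ii) and Theorem \ref{Old Main Theorem} has the dependency backwards.
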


\begin{proof}[Sketch of proof]
For any fixed height $1\le n<\infty$, we define the non-connective spectral substack $\mathcal M_\mathrm{FG}^{\mathrm{or}, \le n}$ as the moduli stack of those  oriented formal groups, whose underlying ordinary formal groups have height $\le n$. In place of  the simplicial description of the moduli stack of oriented formal groups
$$
\xymatrix{
\M\simeq \varinjlim\Big( \cdots  \ar@<-1.5ex>[r]\ar@<-.5ex>[r] \ar@<.5ex>[r] \ar@<1.5ex>[r]& \Spec(\MP\o \MP\o \MP)\ar@<-1ex>[r]\ar[r] \ar@<1ex>[r] &\Spec(\MP\o \MP)\ar@<-.5ex>[r]\ar@<.5ex>[r] & \Spec(\MP)\Big )
}
$$
in terms of the periodic complex bordism spectrum $\MP$, that was instrumental in proving Theorem \ref{Old Main Theorem} in \cite{ChromaticCartoon}, we make use of the simplicial presentation
$$
\xymatrix{
\mathcal M^{\mathrm{or}, \le n}_\mathrm{FG}\simeq \varinjlim\Big( \cdots  \ar@<-1.5ex>[r]\ar@<-.5ex>[r] \ar@<.5ex>[r] \ar@<1.5ex>[r]& \Spec(E_n\o E_n\o E_n)\ar@<-1ex>[r]\ar[r] \ar@<1ex>[r] &\Spec(E_n\o E_n)\ar@<-.5ex>[r]\ar@<.5ex>[r] & \Spec(E_n)\Big )
}
$$
in terms of the Lubin-Tate spectrum $E_n$. Using this presentation, point \ref{point 1 of MTI} amounts to the standard observation that the Hopf algebroid $(\pi_0(E_n), \pi_0(E_n\otimes E_n))$ defines a groupoid presentation of the ordinary stack $\Mon$ of formal groups of height $\le n$.
For point \ref{point 2 of MTI}, the simplicial presentation allows us to recognize 
$$
\Gamma(\Mn; \, X\o \sO_{\M})\simeq \Tot(E_n^{\o \bullet+1}\o X)\simeq X^\wedge_{E_n}
$$
as the $E_n$-nilpotent completion of the spectrum $X$. That this coincides with the chromatic localization $L_nX$ is equivalent to the celebrated Hopkins-Ravenel Smash Product Theorem. The proof of point \ref{point 3 of MTI} is similar:  the $\i$-category of quasi-coherent sheaves on the non-connective spectral stack $\Mn$ may be written as the totalization
$$
\QCoh(\Mn)\simeq \Tot\big( \Mod_{E_n^{\otimes (\bullet +1)}} \big).
$$
Its equivalence to the localized stable $\i$-category $L_n\Sp$ is well-known, e.g.~\cite[Proposition 3.4.1]{Picard}, and follows via standard descent techniques from the fact that the $\E$-ring map $L_nS\to E_n$ is descendable - another consequence of the Smash Product Theorem.
\end{proof}

\begin{remmark}
One key difference between Theorems \ref{Old Main Theorem} and \ref{Main Theorem Intro} is in their respective points \ref{point 3 of MTI}. Obtaining the $\i$-category of spectra $\Sp$ in Theorem \ref{Old Main Theorem}   requires the use of the $\i$-category of $\IndCoh(\M)$, a `renormalized' version of $\QCoh(\M)$. The finite height $n$ version of Theorem \ref{Main Theorem Intro}, on the other hand, features the usual $\i$-category $\QCoh(\mathcal M^{\mathrm{or}, \le n}_\mathrm{FG})$. This suggests that the failure of the canonical functor $\IndCoh(\M)\to\QCoh(\M)$ to be an equivalence of $\i$-categories is closely related to the failure of chromatic convergence for arbitrary spectra. 
\end{remmark}

\subsection*{Outline of the paper}

Let us  outline the contents of this paper. Much of the work is spent  laying the necessary foundations  for working with the non-connective spectral stack  $\mathcal M_\mathrm{FG}^{\mathrm{or}, \le n}$. 
Unwinding its construction, sketched above in the idea of proof of Theorem \ref{Main Theorem Intro}, we may view it as an instance of the following general procedure:

\begin{conns}\label{Compression and decompression 101}
Let $X$ be a non-connective spectral stack $X$ with underlying ordinary stack $X^\heart$. There exists a canonical map $X\to X^\heart\circ \pi_0$, which we call \textit{the compression map}, and which sends a map $\Spec(A)\to X$ into the corresponding map $\Spec(\pi_0(A))\to X^\heart$ for any $\E$-ring $A$. Given a map of ordinary stacks $Y^\heart\to X^\heart$, we define its \textit{decompression} to be a map of non-connective spectral stacks $Y\to X$, defined through the pullback square
$$
\begin{tikzcd}
Y\ar{r} \ar{d} & Y^\heart\circ\pi_0\ar{d}\\
X\ar{r} & X^\heart\circ\pi_0.
\end{tikzcd}
$$
\end{conns}

\begin{remmark}
While being a fully faithful embedding of ordinary stacks into non-connective spectral stacks, the compression map is different from the usual such embedding under which we casually identify ordinary stacks as special instances of spectral ones. That would not be the case for the analogous embeddings in the context of connective spectral algebraic geometry, thanks to the adjunction of $\i$-categories $ \pi_0: \CAlgcn\rightleftarrows \CAlg^\heart  $. But it is the case non-connectively, where the functor $\pi_0: \CAlg\to \CAlg^\heart $ is  no longer an adjoint to the subcategory inclusion $\CAlg^\heart \subseteq \CAlg$ of ordinary commutative rings into $\E$-rings.
\end{remmark}

\subsubsection*{Section 1}
The  decompression procedure of Construction \ref{Compression and decompression 101} in general leads to some rather ill-behaved objects. But in certain special cases, it turns out to be a sensible construction. In Section \ref{Section 1}, we prove this is the case for open immersions $U^\heart\to X^\heart$, leading to a sensible theory of corresponding open substacks $U\subseteq X$ of non-connective spectral stacks. We characterize the $\i$-category of quasi-coherent sheaves on $U$ as the semi-orthogonal complement to the quasi-coherent sheaves on $X$, with homotopy sheaves supported along the reduced closed complement $K :=X^\heart - U^\heart\subseteq X^\heart$.

\subsubsection*{Section 2}
We spend Section \ref{Section 2} with a straightforward goal: to define a good notion of the formal completion of a non-connective spectral stack $X$ along a closed reduced substack $K\subseteq X^\heart.$ 
Under some reasonable conditions, we establish a canonical equivalence
\begin{equation}\label{compactformalI}
\QCoh(X^\wedge_K)\simeq \QCoh_K(X).
\end{equation}
between the $\i$-categories of quasi-coherent sheaves on the formal completion $X^\wedge_K$, and quasi-coherent sheaves on $X$ with support (of homotopy sheaves) in $K$.
This however requires us to work out some basics of formal algebraic geometry in the non-connective setting, based on complete adic $\E$-rings  - $\E$-rings $A$ together with an adic topology on $\pi_0(A)$, with respect to which they are complete. In the ``locally-ringed-space" connective setting, such a theory is developed extensively in \cite[Chapter 8]{SAG}, but the non-connective ``functor of points" introduces a host of new challenges, making it rather tricky.
 One main reason for this is that, in the absence of something like a complete Noetherian local assumption, adic completeness fails to satisfy flat descent. 

To circumvent this difficulty, we spend a large portion of Section \ref{Section 2} juggling  non-complete adic $\E$-rings, which have good descent properties, and complete adic $\E$-rings, which are the real central objects of formal algebraic geometry.  As a paradigmatic example, we define the formal completion $X^\wedge_K$ in two steps. First, we use decompression to define  the \textit{adic pre-completion} $X_K$, as a functor $\CAlg_\mathrm{ad}\to\mS$  from the $\i$-category of adic $\E$-rings. And then, for an arbitrary (pre)stack  $Y:\CAlg_\mathrm{ad}\to \mS$, we define its \textit{completion} $Y^\wedge : \CAlg_\mathrm{ad}^\mathrm{cpl}\to\mS$ to be the restriction to the full subcategory $\CAlg_\mathrm{ad}^\mathrm{cpl}\subseteq\CAlg_\mathrm{ad}$ of complete adic $\E$-rings. Formal completion $X^\wedge_K$ is then  defined to be the completion of the adic pre-completion $X_K$.

\subsubsection*{Section 3}
We finally turn to proving Theorem \ref{Main Theorem Intro} in Section \ref{Section 3}. That largely amounts to  specializing the abstract theory of the previous two sections to the special case of the stack $\M$. Other than proving the statement of Theorem \ref{Main Theorem Intro},  we also provide  spectral-algebro-geometric description of various related notions  in chromatic homotopy theory. Recall, for instance from \cite[Definition 7.5.1]{Orange Book} or \cite[Lectures 32 \& 34]{Lurie Chromatic},
 that the \textit{chromatic acyclization} $C_nX$ and the \textit{monochromatic layer} $M_nX$ of a $(p)$-local spectrum $X$ are defined to sit inside fiber sequences of spectra
$$
C_n X\to X\to L_n X
\qquad
M_nX\to L_nX\to L_{n-1}X. 
$$
They roughly encode all the information about $X$ at chromatic heights $> n$ and $=n$ respectively.
In terms of formal groups, they have to do with the ordinary moduli stacks $\mathcal M^{\heart, \ge n+1}_\mathrm{FG}$ and $\mathcal M^{\heart, = n}_\mathrm{FG}$ of formal groups of height $\ge n+1$ and exact height $n$ respectively.  We give a spectral-algebro-geometric  description of chromatic acyclization and the monochromatic layer as local cohomology, or equivalently sections with specified support.

\begin{theoremm}[Corollary \ref{C_n as local cohomology}, Lemma \ref{monochrom from qcoh intro}]
For any  finite $(p)$-local spectrum $X$, there is a canonical equivalence of spectra
$$
C_nX\,\simeq \,\Gamma_{\mathcal M^{\heart, \ge n+1}_\mathrm{FG}}(\M\o S_{(p)}; \sO_{\M}\o X).
$$
Similarly, for any  $(p)$-local spectrum $X$, there is a canonical equivalence of spectra
$$
M_nX\,\simeq \,\Gamma_{\mathcal M_\mathrm{FG}^{\heart, =n}}(\mathcal M^{\mathrm{or}, \le n}_\mathrm{FG}; \sO_{\M}\o X).
$$
\end{theoremm}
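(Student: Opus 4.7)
The plan is to derive both equivalences from a single general principle: the open-closed recollement fiber sequence for sections supported along a reduced closed substack, developed for non-connective spectral stacks in Section \ref{Section 1} and specialized to the height filtration in Section \ref{Section 3}. Concretely, for any open immersion $j : U\subseteq X$ of non-connective spectral stacks with reduced closed complement $K = X^\heart - U^\heart$, and any $\sF \in \QCoh(X)$, one obtains a canonical fiber sequence of spectra
$$\Gamma_K(X;\sF) \to \Gamma(X;\sF) \to \Gamma(U;j^*\sF),$$
with the left-hand term \emph{defining} the supported sections. Both claims then reduce to identifying the middle and right terms for two carefully chosen open-closed pairs.

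For the first equivalence, I would apply this recollement to the open immersion $\Mn \subseteq \M\otimes S_{(p)}$, whose reduced closed complement at the level of underlying ordinary stacks is $\mathcal M^{\heart,\ge n+1}_\mathrm{FG}$, and to the constant quasi-coherent sheaf $\sO_{\M}\otimes X$. Theorem \ref{Main Theorem Intro}\ref{point 2 of MTI} identifies the right term with $L_nX$. For the middle term, Theorem \ref{Old Main Theorem}(ii) gives $\sO(\M)\simeq S$; writing $\M\otimes S_{(p)}$ as the filtered union of the $\Mn$ converts the global sections into the limit $\lim_n L_nX$, which by the Hopkins--Ravenel chromatic convergence theorem equals $X_{(p)}\simeq X$ for finite $(p)$-local $X$. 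This is the sole place where finiteness is used. Comparing the resulting fiber sequence with the defining sequence $C_nX\to X\to L_nX$ of the chromatic acyclization then yields the identification.

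For the second equivalence, the analogous argument runs inside the smaller ambient stack $\Mn$, applied to the open immersion $\mathcal M^{\mathrm{or},\le n-1}_\mathrm{FG}\subseteq \Mn$. Its reduced closed complement, at the level of underlying ordinary stacks, is precisely the locus of formal groups of exact height $n$, namely $\mathcal M^{\heart,=n}_\mathrm{FG}$, as is classical and compatible with the standard stratification already recovered by Theorem \ref{Main Theorem Intro}\ref{point 1 of MTI}. Now no chromatic convergence is needed: Theorem \ref{Main Theorem Intro}\ref{point 2 of MTI} applied to heights $n$ and $n-1$ identifies the middle and right terms of the recollement sequence with $L_nX$ and $L_{n-1}X$ respectively, and comparison with the defining sequence $M_nX \to L_nX \to L_{n-1}X$ of the monochromatic layer yields the claim.

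The principal obstacle is not either fiber-sequence comparison, which is essentially formal once the framework is in place, but rather establishing the sections-with-support formalism in the non-connective spectral setting so that the above recollement sequence exists and behaves as expected. This rests on the open-substack theory of Section \ref{Section 1}, together with the geometric identifications of the reduced closed complements of $\Mn$ inside $\M\otimes S_{(p)}$, and of $\mathcal M^{\mathrm{or},\le n-1}_\mathrm{FG}$ inside $\Mn$, with the appropriate height loci; once these inputs are in hand, both equivalences collapse into the two direct fiber-sequence comparisons above.
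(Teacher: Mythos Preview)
Your argument for the second equivalence is correct and matches the paper's proof of Lemma \ref{monochrom from qcoh intro} essentially verbatim: apply the recollement fiber sequence to the open immersion $\mathcal M^{\mathrm{or},\le n-1}_\mathrm{FG}\subseteq \Mn$ and identify the non-supported terms via Corollary \ref{L_n as restriction}.

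Your argument for the first equivalence, however, contains a genuine gap. The assertion that $\M\otimes S_{(p)}$ is the filtered union of the $\Mn$ is false: the colimit $\varinjlim_n \Mn$ is the strictly smaller open substack $\mathcal M^{\mathrm{or},<\infty}_\mathrm{FG}$ of oriented formal groups of \emph{finite} height (see Remark \ref{moduli stack of finite height}), and the complement at infinite height is nonempty. Consequently you cannot conclude $\Gamma(\M\otimes S_{(p)};\sO_{\M}\otimes X)\simeq \varprojlim_n L_nX$ simply by ``writing the stack as a union.'' The paper instead identifies the middle term directly: the \v{C}ech presentation of $\M$ via $\MP$ gives $\Gamma(\M\otimes S_{(p)};\sO_{\M}\otimes X)\simeq X^\wedge_{\MP}$, the $\MP$-nilpotent completion, which agrees with $X$ for finite $X$ by \cite[Lemma 2.4.5]{ChromaticCartoon}. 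This is where finiteness enters, and it does not invoke chromatic convergence at all. Your route could be salvaged by replacing the union claim with the sheaf-level chromatic convergence Theorem \ref{CCT} (which does give $\sF\simeq\varprojlim_n \sF\vert_{\Mn}$ for $\sF=\sO_{\M}\otimes X$ with $X$ finite), but note that in the paper's logical order this theorem comes \emph{after} Corollary \ref{C_n as local cohomology} and is a substantially harder input than the direct $\MP$-nilpotent completion argument.
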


The second of these local cohomology descriptions extends an equivalence on the level of $\i$-categories:

\begin{theoremm}[Proposition \ref{monochrom from qcoh}]\label{MonochTI}
The equivalence of $\i$-categories of Theorem \ref{Main Theorem Intro}, \ref{point 3 of MTI} restrict to an equivalence of subcategories
$$
M_n\mathrm{Sp}
\,\simeq\,
\QCoh_{\mathcal M^{\heart, =n}_\mathrm{FG}}(\mathcal M^{\mathrm{or}, \le n}_\mathrm{FG})
$$
between  the height $n$ monochromatic spectra, and the quasi-coherent sheaves on $\mathcal M^{\mathrm{or}, \le n}_\mathrm{FG}$ with support along the reduced closed substack $\mathcal M_\mathrm{FG}^{\heart, =n}\subseteq \mathcal M^{\heart, \le n}_\mathrm{FG}$.
\end{theoremm}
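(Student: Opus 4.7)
The plan is to present both sides of the claimed equivalence as the fiber (in $\PrLSt$) of a pair of naturally equivalent functors between the $\i$-categories appearing in Theorem \ref{Main Theorem Intro} \ref{point 3 of MTI} at the adjacent heights $n$ and $n-1$. First I would observe that $M_n\Sp$ is, by its very definition as the monochromatic category, the fiber of the localization $L_{n-1}: L_n\Sp \to L_{n-1}\Sp$. Dually, the general formalism of open substacks and their supported sections developed in Section \ref{Section 1} identifies $\QCoh_{\mathcal M^{\heart, =n}_\mathrm{FG}}(\Mn)$ as the kernel (semi-orthogonal complement) of the restriction functor $j^*: \QCoh(\Mn) \to \QCoh(\mathcal M^{\mathrm{or}, \le n-1}_\mathrm{FG})$ along the open immersion $j: \mathcal M^{\mathrm{or}, \le n-1}_\mathrm{FG} \hookrightarrow \Mn$. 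This relies on the fact that the reduced closed complement of $\mathcal M^{\heart,\le n-1}_\mathrm{FG}$ inside $\mathcal M^{\heart,\le n}_\mathrm{FG}$ is precisely $\mathcal M^{\heart, =n}_\mathrm{FG}$, combined with the underlying-stack identification of Theorem \ref{Main Theorem Intro} \ref{point 1 of MTI}.

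Next I would verify that the square of $\i$-categories whose horizontal arrows are the equivalences of Theorem \ref{Main Theorem Intro} \ref{point 3 of MTI} at heights $n$ and $n-1$, whose left vertical arrow is $L_{n-1}$, and whose right vertical arrow is $j^*$, commutes up to canonical natural equivalence. This will follow directly from the construction of the horizontal equivalences via $X \mapsto (X \o \sO_{\M})|_{\Mn}$, with inverse given by global sections as in Theorem \ref{Main Theorem Intro} \ref{point 2 of MTI}: indeed $j^*\bigl((X \o \sO_{\M})|_{\Mn}\bigr) \simeq (X \o \sO_{\M})|_{\mathcal M^{\mathrm{or}, \le n-1}_\mathrm{FG}}$, whose image under the inverse equivalence at height $n-1$ is $\Gamma(\mathcal M^{\mathrm{or}, \le n-1}_\mathrm{FG}; X \o \sO_{\M}) \simeq L_{n-1}X$, again by Theorem \ref{Main Theorem Intro} \ref{point 2 of MTI}. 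Passing to fibers of the two vertical arrows then yields the desired equivalence of subcategories.

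The main obstacle I anticipate is promoting the objectwise match $j^* \simeq L_{n-1}$ to a genuine natural equivalence of colimit-preserving functors between stable presentable $\i$-categories, rather than just a pointwise identification. I would address this by tracing the comparison through the simplicial presentations of $\Mn$ and $\mathcal M^{\mathrm{or}, \le n-1}_\mathrm{FG}$ by Lubin-Tate spectra sketched after Theorem \ref{Main Theorem Intro}, where both functors are manifestly realized as the totalizations of Amitsur-type cosimplicial objects, and where the comparison reduces to the standard fact that $E_{n-1}$-nilpotent completion factors naturally through $E_n$-nilpotent completion.
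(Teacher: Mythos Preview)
Your proposal is correct and follows essentially the same route as the paper, with a minor repackaging. The paper characterizes both full subcategories as the locus where the counit $\Gamma_{\Men}(\sF)\to\sF$ (respectively $M_nX\to X$) is an equivalence, and then reduces to an auxiliary lemma identifying $M_nX\simeq\Gamma_{\Men}(\Mn;\sO_{\M}\o X)$, whose proof in turn passes through the cofiber sequence $\Gamma_K(\sF)\to\sF\to\sF|_U$. You instead take the complementary characterization as the kernel of $j^*$ (respectively of $L_{n-1}$) and match those directly; these two descriptions are interchangeable via that same cofiber sequence, so the underlying argument is the same.

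Your anticipated obstacle is not a genuine one, and the Amitsur-complex workaround is unnecessary. The commutativity of your square is automatic from the functoriality of quasi-coherent pullback: the equivalence $\Phi_m:L_m\Sp\to\QCoh(\mathcal M^{\mathrm{or},\le m}_\mathrm{FG})$ is induced by $p_m^*$ (pullback along the structure map $p_m:\mathcal M^{\mathrm{or},\le m}_\mathrm{FG}\to\Spec(S_{(p)})$), and since $p_{n-1}=p_n\circ j$ we have a canonical natural equivalence $j^*\circ p_n^*\simeq p_{n-1}^*$. Because $p_{n-1}^*$ lands in $\QCoh(\mathcal M^{\mathrm{or},\le n-1}_\mathrm{FG})\simeq L_{n-1}\Sp$, it automatically factors through $L_{n-1}$, i.e.\ $p_{n-1}^*\simeq p_{n-1}^*\circ L_{n-1}$ naturally on $L_n\Sp$. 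No cosimplicial bookkeeping is required.
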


We also use the Algebraic Chromatic Convergence Theorem of \cite{Goerss} to prove  a chromatic convergence result Theorem \ref{CCT} for quasi-coherent sheaves on $\M\o S_{(p)}$ under a coherence assumption. As a corollary, we recover  the usual Hopkins-Ravenel Chromatic Convergence Theorem for finite spectra.

Using non-connective formal spectral geometry, as developed in Section \ref{Section 2}, we connect the formal completion of $\Mn$ along the reduced closed  ordinary substack $\Men\subseteq\Mon$ with $K(n)$-complete spectra. As in previous cases, we obtain

\begin{theoremm}[Theorem \ref{knloc}]\label{LTI}
The equivalence of $\i$-categories of Theorem \ref{Main Theorem Intro}, \ref{point 3 of MTI} gives rise to the equivalence 
$$
L_{K(n)}\Sp\simeq \QCoh((\Mn)^\wedge_{\Men})
$$
between the $\i$-category of $K(n)$-local spectra and the quasi-coherent sheaves on the formal completion $(\Mn)^\wedge_{\Men}$. In terms of this equivalence of $\i$-categories, the $K(n)$-localization $L_{K(n)}X$ corresponds to the quasi-coherent sheaf $(\sO_{\Mn}\o X)^\wedge_{\Men}$ for any  $(p)$-local spectrum $X$.
\end{theoremm}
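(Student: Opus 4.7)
The plan is to combine two equivalences already established in the paper with one classical input from chromatic homotopy theory. Applying the formal-completion/support-of-sections equivalence \eqref{compactformalI} of Section \ref{Section 2} to the pair $(\Mn,\Men)$ and chaining with Theorem \ref{MonochTI} produces
$$
M_n\Sp \,\simeq\, \QCoh_{\Men}(\Mn)\,\simeq\, \QCoh\bigl((\Mn)^\wedge_{\Men}\bigr).
$$
The classical Hovey--Strickland theorem provides an equivalence of stable $\i$-categories $L_{K(n)}\Sp\simeq M_n\Sp$, implemented by the pair $M_n$ and $L_{K(n)}$ restricted to these subcategories. Composing these three equivalences yields the desired $L_{K(n)}\Sp \simeq \QCoh((\Mn)^\wedge_{\Men})$.

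To prove the second assertion, I would trace the object $L_{K(n)}X$ through this chain. Under Hovey--Strickland, $L_{K(n)}X$ corresponds to $M_n L_{K(n)}X\simeq M_n X$. Under Theorem \ref{MonochTI} the latter corresponds to the local-cohomology sheaf $\Gamma_{\Men}(\sO_{\Mn}\o X)$ in $\QCoh_{\Men}(\Mn)$, since by point \ref{point 3 of MTI} of Theorem \ref{Main Theorem Intro} the equivalence on $L_n\Sp$ sends $M_n X$ to $\sO_{\Mn}\o M_n X \simeq \Gamma_{\Men}(\sO_{\Mn}\o X)$. Finally, under \eqref{compactformalI} this corresponds to $(\sO_{\Mn}\o X)^\wedge_{\Men}$, since the equivalence $\QCoh_K(X)\simeq \QCoh(X^\wedge_K)$ of Section \ref{Section 2} is designed to intertwine local cohomology on one side with adic completion on the other.

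The main obstacle will be verifying the hypotheses needed to apply the machinery of Section \ref{Section 2} to the pair $(\Mn,\Men)$. Using the simplicial presentation of $\Mn$ by the Lubin--Tate spectrum $E_n$ from the proof sketch of Theorem \ref{Main Theorem Intro}, the situation locally reduces to adic completion of modules at the maximal ideal $I_n\subseteq\pi_0(E_n)$ of the complete Noetherian local Lubin--Tate ring, which is the well-behaved paradigm of formal completion. A subtle point is descent: the cosimplicial presentation by $E_n^{\o(\bullet+1)}$ must be compatible with the formal-completion construction, which in the non-connective setting of Section \ref{Section 2} requires careful juggling of adic and complete adic $\E$-rings, as flagged in the introductory discussion of that section. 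Once that is in hand, the classical Greenlees--May local duality at each cosimplicial level identifies supported sections with adic completion and propagates upward to the stack, finishing the identification $L_{K(n)}X\simeq(\sO_{\Mn}\o X)^\wedge_{\Men}$.
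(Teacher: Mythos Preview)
Your proof is correct but takes a genuinely different route from the paper's. You chain together Proposition~\ref{monochrom from qcoh} (which is Theorem~\ref{MonochTI}), the Greenlees--May style equivalence $\QCoh_{\Men}(\Mn)\simeq\QCoh((\Mn)^\wedge_{\Men})$ (Proposition~\ref{mono vs K(n)-loc 1}, which is the instance of Proposition~\ref{all about formal QCoh} you need; the finite-presentation hypothesis is verified there via the Landweber ideal), and the classical Hovey--Strickland equivalence $M_n\Sp\simeq L_{K(n)}\Sp$. The paper instead proves Theorem~\ref{knloc} \emph{directly}, without assuming Hovey--Strickland: it identifies the formal completion geometrically as $(\Mn)^\wedge_{\Men}\simeq\Spf(E_n)/\mathbb G_n$ (Lemmas~\ref{lemma ichi} and~\ref{lemma ni}, using Lubin--Tate deformation theory and the orbit description of $\Men$), then passes to quasi-coherent sheaves to get $\Mod_{E_n}^{h\mathbb G_n}$, and finally invokes the Devinatz--Hopkins/Mathew identification of this with $L_{K(n)}\Sp$.

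The tradeoff is deliberate and is flagged in the paper just before Lemma~\ref{lemma ichi}: the paper wants to \emph{recover} the equivalence $M_n\Sp\simeq L_{K(n)}\Sp$ as a corollary of its spectral-algebro-geometric picture (see the corollary following Theorem~\ref{knloc} and the discussion after Theorem~\ref{LTI} in the introduction), rather than take it as an input. Your argument is shorter and perfectly valid if one is willing to import Hovey--Strickland, but it gives up that independent derivation; the paper's route is longer but connects the formal completion to the Morava stabilizer group action on $\Spf(E_n)$, which is of independent interest and ties back to the author's earlier work in \cite{DevHop}.
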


In light of Theorems \ref{MonochTI} and \ref{LTI}, the equivalence of $\i$-categories \eqref{compactformalI} between quasi-coherent sheaves with prescribed support and quasi-coherent sheaves on the formal completion, gives rise to an equivalence $M_n\Sp\simeq L_{K(n)}\Sp$ between the $n$-th monochromatic layer and $K(n)$-local spectra, thus recovering another standard result of chromatic homotopy theory, e.g.~ \cite[Theorem 6.19]{Hovey-Strickland} or \cite[Lecture 33, Proposition 12]{Lurie Chromatic}.

Finally, we relate this to our previous work in \cite{DevHop}. Let the map of ordinary stacks $\Spec (\F)\to \Mon$ classify a formal group of exact height $n$ over $\F$. By the work of Lurie  \cite[Theorem  5.1.5,  Remark  6.0.7]{Elliptic 2}, the  Lubin-Tate spectrum $E_n$ may be described through the equivalence of non-connective spectral stacks
$$
\Spf(E_n)\simeq
(\Mn)^\wedge_{\Spec(\F)}.
$$
On the other hand, it is well known, \cite[Theorem 4.3.8]{Smithling}, \cite[Lecture 19, Proposition 1]{Lurie Chromatic}, or \cite[Theorem 17.9]{Pstragowski} that the stack $\Men$ of formal groups of height exactly $n$ is equivalent to the quotient stack $\Spec(\F)/\mathbb G_n$ along the Morava stabilizer group $\mathbb G_n$. By combining all of these facts with some formal properties of the formal completion construction, we obtain a series of equivalences of non-connective formal spectral stacks
\begin{eqnarray*}
(\Mn)^\wedge_{\Men} &\simeq & (\Mn)^\wedge_{\Spec(\F)/\mathbb G_n}\\
&\simeq & \big((\Mn)^\wedge_{\Spec(\F)}\big)/\mathbb G_n\\
&\simeq & \Spf(E_n)/\mathbb G_n.
\end{eqnarray*}
The quotient at the end is taken along the action of the Morava stabilizer group on the non-connective affine formal spectral scheme $\Spf(E_n)$ which is discussed more extensively from this perspective in \cite[Proposition 2.7]{DevHop}. The above equivalence of non-connective formal spectral stacks induces, in light of Theorem \ref{LTI}, on quasi-coherent sheaves the equivalence of $\i$-categories
\begin{equation}\label{eq1I}
L_{K(n)}\Sp\simeq \Mod_{E_n}^{\mathbb G_n}
\end{equation}
between $K(n)$-local spectra and spectral Morava modules, first proved in \cite[Proposition 10.10]{Mathew}. Upon passage to global functions, Theorem \ref{LTI} similarly recovers the equivalence of $\E$-rings
\begin{equation}\label{eq2I}
L_{K(n)}S\simeq E_n^{h\mathbb G_n}
\end{equation}
of the \textit{Devinatz-Hopkins Theorem} of \cite{Devinatz-Hopkins}. Unwinding the above-sketched deductions of the equivalences \eqref{eq1I} and \eqref{eq2I} from Theorem \ref{LTI} reveals them to be nothing but the arguments given to prove these results in \cite[Proofs of Corollary 2.17 and Theorem 2.14]{DevHop}.
It is in this sense that our prior work in the formal setting in \cite{DevHop} is compatible with the ``global" results of this paper and its predecessor \cite{ChromaticCartoon}.

\subsection*{Acknowledgments}
I am grateful to David Ben-Zvi, Andrew Blumberg, and Paul Goerss for  the support, help, encouragement, and advice regarding this project.
Thanks also to Tom Gannon, Sam Raskin, and Saad Slaoui for useful conversations.

\section{Some constructions in non-connective spectral algebraic geometry}\label{Section 1}
For the needs of this paper, we must discuss a few basic constructions in non-connective spectral algebraic geometry. We will adopt the setting and notation  from the functor of points approach laid out in \cite[Chapter 1]{ChromaticCartoon}.
 Unlike the constructions discussed there however, our present needs necessitate focusing on a less familiar construction, which we call \textit{compression}, whose role is not (explicitly) recognized in \cite{SAG}.

\subsection{Compression \& open immersions}

Let $X$ be a non-connective spectral stack. It has an associated underlying ordinary stack $X^\heart$. Under the usual embedding $\Shv^\heart_\mathrm{fpqc}\subseteq\Shv^\mathrm{nc}_\mathrm{fpqc}$, there is no canonical map connecting $X$ and $X^\heart$, like the truncation map $X^\heart\to X$ in the connective case.

 \begin{prop}\label{compressive adjunction}
 The underlying ordinary stack functor $(-)^\heart:\Shv_\mathrm{fpqc}^\mathrm{nc}\to\Shv_\mathrm{fpqc}^\heart$ admits a fully faithful right adjoint $(-) \circ \pi_0 : \Shv_\mathrm{fpqc}^\heart\to\Shv_\mathrm{fpqc}^\mathrm{nc}.$
 \end{prop}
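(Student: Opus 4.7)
The plan is to obtain the adjunction by a formal Kan extension argument combined with compatibility with the fpqc topology, and to prove fully faithfulness directly via the natural zigzag relating an $\E$-ring to its $\pi_0$.

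Denote by $\iota:\CAlg^\heart\subseteq\CAlg$ the inclusion of ordinary commutative rings into $\E$-rings. On presheaves, precomposition with $\pi_0:\CAlg\to\CAlg^\heart$ gives a functor $\pi_0^*:\Fun(\CAlg^\heart,\mS)\to\Fun(\CAlg,\mS)$, $Y\mapsto Y\circ\pi_0$, which is a right adjoint with left adjoint the left Kan extension $\pi_{0,!}$. To verify that $\pi_0^*$ preserves fpqc sheaves, one performs a routine check: for a faithfully flat map $A\to B$ in $\CAlg$ the induced $\pi_0(A)\to\pi_0(B)$ is faithfully flat in $\CAlg^\heart$, and by flatness $\pi_0(B^{\otimes_A \bullet + 1})\simeq\pi_0(B)^{\otimes_{\pi_0(A)} \bullet + 1}$, so the \v{C}ech totalization of $Y\circ\pi_0$ over the cover $A\to B$ agrees with that of $Y$ over $\pi_0(A)\to\pi_0(B)$. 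The adjunction thus descends to sheaves: the left adjoint $(-)^\heart$ is obtained by sheafifying $\pi_{0,!}$, and on representables it recovers the expected formula $\Spec(A)^\heart\simeq\Spec(\pi_0(A))$.

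For fully faithfulness of $(-)\circ\pi_0$, take $Y, Y'\in\Shv^\heart_\mathrm{fpqc}$ and consider a natural transformation $\alpha:Y\circ\pi_0\to Y'\circ\pi_0$. The key observation is that for any $\E$-ring $A$, the natural zigzag
$$
A \xleftarrow{} \tau_{\ge 0}(A) \xrightarrow{} \iota(\pi_0(A)),
$$
given by the connective cover and the Postnikov $0$-truncation, consists of morphisms sent by $\pi_0$ to identities. Applying naturality of $\alpha$ to each arrow yields $\alpha_A\simeq\alpha_{\tau_{\ge 0}(A)}\simeq\alpha_{\iota(\pi_0(A))}$; writing $\beta:=\alpha|_{\CAlg^\heart}:Y\to Y'$ for the restriction, we thus obtain $\alpha_A\simeq\beta_{\pi_0(A)}$, so $\alpha$ is determined by $\beta$. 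Conversely, any natural transformation $\beta:Y\to Y'$ extends to $\beta\circ\pi_0:Y\circ\pi_0\to Y'\circ\pi_0$ by precomposition. This proves the equivalence $\Map(Y\circ\pi_0,Y'\circ\pi_0)\simeq\Map(Y,Y')$, giving the fully faithfulness.

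The main subtle point lies in the $\i$-categorical implementation of the last argument: one must ensure the zigzag $A\leftarrow\tau_{\ge 0}(A)\to\iota(\pi_0(A))$ is natural in $A$ up to coherent higher homotopy. This follows from the functoriality of the connective cover and of the Postnikov truncation as endo-$\i$-functors of $\CAlg$.
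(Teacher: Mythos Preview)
Your argument is correct and arrives at the same conclusion, but the route differs from the paper's. The paper factors the underlying-ordinary-stack functor as the composite $\Shv_\mathrm{fpqc}^\mathrm{nc}\xrightarrow{\tau_{\ge 0}}\Shv_\mathrm{fpqc}\xrightarrow{(-)^\heart}\Shv_\mathrm{fpqc}^\heart$ and produces a fully faithful right adjoint to each factor separately, citing its predecessor paper; the point is that $\tau_{\ge 0}:\CAlg\to\CAlg^\mathrm{cn}$ and $\pi_0:\CAlg^\mathrm{cn}\to\CAlg^\heart$ are each genuine adjoints to the evident inclusions, so full faithfulness of precomposition with them is automatic. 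You instead treat the composite $\pi_0:\CAlg\to\CAlg^\heart$ directly---which, as the paper emphasizes, is \emph{not} itself an adjoint in the non-connective setting---and recover full faithfulness via the zigzag $A\leftarrow\tau_{\ge 0}(A)\to\iota\pi_0(A)$. That zigzag is exactly the two-step factorization in disguise: as a span of natural transformations of endofunctors of $\CAlg$, it induces a span $\mathrm{id}\leftarrow(\tau_{\ge 0})^*\to\pi_0^*\iota^*$ of endofunctors of $\Fun(\CAlg,\mS)$, which on the essential image of $\pi_0^*$ becomes a natural equivalence $\mathrm{id}\simeq\pi_0^*\iota^*$; combined with $\iota^*\pi_0^*\simeq\mathrm{id}$ this gives the result. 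Your approach is more self-contained and includes the explicit \v{C}ech-nerve check that $\pi_0^*$ preserves fpqc sheaves; the paper's is more modular and offloads both the sheaf condition and full faithfulness to previously established facts about each stage. One small point worth stating more explicitly: your identification of the sheafified left Kan extension $\pi_{0,!}$ with the paper's $(-)^\heart$ rests on both being colimit-preserving and agreeing on affines, which is true but only alluded to in your final clause about representables.
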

 
 \begin{proof}
 By definition, the functor $X\mapsto X^\heart$ on non-connective stacks factors as
 $$
 \Shv_\mathrm{fpqc}^\mathrm{nc}\xrightarrow{\tau_{\ge 0}}\Shv_\mathrm{fpqc}\xrightarrow{(-)^\heart} \Shv_\mathrm{fpqc}^\heart.
 $$
  To find the right adjoint, and to verify that it is fully faithful, it therefore suffices to do so for each of these functors separately.

Using the notation and results from the proof of  \cite[Proposition 1.2.1]{ChromaticCartoon}, the first of these functors $\tau_{\ge 0} :\Shv^\mathrm{nc}_\mathrm{fpqc}\xrightarrow{\tau_{\ge 0}}\Shv_\mathrm{fpqc}$ is defined to be the left adjoint of the composition functor $\tau_{\ge 0}^* : \Shv_\mathrm{fpqc}\to\Shv_\mathrm{fpqc}^\mathrm{nc},$ which sends $X:\CAlg^\mathrm{cn}\to\mS$ to
 $$
 \CAlg\xrightarrow{\tau_{\ge 0}}\CAlg^\mathrm{cn} \xrightarrow{X}\mS.
 $$
 It is shown in the proof of \cite[Proposition 1.2.1]{ChromaticCartoon} that this functor $\tau_{\ge 0}^*$ is indeed fully faithful.
 
 The second functor $(-)^\heart :\Shv_\mathrm{fpqc}\to\Shv_\mathrm{fpqc}^\heart$ is on the other hand defined to be the left adjoint of the functor $(\pi_0)^* : \Shv_\mathrm{fpqc}^\heart\to\Shv_\mathrm{fpqc}$, which sends $X:\CAlg^\heart\to\mS$ to
 $$
 \CAlg^\mathrm{cn}\xrightarrow{\pi_0}\CAlg^\heart\xrightarrow{X}\mS,
 $$
 and is also fully faithful by the proof of  \cite[Proposition 1.2.1]{ChromaticCartoon}
 In summation, we find that the underlying functor functor $X\mapsto X^\heart$ indeed admits a fully faithful right adjoint given by $(X\circ \pi_0)(A) := X(\pi_0(\tau_{\ge 0}(A))) = X(\pi_0(A))$ for any $\E$-ring $A$. 
 \end{proof}

\begin{definition}
 For any $X\in \Shv_\mathrm{fpqc}^\mathrm{nc}$, we call the unit $X\to X^\heart\circ\pi_0$ of the adjunction furnished by Proposition \ref{compressive adjunction} the \textit{compression of $X$}.
\end{definition}
 
 \begin{remark}
 For a non-connective spectral stack $X$ and  any $\E$-ring $A$, compression gives a canonical map
$$
X(A)\to X^\heart(\pi_0(A))
$$
in the $\i$-category of spaces, sending a map $\Spec(A)\to X$ in $\Shv^\mathrm{nc}_\mathrm{fpqc}$ to its its image $\Spec(\pi_0(A))\to X^\heart$ in $\Shv_\mathrm{fpqc}^\heart$ under the functor $(-)^\heart$. Informally, compression encodes the data encoded in $X$ which is detectable on the underlying set-theoretic level. It serves in certain situations as an analogue of the underlying (Zariski) topological space of a scheme.
 \end{remark}
 
 \begin{remark}
 Let $\M$ be the non-connective spectral stack of oriented formal groups of \cite[Section 2]{ChromaticCartoon}. We know by \cite[Corollary 2.3.9]{ChromaticCartoon} that its underlying ordinary stack is the ordinary stack of formal groups $\mathcal M^\heart_\mathrm{FG}$.
 The compression map $\M\to  \mathcal M_\mathrm{FG}^\heart\circ \pi_0$ then amounts to, for any complex periodic $\E$-ring $A$, sending its Quillen formal group  $\w{\G}{}^{\CMcal Q}_A\simeq \Spf(C^*(\mathbf{CP}^\i; A))$ to the classical Quillen formal group  $\w{\G}{}^{\CMcal Q_0}_A\simeq \Spf(A^*(\mathbf{CP}^\i)).$

 \end{remark}
 
 Though the concept of compression may seem strange and unfamiliar, it actually implicitly features in a number constructions in connective spectral algebraic geometry.

\begin{lemma}\label{compressing open subschemes}
Let  $X$ be a non-connective spectral scheme, and $U\subseteq X$ an open subscheme. The canonical commutative diagram of non-connective spectral stacks induced by compression
$$
\begin{tikzcd}
U\ar{r} \ar{d} & U^\heart\circ\pi_0\ar{d}\\
X\ar{r} & X^\heart\circ\pi_0
\end{tikzcd}
$$
is a pullback square.
\end{lemma}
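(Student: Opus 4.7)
The plan is to verify, for every $\E$-ring $A$, that the natural map of spaces
$$
U(A) \longrightarrow X(A) \times_{X^\heart(\pi_0 A)} U^\heart(\pi_0 A)
$$
is an equivalence. Since $U \to X$ and $U^\heart \to X^\heart$ are open immersions and hence monomorphisms, both $U(A) \hookrightarrow X(A)$ and $U^\heart(\pi_0 A) \hookrightarrow X^\heart(\pi_0 A)$ realize their source as a union of connected components in the target. Consequently the map in question is itself automatically a monomorphism, and it suffices to check a bijection on connected components: that a map $f \colon \Spec(A) \to X$ factors through $U$ if and only if its compression $f^\heart \colon \Spec(\pi_0 A) \to X^\heart$ factors through $U^\heart$.

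One direction is immediate from the naturality of compression. For the converse, I would work Zariski-locally on $X$ to reduce to the affine case $X = \Spec(R)$. Then the open subscheme $U \subseteq \Spec(R)$ can be written, by quasi-compactness, as $U = \bigcup_i \Spec(R[f_i^{-1}])$ for a finite collection $f_i \in \pi_0(R)$, and a map $R \to A$ in $\CAlg$ factors through $U$ precisely when the images of the $f_i$ generate the unit ideal of $\pi_0 A$. This condition is manifestly intrinsic to the ordinary ring homomorphism $\pi_0(R) \to \pi_0(A)$ induced by $f$, and is therefore equivalent to $f^\heart$ factoring through the corresponding open subscheme $U^\heart = \bigcup_i \Spec(\pi_0(R)[f_i^{-1}])$ of $\Spec(\pi_0 R) = X^\heart$.

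The main obstacle is the bookkeeping required to carry out the Zariski-local reduction in the non-connective functor-of-points setting: one must know that open immersions pull back to open immersions along affine charts, that the compression construction is compatible with such pullbacks, and that the property of a commutative square of non-connective stacks being a pullback is Zariski-local on the base. Once these formalities are dispatched, the essential content of the lemma reduces to the elementary algebraic observation that an element $f \in \pi_0(R)$ acts invertibly on $A$ if and only if it acts invertibly on $\pi_0(A)$, so that distinguished open immersions are detected entirely on the underlying ordinary commutative rings.
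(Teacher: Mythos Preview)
Your argument is essentially correct and arrives at the same conclusion, but the route differs from the paper's. The paper does not perform a Zariski-local reduction to the affine case at all; instead it invokes the equivalence between the functor-of-points and locally-ringed-space descriptions of non-connective spectral schemes from \cite[Section 1.1]{SAG} (specifically \cite[Corollary 1.1.6.2]{SAG} and \cite[Proposition 1.6.3.1]{SAG}). From that viewpoint, a non-connective spectral scheme $X$ has the same underlying topological space as its ordinary truncation $X^\heart$, and an open subscheme is specified purely by an open subset of that space. The factoring condition for $\Spec(A)\to X$ through $U$ is then manifestly a condition on the underlying continuous map $|\Spec(\pi_0 A)|\to |X^\heart|$, which is exactly the condition that $f^\heart$ factor through $U^\heart$. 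This sidesteps all of the Zariski bookkeeping you flag as the main obstacle. Your approach has the virtue of being more self-contained within the functor-of-points formalism, at the cost of that bookkeeping; the paper's approach is shorter but leans on the comparison theorem with the ringed-space picture.

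One small slip: when you write $U=\bigcup_i \Spec(R[f_i^{-1}])$ ``by quasi-compactness,'' note that an arbitrary open subscheme of $\Spec(R)$ need not be quasi-compact, so the union may be infinite. This does not damage your argument---the equivalence ``$\Spec(A)\to\Spec(R)$ factors through $U$ if and only if the images of the $f_i$ generate the unit ideal of $\pi_0(A)$'' holds for any (possibly infinite) family of $f_i$ covering $U$, and that condition is still visibly one on $\pi_0$---but the word ``finite'' should be dropped.
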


\begin{proof}
For any $\E$-ring $A$, the map of spaces $(X\times_{(X^\heart\circ \pi_0)}(U^\heart\circ\pi_0))(A)\to X(A)$ is the inclusion of those connected components of $X(A)$ for which the associated $\pi_0(A)$-point of the underlying ordinary scheme $X^\heart$ belongs to the open subscheme $U^\heart$. It follows from the theory of spectral schemes laid out in \cite[Section 1.1]{SAG}, specicically from \cite[Corollary 1.1.6.2]{SAG} (though see also \cite[Proposition 1.6.3.1]{SAG} for a comparison with the functor of points approach) that non-connective spectral schemes may be viewed as spectrally ringed topological spaces, with the underlying ringed topological spaces being the Zariski topological spaces of the underlying ordinary schemes. From that perspective, an open subscheme is determined purely on the topological space, and hence underlying ordinary scheme, level. The same description as we gave above thus characterizes the inclusion $U(A)\to X(A)$, and so the canonical map $U(A)\to (X\times_{(X^\heart\circ \pi_0)}(U^\heart\circ\pi_0))(A)$ is indeed a homotopy equivalence.
\end{proof}

The preceding Lemma shows that in non-connective spectral schemes, open immersions are determined via compression on the level of underlying ordinary schemes. For more general non-connective spectral stacks, we may turn this around into a ``decompression" mechanism for producing open immersions.

\begin{prop}\label{Decompressing opens}
Let $X$ be a non-connective spectral stack, with underlying ordinary stack $X^\heart$. Let $U^\heart\to X^\heart$ be an open immersion of ordinary stacks. Then the upper horizontal arrow in the pullback diagram of non-connective spectral stacks
$$
\begin{tikzcd}
U\ar{r} \ar{d} & U^\heart\circ\pi_0\ar{d}\\
X\ar{r} & X^\heart\circ\pi_0
\end{tikzcd}
$$
is also an open immersion. The map $U\to U^\heart\circ \pi_0$ exhibits an equivalence between the underlying ordinary stack of $U$ and the ordinary stack $U^\heart$ (justifying the notation).
\end{prop}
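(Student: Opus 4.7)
My plan is to verify the two claims in the statement by reducing to the affine case and invoking Lemma \ref{compressing open subschemes}.

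I would first handle the identification of the underlying ordinary stack, which is the easier half. For any ordinary ring $R \in \CAlg^\heart$, both the map $X(R) \to (X^\heart \circ \pi_0)(R)$ and the map $(U^\heart \circ \pi_0)(R) \to (X^\heart \circ \pi_0)(R)$ are canonically the identity on $X^\heart(R)$ and $U^\heart(R)$ respectively, using $\pi_0(R) = R$ and the fact that $X^\heart$ is by definition the restriction of $X$ to $\CAlg^\heart$. Evaluating the defining pullback square at $R$ thus collapses to a pullback of equivalences, and the upper horizontal arrow restricts on $\CAlg^\heart$ to an equivalence $U|_{\CAlg^\heart} \simeq U^\heart$, justifying the notation and establishing the last sentence of the proposition.

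For the open immersion claim, the strategy is to pass to an affine atlas $\Spec(A) \to X$ and base change the defining diagram. The pullback $V^\heart := U^\heart \times_{X^\heart} \Spec(\pi_0(A)) \hookrightarrow \Spec(\pi_0(A))$ is an open subscheme of the ordinary affine scheme $\Spec(\pi_0(A))$, which by the theory of non-connective spectral schemes \cite[Section 1.1]{SAG} lifts canonically to an open immersion $V \hookrightarrow \Spec(A)$ with $V^\heart$ as its underlying ordinary scheme. Lemma \ref{compressing open subschemes}, applied to this pair, exhibits the compression square of $V \subseteq \Spec(A)$ as a pullback; this has the same shape as the base change to $\Spec(A)$ of the defining square of $U$, and so $U \times_X \Spec(A) \simeq V$ canonically. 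Under this identification, the upper horizontal arrow pulls back to the compression map $V \to V^\heart \circ \pi_0$ of the non-connective spectral open subscheme $V$.

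The main obstacle is to deduce the global open immersion assertion from this affine-local compression identification. I would handle it by showing that open immersions in the functor-of-points non-connective setting are fpqc-local on the target, so it suffices to verify the property after pulling back along affine charts of $U^\heart \circ \pi_0$ (induced from those of $X$ under $X \to X^\heart \circ \pi_0$), and then using the naturality of Lemma \ref{compressing open subschemes} and the uniqueness of the Zariski lift $V^\heart \rightsquigarrow V$ to assemble the individual local pieces $V$ coherently across the atlas. In effect, the upper horizontal arrow $U \to U^\heart \circ \pi_0$ exhibits $U$ fpqc-locally as the compression map of a non-connective spectral open subscheme, which is precisely the sense in which it is an open immersion in the framework of this paper.
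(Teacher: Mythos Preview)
You were tripped up by a typo in the statement: the intended claim is that the \emph{left vertical} arrow $U\to X$ is an open immersion (this is what the paper's own proof establishes, and what is used downstream, e.g.\ in Lemma~\ref{Lemma 1-line}), not the upper horizontal arrow $U\to U^\heart\circ\pi_0$. Your second paragraph in fact already proves the correct claim, and by the same method as the paper: pull back along an arbitrary $\Spec(A)\to X$, identify the fiber product with the open non-connective spectral subscheme $V\subseteq\Spec(A)$ lifting $V^\heart\subseteq\Spec(\pi_0(A))$ via Lemma~\ref{compressing open subschemes}, and conclude. Once you have $U\times_X\Spec(A)\simeq V$ with $V\hookrightarrow\Spec(A)$ an open immersion for every such $A$, you are done by the very definition of open immersion for non-connective spectral stacks. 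Your third paragraph, which tries to show the literal upper horizontal arrow is an open immersion by checking fpqc-locally on $U^\heart\circ\pi_0$, is therefore a detour: it targets the wrong map, and the sketch there is too vague to constitute a proof in any case.

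For the underlying-ordinary-stack claim, your approach differs from the paper's. You argue by direct evaluation on $R\in\CAlg^\heart$, using that $X^\heart$ is the restriction $X|_{\CAlg^\heart}$. This is correct, but it is not quite ``by definition'': the functor $(-)^\heart$ is defined as a left adjoint, and one must unwind the adjunctions $\tau_{\ge 0}\dashv i$ and $\pi_0\dashv j$ on ring categories to see that the induced left adjoints on stacks are given by restriction. The paper instead writes $X\simeq\varinjlim_i\Spec(A_i)$, uses universality of colimits in the $\infty$-topos to write $U\simeq\varinjlim_i U_i$, checks from the affine case that each $U_i^\heart$ is the expected ordinary open subscheme, and then uses that $(-)^\heart$ preserves colimits. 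The paper's route is more robust (and its footnote explicitly warns that $(-)^\heart$ need not preserve limits in the non-connective setting, which is precisely the pitfall your shortcut skirts); your route is shorter once the identification $X^\heart\simeq X|_{\CAlg^\heart}$ is justified.
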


\begin{proof}
Fix an arbitrary $\E$-ring $A$ and map $\Spec(A)\to X$. To prove that $U\to X$ is an open immersion, we need to show that $U':=\Spec(A)\times_{X^\heart} U^\heart$ is representable by a non-connective spectral scheme, and that the map $U'\to\Spec(A)$ is an open immersion.

By the definition of compression, the morphism $\Spec(A)\to X^\heart$ factors through the compression map $\Spec(A)\to\Spec (A)^\heart\circ \pi_0\simeq \Spec(\pi_0(A))\circ\pi_0$. By assumption, the morphism of ordinary stacks $U^\heart\to X^\heart$ is an open immersion, hence its pullback $\Spec(\pi_0(A))\times_{X^\heart} U^\heart\to \Spec (\pi_0(A))$ is also. It follows that $\Spec(\pi_0(A))\times_X^\heart U^\heart\simeq U'^\heart$ is (representable by) an ordinary scheme, and the induced map $U'^\heart\to  \Spec (\pi_0(A))$ is an open subscheme. Now we may invoke Lemma \ref{compressing open subschemes} to find that the pullback square
$$
\begin{tikzcd}
U'\ar{r} \ar{d} &U'^\heart\circ\pi_0 \ar{d}\\
\Spec(A)\ar{r} & \Spec(\pi_0(A))\circ\pi_0
\end{tikzcd}
$$
exhibits $U'$ as an open non-connective spectral subscheme of $\Spec(A)$. This proves that $U\to X$ is an open immersion. 

Next we wish to show that $U\to U^\heart\circ \pi_0$ induces an equivalence upon underlying ordinary stacks.
Suppose that $X\simeq \varinjlim_i \Spec(A_i)$ for some diagram of $\E$-rings $A_i$. Let us denote $U_i:=U\times_X \Spec(A_i)\to \Spec(A_i)$.
Because we are working in $\Shv_\mathrm{fpqc}^\mathrm{nc}$, which is an $\i$-topos, we find that
$$
U\simeq U\times_X X\simeq U\times_X (\varinjlim_i \Spec(A_i))\simeq \varinjlim_i (U\times_X \Spec(A_i)) = \varinjlim_i U_i.
$$
On the other hand, it follows from what we have already proved previously, that $U_i:=U\times_X \Spec(A_i)\to \Spec(A_i)$ is the open non-connective spectral subscheme defined by the ordinary open  subscheme $U_i^\heart := U^\heart \times_{X^\heart }\Spec(\pi_0(A_i))$ for every $i$. In particular\footnote{Note that this is not automatic, because the underlying ordinary stack functor $X\mapsto X^\heart$ does not in general preserve limits in the  non-connective setting!}, as the notation suggests, $U^\heart_i$ is the underlying ordinary subscheme of $U_i$. Thus the underlying ordinary stack of $U$ is
$
(\varinjlim_i U_i)^\heart=\varinjlim_i U_i^\heart.
$
But  because $X^\heart\simeq \varinjlim_i \Spec(\pi_0(A_i)),$ we may perform an analogous argument as above
$$
U^\heart\simeq U^\heart\times_{X^\heart} X^\heart\simeq U^\heart\times_{X^\heart} (\varinjlim_i \Spec(\pi_0(A_i)))\simeq \varinjlim_i (U^\heart\times_{X^\heart} \Spec(\pi_0(A_i))) = \varinjlim_i U_i^\heart.
$$
Due to naturality of all these equivalences, we may conclude that the compression map $U\to U^\heart\circ\pi_0$ indeed exhibits an equivalence upon the underlying ordinary stack $U^\heart.$
\end{proof}

\begin{remark}\label{connective cover and decompression}
Let $c : X\to \tau_{\ge 0}(X)$ be the connective cover of \cite[Proposition 1.2.1]{ChromaticCartoon}. By definition of underlying ordinary stacks, the connective cover map $c$ induces an equivalence $c^\heart :X^\heart\simeq (\tau_{\ge 0}(X))^\heart$ upon them. It follows that we have, in the setting of Proposition \ref{Decompressing opens}, the following diagram of non-connective spectral stacks
$$
\begin{tikzcd}
U\ar{r}{c} \ar{d} & \tau_{\ge 0}(U)\ar{r}\ar{d} & U^\heart\circ\pi_0\ar{d}\\
X\ar{r}{c} & \tau_{\ge 0}(X)\ar{r} & X^\heart\circ\pi_0,
\end{tikzcd}
$$
in which all three squares are pullback squares. Focusing just on the left square, we find a slightly different description of decompression. If we already know that an open ordinary substack $U^\heart\subseteq X^\heart$ induces an open spectral substack $\tau_{\ge 0}(U)\subseteq\tau_{\ge 0}(X)$, then the open non-connective spectral substack $U\subseteq X$ is obtained by pullback along the connective cover map $c:X\to\tau_{\ge 0}(X)$. The reason this is different is that in the connective world, there exists a map $X^\heart\to \tau_{\ge 0}(X)$, which may be viewed as exhibiting the correspondence between the ordinary open substacks $X^\heart$ and the open spectral substacks of $\tau_{\ge 0}(X)$. This eliminates the slightly strange functor $X^\heart\circ \pi_0$ from the discussion of decompression, but at the point of it being a two-step construction of push-pulling along the cospan $X^\heart\to \tau_{\ge 0}(X)\leftarrow X$.
\end{remark}

One could in principle try to perform the decompression construction as in Proposition \ref{Decompressing opens} for any map of ordinary stacks $Y^\heart\to X^\heart$. However, the decompressed stack will in general not inherit any niceness properties of $Y^\heart$, e.g.\ geometricity. The only three cases that we are aware decompression to be viable in, is the open immersion situation of Proposition \ref{Decompressing opens}, the much more involvedly, formal completions in the sense of Definition \ref{completion def}, and the \'etale base-change described in the next Remark.

\begin{remark}\label{remark about etale base-change}
Let $X\to\Spec(A)$ be a map of non-connective spectral stacks, and let $A\to B$ be an \'etale map of $\E$-rings.  We denote the base-change of $X$ along this map by $X\o_A B:= X\times_{\Spec(A)}\Spec(B)$.
The standard result \cite[Theorem 7.5.0.6]{HA} that the map $B\mapsto \pi_0(B)$ induces an equivalence of $\i$-categories $\CAlg_A^{\text{\'et}}\simeq \CAlg_{\pi_0(A)}^{\heart, \text{\'et}}$ implies by a similar argument to the proof of Proposition \ref{Decompressing opens} that all the three squares in the diagram of non-connective spectral stacks
$$
\begin{tikzcd}
X\o_A B\ar{r} \ar{d} &
\ar{r}\ar{d} 
(X^\heart\o_{\pi_0(A)} \pi_0(B))\circ \pi_0
&
\Spec(\pi_0(B))\circ\pi_0 \ar{d}\\
X\ar{r} &
X^\heart\ar{r}\circ \pi_0&
 \Spec(\pi_0(A))\circ\pi_0
\end{tikzcd}
$$
are pullback squares. Consequently, the \'etale base-change on the level of non-connective stacks $X\o_A B$ is obtained by decompression from the \'etale base-change of the underlying ordinary stacks $X^\heart\o_{\pi_0(A)}\pi_0(B)$.
\end{remark}

\subsection{Quasi-coherent sheaves supported on a reduced closed substack} This subsection closely follows \cite[Subsections 7.1.5 \& 7.2.3]{SAG}, working out an analogue of those results in our setting.

\begin{remark}\label{etale decompression}
In what follows, we nonchalantly discuss reduced closed substacks of geometric ordinary stacks. Due to the notion of reducedness really only satisfying smooth (and consequently \'etale and Zariski) descent, some care must be taken here.  We refer to \cite[Section 5.2]{Goerss} for a discussion of reduced closed substacks - note that the notion of algebraic stacks in \textit{loc.~cit.} is a version (restricting functor values to $\mathrm{Grpd}\simeq \mS^{\le 1} \subseteq \mS$) of our notion of a geometric ordinary stack.
\end{remark}

Let $X$ be a non-connective spectral stack. Let $U^\heart\subseteq X^\heart$ be a quasi-compact open substack, with complementary reduced closed substack $K = X^\heart - U^\heart.$ Let $j : U\to X$ be the corresponding open immersion from Proposition \ref{Decompressing opens}. Assume that both $X$ and $U$ are geometric, in the sense of \cite[Definition 1.3.1]{ChromaticCartoon}. These assumptions will remain in force until the end of this subsection.

\begin{definition}
A quasi-coherent sheaf $\sF$ on $X$ is \textit{supported on $K$} if $j^*(\sF) \simeq 0$. Let  $\QCoh_K(X)\subseteq\QCoh(X)$ denote the full subcategory spanned by all quasi-coherent sheaves supported on $K$.
\end{definition}

Because the functor $j^*$ preserves colimits, the subcategory $\QCoh_K(X)\subseteq\QCoh(X)$ is closed under colimits. This colimit-preservation of the subcategory inclusion,  and presentability of the $\i$-categories in question, combine with the Adjoint Functor Theorem to guarantee the existence of a right adjoint $\Gamma_K : \QCoh(X)\to\QCoh_K(X)$.

\begin{remark}
For a quasi-coherent sheaf $\sF\in\QCoh(X)$, the sheaf $\Gamma_K(\sF)$ is to be thought of as the sheaf of those sections of $\sF$ which are supported along $K\subseteq X$. In particular, $\Gamma_K(X; \sF) := \Gamma(X; \Gamma_K(\sF))$ is the of global sections of $\sF$ with support in $K$.
\end{remark}

\begin{remark}\label{remark affine setting for nil}
Let $X=\Spec (A)$ for an $\E$-ring $A$, and $K\subseteq\Spec (\pi_0(A))$ is the closed subscheme determined by a finitely generated ideal $I\subseteq\pi_0(A)$. Then by\footnote{The statement in \textit{loc.~cit.} is restricted to the connective setting, but the proof does not require it. This is true of many parts of \cite[Section 7.1.5]{SAG}, see \cite[Remark 7.1.1.11]{SAG} for a partial explanation.} \cite[Proposition  7.1.5.3]{SAG}, the canonical equivalence of $\i$-categories $\QCoh(X)\simeq\Mod_A$ restricts to an equivalence 
$$\QCoh_K(X)\simeq \Mod_A^{\mathrm{Nil}(I)}$$
with the full subcategory of $A$-modules which are $I$-nilpotent in the sense of \cite[Definition 7.1.1.6 or Example 7.1.1.7]{SAG}. That is to say, an $A$-module $M$ is $I$-nilpotent if and only if for every $a\in I$ and every $x\in\pi_*(M),$ we have $a^n x = 0$ for $n>>0.$
\end{remark}

Our goal in the rest of this subsection is to connect quasi-coherent sheaves supported on $K$ with quasi-coherent sheaves on (the decompression of) its open complement $U$.  For this purpose, we will make use of the theory of  semi-orthogonal decompositions, following \cite[Section 7.2]{SAG}.

\begin{lemma}\label{ff for oim}
The quasi-coherent pushforward functor $j_* : \QCoh(U)\to\QCoh(X)$ is fully faithful.
\end{lemma}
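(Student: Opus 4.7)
The plan is to exhibit the counit $\epsilon: j^* j_* \to \mathrm{id}_{\QCoh(U)}$ as an equivalence, which is equivalent to fully faithfulness of $j_*$.

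First, I would reduce to the affine case. Pick an affine fpqc atlas $p:\Spec(A)\to X$ using geometricity of $X$. Since open immersions are stable under pullback by Proposition \ref{Decompressing opens}, the base change $j_A:U\times_X\Spec(A)\to\Spec(A)$ is itself an open immersion obtained by decompression. By flat descent of quasi-coherent sheaves on the geometric stack $U$, the counit $\epsilon_{\sF}$ is an equivalence for a given $\sF\in\QCoh(U)$ if and only if it is so after pullback to the atlas $\Spec(A)\times_X U$. By flat base change for pushforward along the \'etale map $j$, this pullback is the counit for $j_A$. Hence it suffices to treat the case $X=\Spec(A)$.

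Next, I would exploit the explicit decompression description in the affine case. The ordinary open $U^\heart\subseteq\Spec(\pi_0(A))$ is the complement of $V(I)$ for some finitely generated ideal $I=(\bar a_1,\ldots,\bar a_m)\subseteq\pi_0(A)$, whose generators lift to elements $a_i\in\pi_0(A)$. Then $U$ is covered by the basic affine opens $V_i:=\Spec(A[a_i^{-1}])$, with intersections $V_{i_0\cdots i_k}\simeq\Spec(A[a_{i_0}^{-1},\ldots,a_{i_k}^{-1}])$. \v{C}ech descent then yields
\[
\QCoh(U)\;\simeq\;\Tot\bigl(\Mod_{A[a_{i_0}^{-1},\ldots,a_{i_k}^{-1}]}\bigr),
\]
and both $j^*$ and $j_*$ are computed termwise through this presentation.

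Finally, I would verify the counit on each term. For the affine open immersion $j_i:V_i\to\Spec(A)$, the functor $j_i^*$ is $-\otimes_A A[a_i^{-1}]$ and $j_{i,*}$ is restriction of scalars. For $M\in\Mod_{A[a_i^{-1}]}$,
\[
j_i^*j_{i,*}M\;\simeq\; M\otimes_A A[a_i^{-1}]\;\simeq\; M,
\]
since $A\to A[a_i^{-1}]$ is an epimorphism in $\E$-rings, so $A[a_i^{-1}]\otimes_A A[a_i^{-1}]\simeq A[a_i^{-1}]$. The same applies to the higher intersections, and passage to the totalization yields the desired equivalence. The main obstacle will be cleanly justifying the flat base change for $j_*$ and the \v{C}ech descent in the non-connective setting; these should follow from the fact that open immersions of non-connective spectral stacks are quasi-compact \'etale maps, allowing the standard descent and base change formalism of \cite{SAG} to apply.
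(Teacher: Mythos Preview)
Your approach is workable in spirit but takes a substantially longer route than the paper, and step~3 has a gap as written.

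The paper's proof is essentially one line: since $j:U\to X$ is a monomorphism, the square
\[
\begin{tikzcd}
U\ar{r}{\id}\ar{d}{\id} & U\ar{d}{j}\\
U\ar{r}{j} & X
\end{tikzcd}
\]
is a pullback (obtained by decompressing the corresponding square of ordinary stacks and pulling back along the compression map). A single application of Beck--Chevalley base change then gives $j^*j_*\simeq\id_*\id^*\simeq\id$, so the counit is an equivalence. Geometricity and quasi-compactness are invoked only to ensure the base-change formula applies.

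Your reduction to the affine case in step~1 already uses exactly this base-change mechanism, so by that point you have essentially done the paper's proof. The subsequent \v{C}ech argument is an unnecessary detour, and moreover step~3 does not go through as stated. You verify the counit $j_i^*(j_i)_*\simeq\id$ for each basic open $j_i:V_i\to\Spec(A)$, but this is not the counit for $j:U\to\Spec(A)$. The assertion that $j_*$ is ``computed termwise through this presentation'' is false: the right adjoint to $j^*\simeq\Tot(j_I^*)$ is not the totalization of the $(j_I)_*$, because the pushforwards $(j_I)_*$ do not assemble into a map of cosimplicial $\infty$-categories (they fail to commute with codegeneracies). To actually deduce the counit for $j$ from your local calculations, you would restrict $j^*j_*\sF\to\sF$ to each $V_i$ and invoke base change for the pullback square $V_i\simeq V_i\times_{\Spec(A)}U$; but that is again the Beck--Chevalley step the paper applies directly to $U\simeq U\times_X U$, rendering the reduction to affines and the \v{C}ech cover redundant.
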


\begin{proof}
A right  adjoint functor is fully faithful if and only if the counit of the adjunction is an equivalence. Therefore we must show that for an arbitrary $\sF\in\QCoh(U)$, the canonical map $j^*j_*(\sF)\to \sF$ is an equivalence.
Consider the pullback square
$$
\begin{tikzcd}
U^\heart\ar{r}{\id} \ar{d}{\id} & U^\heart\ar{d}{j^\heart}\\
U^\heart\ar{r}{j^\heart} & X^\heart
\end{tikzcd}
$$
of ordinary stacks. By applying the compressive embedding $(-)\circ\pi_0:\Shv_\mathrm{fpqc}^\heart\to\Shv_\mathrm{fpqc}^\mathrm{nc}$ (which is a right adjoint, and therefore preserves limits, by Proposition \ref{compressive adjunction}) and pulling back along the compression map $X\to X^\heart\circ \pi_0$, we find that
$$
\begin{tikzcd}
U\ar{r}{\id} \ar{d}{\id} & U\ar{d}{j}\\
U\ar{r}{j} & X
\end{tikzcd}
$$
is a pullback square as well. Thanks to the geometricity and quasi-compactness assumptions, we may invoke the Beck-Chevalley base-change formula
$$
j^*j_*(\sF)\simeq  \id^*\id_*(\sF) \simeq \sF
$$
to conclude that $j_*$ is fully faithful.
\end{proof}

We will sometimes use the fully faithful embedding of Lemma \ref{ff for oim} to identify $\QCoh(U)$ with a full subcategory of $\QCoh(X)$. In that case, we might suppress the functor $j_*$ from notation, and denote its adjoint by $j^*(\sF)=\sF\vert_U$. When the fully faithful embedding is suppressed, $\sF\vert_U$ may stand  for $j_*j^*(\sF)$ as well. The following is a direct analogue of \cite[Proposition 7.2.3.1]{SAG} in the context of non-connective geometric spectral stacks.

\begin{prop}\label{semi-orthog decomp}
The quasi-coherent pushforward $j_* : \QCoh(U)\to\QCoh(X)$ exhibits a semi-direct decomposition $(\QCoh_K(X), \QCoh(U))$  of the stable $\i$-category $\QCoh(X)$. In particular, there is a canonical equivalence of $\i$-categories
$$\QCoh(U)\simeq \QCoh_K(X)^\perp.$$
\end{prop}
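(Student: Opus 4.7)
The plan is to verify directly that the pair $(\QCoh_K(X), j_*\QCoh(U))$ satisfies the three defining conditions of a semi-orthogonal decomposition of the stable $\i$-category $\QCoh(X)$ in the sense of \cite[Definition 7.2.0.1]{SAG}: stability of both subcategories, mutual orthogonality in the correct direction, and the existence of a characteristic fiber sequence on every object.

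First I would check stability of the two subcategories. The subcategory $\QCoh_K(X)$ is, by definition, the kernel of the exact colimit-preserving functor $j^*$, so it is in particular a stable subcategory of $\QCoh(X)$ (and indeed, as noted just before the statement, closed under colimits). On the other hand, $j_*$ is exact as a right adjoint between stable $\i$-categories, so its essential image is a stable subcategory of $\QCoh(X)$, and this image is identified with $\QCoh(U)$ by the full faithfulness of $j_*$ established in Lemma \ref{ff for oim}.

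Next I would verify the orthogonality condition. For $\sF \in \QCoh_K(X)$ and $\sG \in \QCoh(U)$, the $(j^*, j_*)$-adjunction produces an equivalence of mapping spectra
\[
\Map_{\QCoh(X)}(\sF, j_*\sG) \simeq \Map_{\QCoh(U)}(j^*\sF, \sG),
\]
and the right-hand side vanishes because $j^*\sF \simeq 0$ by definition of $\QCoh_K(X)$.

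Finally I would exhibit the characteristic fiber sequence. Given an arbitrary $\sF \in \QCoh(X)$, form the fiber of the unit map of the $(j^*, j_*)$-adjunction,
\[
\Gamma_K(\sF) \to \sF \to j_*j^*\sF.
\]
The right-hand term manifestly lies in $j_*\QCoh(U)$, so it remains to check that $\Gamma_K(\sF) \in \QCoh_K(X)$. Applying the exact functor $j^*$ and using that the counit $j^*j_* \to \id$ is an equivalence (this is precisely the content of Lemma \ref{ff for oim}), one finds that the map $j^*\sF \to j^*j_*j^*\sF$ is an equivalence, hence $j^*\Gamma_K(\sF) \simeq 0$, as desired. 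This completes the verification of the semi-orthogonal decomposition. The equivalence $\QCoh(U) \simeq \QCoh_K(X)^\perp$ then follows formally: in any semi-orthogonal decomposition $(\mathcal{A}, \mathcal{B})$ one has $\mathcal{B} \simeq \mathcal{A}^\perp$, and combined with the full faithfulness of $j_*$ this yields the stated equivalence. The only nontrivial input in the entire argument is the counit equivalence $j^*j_* \simeq \id$ coming from Lemma \ref{ff for oim}, which in turn rested on the geometricity and quasi-compactness assumptions via Beck-Chevalley base change; everything else is a direct unwinding of the adjoint formalism.
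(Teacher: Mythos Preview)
Your proof is correct and follows essentially the same approach as the paper's. The only difference is packaging: the paper invokes \cite[Corollary 7.2.1.5]{SAG} (a fully faithful right adjoint with a left adjoint automatically yields a semi-orthogonal decomposition $({}^\perp\QCoh(U),\QCoh(U))$), then identifies ${}^\perp\QCoh(U)$ with $\QCoh_K(X)$ via the same adjunction computation you give, and finally cites \cite[Corollary 7.2.1.8]{SAG} for $\QCoh(U)\simeq({}^\perp\QCoh(U))^\perp$; you instead verify the three axioms of \cite[Definition 7.2.0.1]{SAG} by hand, which amounts to unpacking those two corollaries directly. Both routes rest on exactly the same nontrivial input, namely the full faithfulness of $j_*$ from Lemma \ref{ff for oim}.
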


\begin{proof}
The functor $j_* : \QCoh(U)\to\QCoh(X)$ has a left adjoint in the quasi-coherent pullback $j^*$. Hence, using the characterization of semi-direct decompositions \cite[Corollary 7.2.1.5]{SAG}, it follows from Lemma \ref{ff for oim} that it  induces a semi-direct decomposition $({}^\perp\QCoh(U),\QCoh(U))$ of $\QCoh(X)$.

We wish to identify the left orthogonal as $\QCoh_K(X)\simeq {}^\perp\QCoh(U).$ To do so, note that we have for any
$\sF\in\QCoh(X)$ and $\sF'\in\QCoh(U)$ a homotopy equivalence
$$
\Map_{\QCoh(U)}(j^*(\sF), \sF')\simeq \Map_{\QCoh(X)}(\sF, j_*(\sF')).
$$
Consequently $\sF\in\QCoh_K(X)$ if and only if $\sF\in{}^\perp\QCoh(U)$.

The final piece of the puzzle is \cite[Corollary 7.2.1.8]{SAG}, which shows that the canonical inclusion $\QCoh(U)\subseteq ({}^\perp\QCoh(U))^\perp$ is in fact an equivalence of $\i$-categories.
\end{proof}

\begin{corollary}\label{corollary semi-orthogonal sequence}
For any quasi-coherent sheaf $\sF$ on $X$, the counit and unit of the relevant adjunctions form a cofiber sequence
$$
\Gamma_K(\sF)\to\sF\to \sF\vert_U
$$
in the stable $\i$-category $\QCoh(X)$.
\end{corollary}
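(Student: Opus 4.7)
The plan is to deduce this as a purely formal consequence of the semi-orthogonal decomposition $(\QCoh_K(X), \QCoh(U))$ of $\QCoh(X)$ established in Proposition \ref{semi-orthog decomp}. Recall that a semi-orthogonal decomposition of a stable $\infty$-category canonically produces, for every object, a cofiber sequence whose first term lies in the left piece and third term lies in the right piece, with the maps being the counit and unit of the appropriate adjunctions (compare \cite[Remark 7.2.0.2]{SAG} and \cite[Proposition 7.2.1.4]{SAG}).

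Concretely, I would proceed as follows. First, I would unwind the two adjunctions in play. The inclusion $\QCoh_K(X) \hookrightarrow \QCoh(X)$ is colimit-preserving, so by the Adjoint Functor Theorem it admits the right adjoint $\Gamma_K$; the associated counit is the map $\Gamma_K(\sF) \to \sF$. The fully faithful embedding $j_* : \QCoh(U) \hookrightarrow \QCoh(X)$ from Lemma \ref{ff for oim} admits the left adjoint $j^*$; its unit is the map $\sF \to j_* j^*(\sF) = \sF|_U$. Composing the counit with the unit gives a canonical two-step sequence $\Gamma_K(\sF) \to \sF \to \sF|_U$.

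Next, I would invoke the fact that $\QCoh(X)$ is stable and that $(\QCoh_K(X), \QCoh(U))$ is a semi-orthogonal decomposition by Proposition \ref{semi-orthog decomp}. In this situation one has the structural fact that for any object $\sF$, the cofiber of the counit $\Gamma_K(\sF) \to \sF$ lies automatically in $\QCoh(U) = \QCoh_K(X)^\perp$, and moreover coincides with the universal map from $\sF$ into $\QCoh(U)$; dually, the fiber of $\sF \to \sF|_U$ lies in $\QCoh_K(X)$ and computes $\Gamma_K(\sF)$. The main work is just verifying these two reformulations of the semi-orthogonal decomposition, which is an easy diagram chase: given the cofiber $C := \cofib(\Gamma_K(\sF) \to \sF)$, the long exact sequence of mapping spaces against any $\sG \in \QCoh_K(X)$ shows $\Map(C, \sG) \simeq 0$ using that $\Gamma_K(\sF) \to \sF$ induces an equivalence on $\Map(\sG, -)$ (as $\sG$ lies in the image of the right adjoint to $\Gamma_K$), so $C \in \QCoh_K(X)^\perp = \QCoh(U)$.

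The only potentially subtle step is identifying the canonical map $C \to \sF|_U$ produced from the semi-orthogonal decomposition with the unit map $\sF \to \sF|_U$ composed with the quotient $\sF \to C$ — but this is immediate from the universal property of the unit, since any morphism from $\sF$ to an object in the image of $j_*$ factors uniquely through $\sF|_U$. Thus the sequence $\Gamma_K(\sF) \to \sF \to \sF|_U$ is indeed a cofiber sequence in $\QCoh(X)$, as desired. I do not anticipate a genuine obstacle here; the result is a direct unpacking of the formalism of semi-orthogonal decompositions in a stable $\infty$-category applied to the decomposition already at hand.
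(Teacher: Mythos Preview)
Your proposal is correct and follows exactly the approach the paper intends: the corollary is stated without proof, as an immediate formal consequence of the semi-orthogonal decomposition of Proposition \ref{semi-orthog decomp} (cf.\ \cite[Remark 7.2.0.2]{SAG}). One small slip: where you write ``$\Map(C,\sG)\simeq 0$'' you mean $\Map(\sG,C)\simeq 0$, since you are showing $C\in\QCoh_K(X)^\perp$; your surrounding justification (that $\Gamma_K(\sF)\to\sF$ is an equivalence on $\Map(\sG,-)$) already has the orientation right.
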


\begin{remark}\label{semi-orth loc-nilp}
Let us specialize again to the case of   Remark \ref{remark affine setting for nil}, i.e.~when $X=\Spec(A)$, $K$ is cut out by a (reduction of a) finitely generated ideal $I\subseteq\pi_0(A)$, and $U$ is the open complement of $K$.
In \cite[Definition 7.2.4.1]{SAG}, the full   subcategory of $I$-local modules $\Mod_A^{\mathrm{Loc}(I)}\subseteq\Mod_A$ is defined as the right orthogonal $(\Mod_A^{\mathrm{Nil}(I)})^\perp$. It therefore follows from Lemma \ref{ff for oim} that the equivalence of $\i$-categories $\QCoh(X)\simeq \Mod_A$, given by global sections $\sF\mapsto \Gamma(X; \sF)$, restricts to an equivalence of subcategories
$$
\QCoh(U)\simeq \Mod_A^{\mathrm{Loc}(I)}.
$$
Under this, the semi-orthogonal decomposition of Proposition \ref{semi-orthog decomp} identifies with the nilpotent-local semi-orthogonal decomposition $(\Mod_A^{\mathrm{Nil}(I)}, \Mod_A^{\mathrm{Loc}(I)})$ of the $\i$-category $\Mod_A$, see \cite[Proposition 7.2.4.4]{SAG}.
The functor $j_*j^*$ is then the localization functor $L_I :\Mod_A\to \Mod_A^{\mathrm{Loc}(I)},$ and the cofiber sequence of Corollary \ref{corollary semi-orthogonal sequence} takes the form
$$
\Gamma_I(M)\to M\to L_I(M)
$$
for any $A$-module $M$.
\end{remark}

The cofiber sequence of Corollary \ref{corollary semi-orthogonal sequence} is very useful for proving the compatibility of the functor $\Gamma_K$ of supported sections with various quasi-coherent pullbacks and pushforwards.

\begin{prop}\label{support and qc pullback}
Let $f:X'\to X$ be a morphism of geometric non-connective spectral stacks, and denote $K' := (X')^\heart\times_{X^\heart} K$. For any quasi-coherent sheaf $\sF\in\QCoh(X)$, there is a canonical equivalence $f^*(\Gamma_K(\sF))\simeq\Gamma_{K'}(f^*(\sF)).$
\end{prop}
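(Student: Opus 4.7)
The plan is to deduce the equivalence from the cofiber sequence of Corollary~\ref{corollary semi-orthogonal sequence} by base-change along $f$. Let $j:U\to X$ be the open immersion corresponding to $U^\heart = X^\heart - K$, and form the pullback square
$$
\begin{tikzcd}
U'\ar{r}{j'}\ar{d}{f'} & X'\ar{d}{f}\\
U\ar{r}{j} & X.
\end{tikzcd}
$$
By Proposition~\ref{Decompressing opens}, $j'$ is an open immersion whose underlying ordinary open substack is $U'^\heart \simeq U^\heart\times_{X^\heart} (X')^\heart$; its reduced closed complement is thus $K'$ by definition, so Corollary~\ref{corollary semi-orthogonal sequence} applied to $f^*(\sF)$ on $X'$ furnishes the cofiber sequence $\Gamma_{K'}(f^*\sF)\to f^*\sF\to j'_*j'^*(f^*\sF)$.

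First I would apply the exact functor $f^*$ to the cofiber sequence $\Gamma_K(\sF)\to\sF\to j_*j^*\sF$ to obtain a cofiber sequence
$$
f^*\Gamma_K(\sF)\to f^*\sF\to f^*j_*j^*\sF.
$$
Next, I would invoke the Beck-Chevalley base-change equivalence $f^*j_*\simeq j'_*f'^*$, exactly as in the proof of Lemma~\ref{ff for oim} (the standing geometricity and quasi-compactness hypotheses ensure its applicability, and they are inherited by the pullback). Combined with the canonical equivalence $f'^*j^*\simeq j'^*f^*$ from the pullback square, this identifies $f^*j_*j^*\sF\simeq j'_*j'^*f^*\sF$, and by naturality of the unit of $j'^*\dashv j'_*$ the map $f^*\sF\to f^*j_*j^*\sF$ is carried to the unit map $f^*\sF\to j'_*j'^*f^*\sF$ of the analogous adjunction on $X'$.

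To conclude, it remains to verify that $f^*\Gamma_K(\sF)$ lies in $\QCoh_{K'}(X')$; granting this, both $f^*\Gamma_K(\sF)$ and $\Gamma_{K'}(f^*\sF)$ are canonically the fiber of the very same map $f^*\sF\to j'_*j'^*f^*\sF$, yielding the desired equivalence. The support condition follows from the chain
$$
j'^*f^*\Gamma_K(\sF)\simeq f'^*j^*\Gamma_K(\sF)\simeq f'^*(0)\simeq 0,
$$
using the defining property $j^*\Gamma_K(\sF)\simeq 0$ of $\Gamma_K(\sF)\in\QCoh_K(X)$.

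The main technical input is the base-change equivalence $f^*j_*\simeq j'_*f'^*$ for the open immersion $j$, which in the non-connective setting is the one genuinely non-formal ingredient; once invoked (as in Lemma~\ref{ff for oim}), everything else is a routine manipulation of the semi-orthogonal decomposition of Proposition~\ref{semi-orthog decomp} and its characterization via the cofiber sequence of Corollary~\ref{corollary semi-orthogonal sequence}.
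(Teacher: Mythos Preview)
Your proof is correct and follows essentially the same route as the paper's own argument: form the pullback square, apply $f^*$ to the cofiber sequence of Corollary~\ref{corollary semi-orthogonal sequence}, invoke the Beck-Chevalley base-change equivalence $f^*j_*\simeq j'_*f'^*$ for the open immersion $j$, and then compare fibers after identifying the resulting map with the unit of $(j')^*\dashv (j')_*$. Your additional explicit verification that $f^*\Gamma_K(\sF)$ is supported on $K'$ is a harmless (and arguably clarifying) extra step that the paper leaves implicit in its phrase ``comparing the two fiber sequences.''
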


\begin{proof}Consider the pullback square of non-connective spectral stacks
$$
\begin{tikzcd}
U'\ar{r}{j'} \ar{d}{f'} & X'\ar{d}{f}\\
U\ar{r}{j} & X.
\end{tikzcd}
$$
The map $j':U'\to X'$ is an open immersion, corresponding on underlying ordinary stacks to the open complement of the reduced closed substack $K'\subseteq (X')^\heart.$ By the base-change formula, applicable in this situation by a variant of \cite[Proposition 6.3.4.1]{SAG}, the canonical map $f^*j_*\to (j')_* (f')^* $ is an equivalence. Using this, and commutativity of the above diagram, we obtain an equivalence
$$
f^*j_*j^*(\sF)\simeq  (j')_* (f')^*j^*(\sF)\simeq  (j')_*(j')^*f^*(\sF)
$$
in $\QCoh(X')$. Taking quasi-coherent pullback along $f$ of the fiber sequence of Corollary \ref{corollary semi-orthogonal sequence}, produces a fiber sequence
$$
f^*(\Gamma_K(\sF))\to f^*(\sF)\to f^*j_*j^*(\sF)
$$
in the stable $\i$-category $\QCoh(X')$.  The right-most term of this sequence may by the above be identified with $ (j')_*(j')^*f^*(\sF)$, and by virtue of naturality of everything so far,  the second morphism of the above fiber sequence can be seen to come from the unit natural transformation $\id\to (j')_* (j')^*$. That is also the second morphism in the fiber sequence
$$
\Gamma_{K'}(f^*(\sF))\to f^*(\sF)\to  (j')_*(j')^*(f^*(\sF)),
$$
obtained by another application of  Corollary \ref{corollary semi-orthogonal sequence}. Comparing the two fiber sequences thus gives rise to the desired equivalence.
\end{proof}

\begin{prop}\label{support and connective cover}
Let $c : X\to \tau_{\ge 0}(X)$ be the connective cover of \cite[Proposition 1.2.1]{ChromaticCartoon}. The canonical map $\Gamma_K(c_*(\sF))\to c_*(\Gamma_K(\sF))$ is an equivalence for any quasi-coherent sheaf $\sF\in\QCoh(X)$.
\end{prop}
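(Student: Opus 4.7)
The plan is to mimic the argument used in the proof of Proposition \ref{support and qc pullback}, replacing the morphism $f$ of geometric non-connective spectral stacks with the connective cover map $c : X\to \tau_{\ge 0}(X)$. The strategy is to produce two natural cofiber sequences in $\QCoh(\tau_{\ge 0}(X))$ whose middle terms are canonically identified, whose final terms are canonically identified, and whose first morphisms are compatible; the equivalence $\Gamma_K(c_*\sF)\simeq c_*\Gamma_K(\sF)$ then follows from the resulting equivalence of fibers.

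First, I would set up the pullback square furnished by Remark \ref{connective cover and decompression}:
$$
\begin{tikzcd}
U\ar{r}{c_U} \ar{d}{j} & \tau_{\ge 0}(U)\ar{d}{j_+}\\
X\ar{r}{c} & \tau_{\ge 0}(X),
\end{tikzcd}
$$
where $j_+$ denotes the open immersion of $\tau_{\ge 0}(U)$ into $\tau_{\ge 0}(X)$ obtained by decompression from the same open $U^\heart\subseteq X^\heart$. Since the underlying ordinary stacks of $X$ and $\tau_{\ge 0}(X)$ coincide, the reduced closed substack $K\subseteq X^\heart$ is likewise the reduced closed complement of $\tau_{\ge 0}(U)\subseteq \tau_{\ge 0}(X)$, so the functor $\Gamma_K$ from the preceding subsection makes sense on both $\QCoh(X)$ and $\QCoh(\tau_{\ge 0}(X))$.

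Next, I would apply $c_*$ to the cofiber sequence $\Gamma_K(\sF)\to \sF\to j_*j^*\sF$ of Corollary \ref{corollary semi-orthogonal sequence}, producing
$$
c_*\Gamma_K(\sF)\to c_*\sF\to c_*j_*j^*\sF,
$$
and compare it to the cofiber sequence of Corollary \ref{corollary semi-orthogonal sequence} applied to $c_*\sF\in\QCoh(\tau_{\ge 0}(X))$:
$$
\Gamma_K(c_*\sF)\to c_*\sF\to (j_+)_*(j_+)^*(c_*\sF).
$$
Commuting pushforwards around the above pullback square yields $c_*j_*j^*\sF\simeq (j_+)_*(c_U)_*j^*\sF$, and the base-change formula along the open immersion $j_+$, in the form $(j_+)^*c_*\simeq (c_U)_*j^*$, identifies this with $(j_+)_*(j_+)^*c_*\sF$. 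Naturality of all the (co)units involved ensures that the second maps in both cofiber sequences agree under this identification, and comparing fibers gives the desired equivalence.

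The main obstacle is justifying the base-change equivalence $(j_+)^*c_*\simeq (c_U)_*j^*$ for the connective cover square, which is not directly covered by the variant of \cite[Proposition 6.3.4.1]{SAG} invoked in the proof of Proposition \ref{support and qc pullback}, since $c$ is not a morphism of geometric non-connective spectral stacks in the usual sense. I would handle this by exploiting the fact that $j_+$ is an open immersion, hence affine-locally determined, so that the verification reduces to the case $X=\Spec(A)$ with $U\subseteq X$ a quasi-compact open subscheme; there the identity $(j_+)^*c_*\simeq (c_U)_*j^*$ can be checked directly from the explicit descriptions of $U$ and $\tau_{\ge 0}(U)$ as open subfunctors of $\Spec(A)$ and $\Spec(\tau_{\ge 0}(A))$, together with flatness of the open-immersion pullback.
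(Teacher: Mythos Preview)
Your proposal is correct and follows essentially the same approach as the paper's own proof: use the pullback square from Remark \ref{connective cover and decompression}, apply Corollary \ref{corollary semi-orthogonal sequence} on both sides, and identify the right-hand terms via commutativity of the square together with the base-change equivalence $(j_+)^*c_*\simeq (c_U)_*j^*$. The paper's proof simply invokes the base-change formula without further comment, whereas you take extra care to note that it is not literally an instance of the result cited earlier and sketch an affine-local verification; this extra care is reasonable but not strictly needed, since base-change along an open immersion is unproblematic here.
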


\begin{proof}
Recall from Remark \ref{connective cover and decompression} that connective covers induce a pullback square
$$
\begin{tikzcd}
U\ar{r}{c'} \ar{d}{j} & \tau_{\ge 0}(U)\ar{d}{\tau_{\ge 0}} \\
X\ar{r}{c} & \tau_{\ge 0}(X).
\end{tikzcd}
$$
The desired identification between supported sections now follows just like in the proof of Proposition \ref{support and qc pullback}, from the cofiber sequences of Corollary \ref{corollary semi-orthogonal sequence} and the natural equivalences
$$
c_* j_*j^*(\sF)\simeq \tau_{\ge 0}(j)_*(c')_*j^*(\sF)\simeq \tau_{\ge0}(j)_*\tau_{\ge 0}(j)^*c_*(\sF),
$$
obtained from the above pullback square by using its commutativity and the base-change formula respectively.
\end{proof}
 
\begin{remark}
 By \cite[Proposition 1.5.5]{ChromaticCartoon}, pushforward along $c:X\to\tau_{\ge 0}(X)$ loses no information, so long as we also keep track of the $c_*(\sO_X)$-module structure. This allows us to view $c_* :\QCoh(X)\to \QCoh(\tau_{\ge 0}(X))$ as a forgetful functor, in which case Proposition \ref{support and connective cover} asserts that supported sections are insensitive to non-connective enhancements $X$ of a spectral stack $\tau_{\ge 0}(X)$.
 \end{remark}
 
\begin{prop}\label{support and truncation}
Let $t : \tau_{\ge 0}(X)\to X^\heart$ be the truncation map of \cite[Proposition 1.2.1]{ChromaticCartoon}. The canonical map $\Gamma_K(t_*(\sF))\simeq t_*(\Gamma_K(\sF))$ is an equivalence for any quasi-coherent sheaf $\sF\in\QCoh(X^\heart)$.
\end{prop}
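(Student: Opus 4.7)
The proof follows the template of Proposition \ref{support and connective cover}. The plan is to produce a pullback square compatible with $t$ and the open immersion $j$, and then apply Beck--Chevalley base change in conjunction with the cofiber sequence of Corollary \ref{corollary semi-orthogonal sequence}. In keeping with the parallel to Proposition \ref{support and connective cover}, I will carry out the argument for a quasi-coherent sheaf on the source of $t$.

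First I would establish the pullback square of non-connective spectral stacks
\[
\begin{tikzcd}
\tau_{\ge 0}(U) \ar{r}{j'} \ar{d}{t_U} & \tau_{\ge 0}(X) \ar{d}{t} \\
U^\heart \ar{r}{j^\heart} & X^\heart,
\end{tikzcd}
\]
where $t_U$ denotes the corresponding truncation map for $U$. This is essentially the content of the decompression construction for open immersions from Proposition \ref{Decompressing opens} and Remark \ref{connective cover and decompression}: the open substack $\tau_{\ge 0}(U)\subseteq\tau_{\ge 0}(X)$ is by definition obtained as the pullback of $U^\heart\subseteq X^\heart$ (embedded via $(\pi_0)^*$) along the truncation map $t$.

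Given this square, the variant of \cite[Proposition 6.3.4.1]{SAG} used in the proof of Proposition \ref{support and qc pullback} supplies the Beck--Chevalley equivalence $(j^\heart)^* t_*\simeq (t_U)_* (j')^*$. Applying $t_*$ to the cofiber sequence $\Gamma_K(\sF)\to\sF\to (j')_*(j')^*\sF$ of Corollary \ref{corollary semi-orthogonal sequence}, and using commutativity of the square ($t\circ j'\simeq j^\heart\circ t_U$) together with the above base-change identity to rewrite
\[
t_*(j')_*(j')^*\sF \;\simeq\; (j^\heart)_*(t_U)_*(j')^*\sF \;\simeq\; (j^\heart)_*(j^\heart)^* t_*(\sF),
\]
one arrives at a cofiber sequence matching that produced by Corollary \ref{corollary semi-orthogonal sequence} applied directly to $t_*(\sF)$. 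Comparing fibers then yields the canonical equivalence $t_*(\Gamma_K(\sF))\simeq \Gamma_K(t_*(\sF))$.

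The only nontrivial point is verifying the pullback square, which requires a careful unwinding of the compression/decompression machinery across the boundary between non-connective and connective spectral stacks; the relevant fact is that the embedding $(\pi_0)^*$ is fully faithful, so that limits over ordinary stacks regarded as non-connective spectral stacks agree with the intrinsic ones. Once this is in hand, the remainder is a routine transcription of the base-change manipulation already carried out in the proofs of Propositions \ref{support and qc pullback} and \ref{support and connective cover}.
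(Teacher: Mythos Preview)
Your strategy—produce a pullback square relating the truncation map to the open immersions, then run the base-change/cofiber-sequence comparison exactly as in Propositions~\ref{support and qc pullback} and~\ref{support and connective cover}—is precisely the paper's approach. The paper even says ``Repeat the proof of Proposition~\ref{support and connective cover}'' and supplies the square.

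There is one discrepancy worth flagging. The statement as printed has the direction of $t$ reversed: the truncation map actually goes $t:X^\heart\to\tau_{\ge 0}(X)$ (this is the map alluded to in Remark~\ref{connective cover and decompression}, and is the only direction compatible with the hypothesis $\sF\in\QCoh(X^\heart)$, since then $t_*(\sF)\in\QCoh(\tau_{\ge 0}(X))$). You took the stated direction $\tau_{\ge 0}(X)\to X^\heart$ at face value and then, noticing the resulting type mismatch, moved $\sF$ to $\QCoh(\tau_{\ge 0}(X))$. The paper instead keeps $\sF\in\QCoh(X^\heart)$ and uses the square
\[
\begin{tikzcd}
U^\heart\ar{r}{t'} \ar{d}{j^\heart} & \tau_{\ge 0}(U)\ar{d}{j'} \\
X^\heart\ar{r}{t} & \tau_{\ge 0}(X),
\end{tikzcd}
\]
which is a pullback because $j'$ is obtained by decompression (Proposition~\ref{Decompressing opens}). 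Once you orient $t$ correctly, your base-change manipulation $t_*(j^\heart)_*(j^\heart)^*\sF\simeq j'_* t'_* (j^\heart)^*\sF\simeq j'_* (j')^* t_*\sF$ goes through verbatim and matches the paper's proof.
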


\begin{proof}
Repeat the proof of Proposition \ref{support and connective cover}, using the pullback square
$$
\begin{tikzcd}
U^\heart\ar{r}{t'} \ar{d}{j'^\heart} & \tau_{\ge 0}(U)\ar{d}{j'} \\
X^\heart\ar{r}{t} & \tau_{\ge 0}(X).
\end{tikzcd}
$$
That this is indeed a pullback square follows for instance from Proposition \ref{Decompressing opens}, since the right vertical arrow is obtained by decompression.
\end{proof}

\begin{remark}
Proposition \ref{support and truncation} may be viewed as saying that supported sections of quasi-coherent sheaves on an ordinary geometric stack $X^\heart$ are insensitive to spectral enhancements $\tau_{\ge 0}(X)$ of $X^\heart$. 
\end{remark}

\begin{remark}\label{Gamma_K for heart}
For $X^\heart$ itself, the inclusion $\QCoh(X^\heart)^\heart\to\QCoh(X^\heart)$ induces an equivalence of $\i$-categories
$$\QCoh(X^\heart)\simeq\widehat{\mathcal D}(\QCoh(X^\heart)^\heart),$$
between the stable $\i$-category of spectral quasi-coherent sheaves on $X^\heart$ and the completed derived $\i$-category of the abelian category of ordinary quasi-coherent sheaves on $X^\heart$, in the sense of \cite[Definition C.5.9.4]{SAG}. See \cite[Corollary 10.3.4.13]{SAG} for a proof under the additional assumption that $X^\heart$ is spectral Deligne-Mumford. Under that equivalence, and using \cite[Example 7.1.4.6]{SAG} for the affine case, we may deduce that $\Gamma_K(\sF)$ becomes identified with the total right derived functor $R\Gamma_K^\heart(\sF)$ of the ordinary $K$-supported sections functor $\Gamma_K^\heart:\QCoh(X^\heart)^\heart\to\QCoh(X^\heart)^\heart.$
\end{remark}

The following spectral sequence, a global version of \cite[Equation (3.2)]{Greenlees-May}, is a useful  tool for computing compactly supported sections. Here we let $\mathcal H^i_K$ denote the $i$-th derived functor of the ordinary $K$-supported sections functor $\Gamma_K^\heart:\QCoh(X^\heart)^\heart\to\QCoh(X^\heart)^\heart.$

\begin{prop}[Local cohomology spectral sequence]\label{lc spectral sequence}
For any quasi-coherent sheaf $\sF\in\QCoh(X)$, there exists a canonical Adams-graded spectral sequence
$$
E^{s, t}_2 = \mathcal H^s_K(\pi_t(\sF))\Rightarrow \pi_{t-s}(\Gamma_K(\sF))
$$
in the abelian category $\QCoh(X^\heart)^\heart$.
\end{prop}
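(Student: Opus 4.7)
The plan is to construct the spectral sequence as the filtration spectral sequence associated to the Postnikov tower of $\sF$ after applying the supported sections functor $\Gamma_K$. The first step is to reduce to computations on the underlying ordinary stack $X^\heart$. Propositions \ref{support and connective cover} and \ref{support and truncation} imply that $\Gamma_K$ commutes with both the connective-cover pushforward $c_*$ and the truncation pushforward $t_*$. Since the homotopy sheaves of any quasi-coherent sheaf on $X$ are canonically ordinary quasi-coherent sheaves on $X^\heart$, computing $\pi_*(\Gamma_K(\sF))$ reduces to computing $\pi_*(\Gamma_K(\sG))$, where $\sG := t_*(c_*(\sF)) \in \QCoh(X^\heart)$. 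By Remark \ref{Gamma_K for heart}, under the equivalence of $\i$-categories $\QCoh(X^\heart) \simeq \widehat{\mathcal D}(\QCoh(X^\heart)^\heart)$, the functor $\Gamma_K$ identifies with the total right derived functor $R\Gamma_K^\heart$ of the abelian-categorical $K$-supported sections functor.

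Next, I would consider the Postnikov tower $\cdots \to \tau_{\le n+1}\sG \to \tau_{\le n}\sG \to \cdots$, which is a filtered object in $\QCoh(X^\heart)$ whose $n$-th graded piece is the shifted homotopy sheaf $\pi_n(\sG)[n]$. Since $R\Gamma_K^\heart$ is exact in the derived sense, applying it produces a filtered object of $\QCoh(X^\heart)$ with $n$-th graded piece $R\Gamma_K^\heart(\pi_n(\sG)[n])$. The associated Adams-graded filtration spectral sequence then has $E_2$-page
$$E^{s,t}_2 \;\cong\; \pi_{t-s}\bigl(R\Gamma_K^\heart(\pi_t(\sG)[t])\bigr) \;=\; R^s\Gamma_K^\heart(\pi_t(\sG)) \;=\; \mathcal H^s_K(\pi_t(\sF)),$$
using the general fact that for an object in the heart, the $(-s)$-th homotopy sheaf of its total right derived functor is the $s$-th ordinary derived functor. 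The abutment $\pi_{t-s}(R\Gamma_K^\heart(\sG)) \simeq \pi_{t-s}(\Gamma_K(\sF))$ follows from the reduction of the first paragraph.

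The main technical subtlety I expect is convergence: Postnikov towers in the stable $\i$-category $\QCoh(X)$ do not converge unconditionally for unbounded $\sF$. This is precisely why Remark \ref{Gamma_K for heart} appeals to the completed derived category $\widehat{\mathcal D}(\QCoh(X^\heart)^\heart)$, in which Postnikov towers converge by construction. Checking that this completeness is compatible with $R\Gamma_K^\heart$ and with the usual filtration spectral sequence machinery is the only nontrivial point; the rest of the argument is bookkeeping with the Postnikov tower and the naturality of its associated spectral sequence.
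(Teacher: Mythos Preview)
Your approach is essentially the paper's: build the spectral sequence from the Postnikov/Whitehead filtration after applying $\Gamma_K$, and identify the $E_2$-page via Remark~\ref{Gamma_K for heart} together with Propositions~\ref{support and connective cover} and~\ref{support and truncation}. The paper uses the tower $n\mapsto \Gamma_K(\tau_{\ge -n}(\sF))$ in $\QCoh(\tau_{\ge 0}(X))$ rather than the Postnikov truncations $\tau_{\le n}$, but this is immaterial.

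There is one bookkeeping slip in your reduction step. The truncation map goes $t: X^\heart\to \tau_{\ge 0}(X)$ (see the pullback square in the proof of Proposition~\ref{support and truncation}), so $t_*:\QCoh(X^\heart)\to\QCoh(\tau_{\ge 0}(X))$ and the composite $t_*(c_*(\sF))$ does not typecheck; you cannot push $\sF$ all the way down to $\QCoh(X^\heart)$ in this way. The paper circumvents this by carrying out the entire construction in $\QCoh(\tau_{\ge 0}(X))$: it moves $\sF$ there via $c_*$ (using Proposition~\ref{support and connective cover} to match the abutment), and then for the $E_2$-identification observes that each $\pi_t(\sF)\in\QCoh(X^\heart)^\heart$ sits in $\QCoh(\tau_{\ge 0}(X))$ via $t_*$, so that Proposition~\ref{support and truncation} lets one compute $\Gamma_K(\pi_t(\sF))$ on $X^\heart$ and invoke Remark~\ref{Gamma_K for heart}. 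With that adjustment your argument is the paper's argument.
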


\begin{proof}
Fix a quasi-coherent sheaf $\sF\in\QCoh(X)$. We may imitate Construction \cite[Construction 1.5.6]{ChromaticCartoon}, using the filtered object
$$
\mathbf Z\ni n \mapsto\Gamma_K(\tau_{\ge  -n}(\sF))\in\QCoh(\tau_{\ge 0}(X))
$$
to obtain a spectral sequence
\begin{equation}\label{partial ss}
E^{s,t}_2 = \pi_{-s}(\Gamma_K(\pi_t(\sF))\Rightarrow \pi_{t-s}(\Gamma_K(\sF))
\end{equation}
in the abelian category $\QCoh(\tau_{\ge 0}(X))^\heart$. It follows from Proposition \ref{support and connective cover} that the $\i$-page is the same regardless of whether $\Gamma_K(\sF)$ is interpreted in $\QCoh(X)$, or whether it is (by an anonymous application of $c_*$) interpreted in $\QCoh(\tau_{\ge 0}(X))$. Using Remark \ref{Gamma_K for heart}, we have for any $\sG\in\QCoh(X^\heart)^\heart$ and any $s\ge 0$ canonical isomorphisms
$$
\pi_{-s}(\Gamma_K(\sG))\simeq \pi_{-s}(R\Gamma_K^\heart(\sG)) \simeq R^s \Gamma_K^\heart(\sG)\simeq \mathcal H^s_K(\sG)
$$
in the abelian category $\QCoh(X^\heart)^\heart$.
Here by Proposition \ref{support and truncation}, the supported sections on the left-most side  may be interpreted either in $\QCoh(\tau_{\ge 0}(X))$ or (by an anonymous application of $t_*$) in $\QCoh(X^\heart)$. Since any homotopy sheaf $\pi_t(\sF)$ by definition always belongs to $\QCoh(\tau_{\ge 0}(X))^\heart\simeq \QCoh(X^\heart)^\heart$, taking $\sG = \pi_t(\sF)$ now identifies the second page of the spectral sequence \eqref{partial ss} with the desired form from the statement of the Proposition.
\end{proof}

\section{Formal completions in non-connective spectral algebraic geometry}\label{Section 2}

We would next like to discuss the process of formally completing a non-connective spectral stack $X$ along a closed (reduced) substack $K\subseteq X^\heart.$  Sheaves on such a formal completion $X^\wedge_K$ should correspond to sheaves on $X$ with support in $K$, as discussed in the previous subsection. The goal of this section is therefore to define $X^\wedge_K$, quasi-coherent sheaves on it, and, under some restrictions on $X$ an $K$, find a canonical equivalence of $\i$-categories
$$
\QCoh(X^\wedge_K)\simeq \QCoh_K(X).
$$ 
 As it turns out, the setup for formal completion in the  world of non-connective spectral algebraic geometry, appropriate for our needs, is a little more intricate than one might hope . See the following remarks for some explanation and justification of these difficulties.

\begin{remark}\label{indispensably adic}
Any connective complete adic $\E$-ring may obtained as a filtered limit of connective $\E$-rings, equipped with the discrete adic topology by \cite[Remark 8.1.2.4]{SAG}. This implies \cite[Theorem 8.1.5.1]{SAG} that connective formal spectral algebraic geometry is fully determined, in the functor of points approach, by restriction to the full subcategory $\CAlg^\mathrm{cn}\subseteq (\CAlg_\mathrm{ad}^\mathrm{cpl})^\mathrm{cn}$. The proof of this however crucially employs induction up the Postnikov tower of truncations - a proof technique which lacks the starting base-case in the non-connective setting. Indeed, we do not know if, or under what conditions, the analogous result holds in non-connective spectral algebraic geometry. This is why in our setting, unlike in  \cite[Definition 18.2.1.1]{SAG}, we are not able to restrict our attention in defining the formal completion to (presheaves on) the subcategory $\CAlg\subseteq\CAlg_\mathrm{ad}^\mathrm{cpl}$ of adic $\E$-rings with discrete topology, equivalent to the usual $\i$-category of $\E$-rings. In the non-connective world, adic rings are indispensable.
\end{remark}

\begin{remark}
Since our interest ultimately lies in complete adic $\E$-rings (the basic affine objects of formal spectral algebraic geometry), it may seem unclear why we would not impose the completeness assumption from the start, yet we instead work with arbitrary adic $\E$-rings for much of this section. The reason  is that we will at certain points, most notably in Proposition \ref{informal charts}, have to rely crucially on the ability to discuss fpqc descent. While  there are no difficulties in porting the fpqc topology to the $\i$-category $\CAlg_\mathrm{ad}$, issues do arise when trying to restrict it to the full subcategory $\CAlg_\mathrm{ad}^\mathrm{cpl}\subseteq\CAlg_\mathrm{ad}$. One of the basic assumptions for a Grothendieck topology is that coverings are stable under base-change. If $A\to B$ is a map of complete adic $\E$-rings, and $A\to A'$ is a flat cover, then the base-change $B\to A'\o_A B$ is indeed also faithfully flat - hence all is good in $\CAlg_\mathrm{ad}$. But to form base-change in $\CAlg_\mathrm{ad}^\mathrm{cpl}$, we must form the completion $A'\widehat{\o}_A B$ of the usual relative smash product. The issue is that the completion map $A'\o_A B\to A'\widehat{\o}_A B$, or more generally $A\to A^\wedge_I$ for any adic $\E$-ring $A$ with an ideal of definition $I\subseteq\pi_0(A)$, is in general not  faithfully flat.
By \cite[\href{https://stacks.math.columbia.edu/tag/00MC}{Tag 00MC}]{stacks-project}, this issue can be overcome by restricting everywhere to adic $\E$-rings $A$ for which $\pi_0(A)$ is a local Noetherian ring with its usual adic topology - that is the approach we followed in \cite{DevHop}. But in the general case, it seems to be important to split up the story into an adic setting, which interacts well with descent, and a smaller formal setting, which does not. As we will see in Proportion \ref{QCoh don't care about sheafification} and Lemma \ref{QCoh don't care about completion} however, at least quasi-coherent sheaves (as we define them) are insensitive to either sheafification or completion.
\end{remark}

\subsection{Adic $\E$-rings} We begin with the discussion of auxiliary notions, required for the discussion of formal completion in our setting.

\begin{definition}\label{def of adic}
An \textit{adic $\E$-ring} is an $\E$-ring $A$, together with an adic topology on $\pi_0(A)$, that is to say, an $I$-adic topology for some finitely generated ideal $I\subseteq\pi_0(A)$. A \textit{map of adic $\E$-rings} is an $\E$-ring map $A\to B$, such that the induced ring homomorphism $\pi_0(A)\to\pi_0(B)$ is continuous with respect to the adic topologies.
\end{definition}

\begin{remark}
The ideal $I\subseteq\pi_0(A)$ in Definition \ref{def of adic} is not part of the data of an adic $\E$-ring $A$. Indeed,  the ideal $I^n$ will do just as well, as might others. On the other hand, the topology on $\pi_0(A)$ is itself part of the data, and any finitely generated ideal $I\subseteq\pi_0(A)$, for which this is the $I$-adic topology, is called an \textit{ideal of definition}. The thing that all ideals of definition share is the reduced closed subscheme $V(I)_\mathrm{red} = V(\sqrt I)\subseteq\Spec(\pi_0(A))$ that they determine, see \cite[Remark 8.1.1.4]{SAG}.
\end{remark}

\begin{remark}
Let $\CAlg_\mathrm{ad}$ denote the $\i$-category of adic $\E$-rings. It follows from the definition that the diagram of $\i$-categories
$$
\begin{tikzcd}
\CAlg_\mathrm{ad}\ar{r}{} \ar{d}{\pi_0} & \CAlg\ar{d}{\pi_0}\\
\CAlg^\heart_\mathrm{ad}\ar{r}{} & \CAlg^\heart,
\end{tikzcd}
$$
in which the unlabeled arrows are ``forgetful" functors, discarding the adic topology, is a pullback square in $\Cat$. We might say that adic $\E$-rings are obtained from  ordinary adic commutative rings  by ``decompression".
\end{remark}

\begin{prop}\label{triv and disc}
The functor $\CAlg_\mathrm{ad}\to\CAlg$, which discards the adic topology, admits both a left and a right fully faithful adjoint. Hence it preserves both limits and colimits.
\end{prop}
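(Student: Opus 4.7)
The plan is to exhibit the two adjoints explicitly, by equipping any $\E$-ring $A$ with either the discrete or the indiscrete topology on $\pi_0(A)$, and then to verify the adjunctions hold tautologically because continuity conditions become vacuous in these extremal cases.

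First, I would verify that both the discrete and the indiscrete topology on $\pi_0(A)$ are adic. The discrete topology corresponds to the ideal of definition $I = 0$ (which is finitely generated). For the indiscrete topology, take $I = \pi_0(A) = (1)$, which is finitely generated by the unit, and note that $I^n = \pi_0(A)$ for all $n \ge 0$, so the only neighborhood basis of $0$ is $\{\pi_0(A)\}$, giving the indiscrete topology. Hence both constructions produce genuine adic $\E$-rings, and they assemble into functors $L, R : \CAlg \to \CAlg_\mathrm{ad}$ whose composition with the forgetful functor $U : \CAlg_\mathrm{ad}\to\CAlg$ is the identity.

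Next I would check the adjunctions $L \dashv U \dashv R$ by showing that the underlying space of mapping spaces on each side agree. For any $A\in\CAlg$ and $B\in\CAlg_\mathrm{ad}$, a map $L(A)\to B$ in $\CAlg_\mathrm{ad}$ is an $\E$-ring map $A\to B$ whose induced homomorphism $\pi_0(A)\to\pi_0(B)$ is continuous for the discrete topology on $\pi_0(A)$; since every set in the source is open, continuity is automatic, giving $\Map_{\CAlg_\mathrm{ad}}(L(A), B)\simeq \Map_{\CAlg}(A, U(B))$. Dually, a map $B\to R(A)$ is an $\E$-ring map $B\to A$ whose $\pi_0$-level map lands in the indiscrete topology on $\pi_0(A)$; continuity again is automatic, yielding $\Map_{\CAlg_\mathrm{ad}}(B, R(A))\simeq \Map_{\CAlg}(U(B), A)$. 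Thus $L$ is left adjoint and $R$ is right adjoint to $U$.

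Full faithfulness of both adjoints follows formally: since $U\circ L \simeq \id_{\CAlg}$ and $U\circ R \simeq \id_\CAlg$, the unit of $L\dashv U$ and the counit of $U\dashv R$ are both equivalences, and these are precisely the criteria for fully faithfulness. Finally, since $U$ itself is simultaneously a right adjoint (to $L$) and a left adjoint (to $R$), it preserves all small limits and colimits. The only step requiring any genuine thought is the verification that $I = (1)$ qualifies as an ideal of definition under our conventions, since this determines whether the indiscrete topology is admissible; everything else is automatic from the definition of continuity into (resp.\ out of) discrete and indiscrete spaces.
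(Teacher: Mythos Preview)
Your proof is correct and follows essentially the same approach as the paper's: both exhibit the left adjoint by equipping $\pi_0(A)$ with the discrete topology (ideal of definition $(0)$) and the right adjoint by equipping it with the trivial/indiscrete topology (ideal of definition $(1)$), then deduce full faithfulness from $U\circ L\simeq \id$ and $U\circ R\simeq \id$. Your verification of the adjunctions via vacuity of the continuity condition is slightly more explicit than the paper's, but the argument is the same.
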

\begin{proof}
It suffices to exhibit the adjoints:
\begin{itemize}
\item The left adjoint $\delta:\CAlg\to \CAlg_\mathrm{ad}$ views any $\E$-ring $A$ as an adic $\E$-ring by equipping $\pi_0(A)$ with the discrete topology. This is indeed an adic topology, since its ideals of definition are precisely all nilpotent ideals, e.g.\ $I = (0)$.

\item The right adjoint $\tau:\CAlg\to \CAlg_\mathrm{ad}$ views any $\E$-ring $A$ as an adic $\E$-ring by equipping $\pi_0(A)$ with the trivial topology. This is indeed an adic topology, since its ideal of definition $I = \pi_0(A)$ is generated by any invertible element , e.g.\ $A=(1)$.
\end{itemize}
That these are indeed the respective adjoints is a matter of basic point-set topology. To see that $\delta$ and $\tau$  are fully faithful, we must show that the respective unit and counit of the adjunctions they participate in, are equivalences. That follows from observing that both of the canonical maps $A\to\delta(A)$ and $\tau(A)\to A$ in $\CAlg_\mathrm{ad}$ induce equivalences on underlying $\E$-rings for any adic $\E$-ring $A$.
\end{proof}

The functor $\tau$ from the proof of Proposition \ref{triv and disc} will play no role in the remainder of this paper. The functor $\delta : \CAlg\to\CAlg_\mathrm{ad}$ on the other hand will. We view it as the default embedding of $\E$-rings into adic $\E$-rings, and will sometimes omit it from notation.

The adjunction $\delta :\CAlg\rightleftarrows \CAlg_\mathrm{ad} : U$ induces adjunctions between copresheaf $\i$-categories
$$
\Fun(\CAlg, \mS)
\underset{\delta^*}{\overset{\delta_!}\rightleftarrows}
\Fun(\CAlg_\mathrm{ad}, \mS)
\underset{U^*}{\overset{U_!}\rightleftarrows}
\Fun(\CAlg, \mS)
$$
via the usual left Kan extension and composition. These two adjunctions are connected as
$$
U_!\dashv U^* \simeq \delta_! \dashv \delta^*,
$$
where the equivalence in the middle stems from the adjunction $\delta\dashv U$ by abstract nonsense.

\begin{remark}\label{ff functors on adic}
As consequence of the fact that $U\circ \delta\simeq \mathrm{id}$, we obtain have $\delta^*\circ U^*\simeq (U\circ \delta)^*\simeq \mathrm{id}$. That shows the middle functor $U^*\simeq\delta_!:\Fun(\CAlg, \mS)\to \Fun(\CAlg_\mathrm{ad}, \mS)$ to be fully faithful. We view it as an implicit inclusion of functors $\CAlg\to\mS$ into functors $\CAlg_\mathrm{ad}\to \mS$, and will as such often omit it from notation.
\end{remark}

As discussed in the preceding remark, we will view $\Fun(\CAlg_\mathrm{ad}, \mS)$ as an enlargement of $\Fun(\CAlg, \mS)$. The main thesis of this paper and its predecessors is that the latter is a good setting for a certain portion of non-connective spectral algebraic geometry. As such, it seems reasonable to seek out analogues of some of the basic notions of (non-connective) spectral algebraic geometry in the adic setting as well. The following two definitions are a first step in this program.

\begin{definition}
The \textit{adic spectrum} $\mathrm{Spad}(A):\CAlg_\mathrm{ad}\to\mS$ of an adic $\E$-ring $A$ is the corresponding corepresentable functor $B\mapsto \Map_{\CAlg_\mathrm{ad}}(A, B)$.
\end{definition}

\begin{definition}
A map of adic $\E$-rings is \textit{faithfully flat} if and only if  its underlying $\E$-ring map is faithfully flat in the sense of \cite[Definition B.6.1.1]{SAG}.
\end{definition}

\begin{prop}
Defining coverings in $\CAlg_\mathrm{ad}$ by demanding that the forgetful functor $\CAlg_\mathrm{ad}\to \CAlg$ take them to fpqc coverings in $\CAlg$ in the sense of \cite[Proposition B.6.1.3]{SAG}, defines a Grothendieck topology on the opposite of the $\i$-category $\CAlg_\mathrm{ad}$
\end{prop}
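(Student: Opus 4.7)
The plan is to transfer the three Grothendieck topology axioms from the fpqc topology on $\CAlg^{\op}$, established in \cite[Proposition B.6.1.3]{SAG}, to $\CAlg_\mathrm{ad}^{\op}$ via the forgetful functor $U : \CAlg_\mathrm{ad} \to \CAlg$. The identity axiom is immediate, since $U$ preserves identities and the identity morphism is trivially faithfully flat.

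The main step is stability under base-change. Given a covering family $\{A \to A_i\}$ in $\CAlg_\mathrm{ad}$ and an arbitrary morphism $A \to B$ in $\CAlg_\mathrm{ad}$, I first construct the pushouts $B \sqcup_A A_i$ in $\CAlg_\mathrm{ad}$ explicitly. Set $C := B \otimes_A A_i$, the pushout in $\CAlg$, and equip $\pi_0(C)$ with the adic topology whose ideal of definition is $J_B\, \pi_0(C) + J_{A_i}\, \pi_0(C)$, where $J_B \subseteq \pi_0(B)$ and $J_{A_i} \subseteq \pi_0(A_i)$ are chosen ideals of definition. This is a finitely generated ideal, hence gives a genuine adic structure, and the resulting adic $\E$-ring satisfies the universal property of the pushout in $\CAlg_\mathrm{ad}$: continuity of the induced map $C \to D$ out of $C$ into any test $D \in \CAlg_\mathrm{ad}$ follows from continuity of the given maps out of $B$ and $A_i$, since the preimage in $\pi_0(C)$ of any ideal of definition of $\pi_0(D)$ must contain a suitable power of $J_B \pi_0(C) + J_{A_i} \pi_0(C)$. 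By Proposition \ref{triv and disc}, this is consistent: $U$ preserves all colimits, so in particular sends this pushout to $B \otimes_A A_i$ in $\CAlg$. Since the fpqc pullback-stability axiom in $\CAlg^{\op}$ ensures $B \to B \otimes_A A_i$ is faithfully flat, we conclude that $\{B \to B \sqcup_A A_i\}$ is a covering in $\CAlg_\mathrm{ad}^{\op}$.

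Transitivity is entirely formal: if $\{A \to A_i\}$ and each $\{A_i \to A_{ij}\}$ are coverings in $\CAlg_\mathrm{ad}$, applying $U$ yields fpqc coverings in $\CAlg$, whose composite $\{U(A) \to U(A_{ij})\}$ is an fpqc covering by transitivity in $\CAlg^{\op}$, so $\{A \to A_{ij}\}$ is a covering in $\CAlg_\mathrm{ad}^{\op}$ by definition.

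The only nontrivial content lies in the base-change step, namely constructing the adic pushout and verifying that $U$ sends it to the expected object of $\CAlg$. This is entirely a concrete analogue of the general colimit-preservation from Proposition \ref{triv and disc}; it works precisely because we have not yet imposed the completeness condition, whereas in $\CAlg_\mathrm{ad}^\mathrm{cpl}$ the completed relative smash product destroys faithful flatness and the transfer of axioms would fail, exactly as anticipated in the earlier remark.
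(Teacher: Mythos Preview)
Your proof is correct and reaches the same conclusion, but the route differs from the paper's. The paper does not verify the Grothendieck topology axioms by hand; instead it observes that the proof of \cite[Proposition B.6.1.3]{SAG} goes through verbatim once one checks the hypotheses of the general criterion \cite[Proposition A.3.2.1]{SAG}, and those hypotheses (existence and compatibility of the relevant pushouts) follow immediately from the limit- and colimit-preservation of the forgetful functor established in Proposition~\ref{triv and disc}. Your approach unpacks this: you construct the adic pushout explicitly and check the three axioms directly, which is more self-contained and makes the role of colimit-preservation concrete, at the cost of some length. One minor redundancy: once you have built the pushout in $\CAlg_\mathrm{ad}$ with underlying $\E$-ring $B\otimes_A A_i$, you already know what $U$ does to it, so the appeal to Proposition~\ref{triv and disc} at that point is confirmatory rather than necessary.
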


\begin{proof}
We may directly repeat the proof of \cite[Proposition B.6.1.3]{SAG} from the non-adic setting, since its application of \cite[Proposition A.3.2.1]{SAG} requires checking conditions about adic $\E$-rings which follow from the corresponding ones about $\E$-rings by the colimit and limit preservation of Proposition \ref{triv and disc}.
\end{proof}

Let $\Shv_\mathrm{fpqc}^\mathrm{ad}\subseteq \Fun(\CAlg_\mathrm{ad}, \mS)$ denote the corresponding $\i$-topos. Due to both functors $\delta$ and $U$ respecting the fpqc topology, the presheaf-level adjuncton restricts to an adjunction
$$
U^*\simeq \delta_!:\Shv_\mathrm{fpqc}^\mathrm{nc} \rightleftarrows \Shv_\mathrm{fpqc}^\mathrm{ad}:\delta^*
$$
in which the left adjoint is fully faithful and preserves all limits. This is therefore in particular a geometric morphism of $\i$-topoi.

\begin{remark}\label{Spad def} 
By imitating the proof of \cite[Theorem D.6.3.5]{SAG}, the fpqc Grothendieck topology on $(\CAlg_\mathrm{ad})^\mathrm{op}$ is subcanonical, meaning that adic spectra satisfy flat descent. Indeed, the functor $\mathrm{Spad}:(\CAlg_\mathrm{ad})^\mathrm{op}\to \Shv_\mathrm{fpqc}^\mathrm{ad}$ is the right adjoint of the evident global functions functor. The relationship between the adic spectrum of an adic $\E$-ring and the usual spectrum of an $\E$-ring is encoded in the canonical equivalences $\delta^*\circ \mathrm{Spad}\simeq \Spec$ and $U^*\circ \Spec\simeq\mathrm{Spad}\circ \delta$.
\end{remark}

\subsection{De Rham space and adic pre-completion}

The approach of Remark \ref{ff functors on adic}, where the adic topology is simply disregarded, gives a canonical fully faithful embedding of presheaf $\i$-categories $\Fun(\CAlg, \mS)\to\Fun(\CAlg_\mathrm{ad}, \mS)$. At least when restricted to ordinary adic rings, we will make use of another functor in the same direction, given (in a variant of a notion intoduced by Carlos Simpson in \cite{Simpson}) as follows:

\begin{definition}\label{def of dR}
The \textit{de Rham space} of a functor $X : \CAlg^\heart\to \mS$ (e.g. an ordinary stack) is the functor $X_\mathrm{dR}:\CAlg^\heart_\mathrm{ad}\to \mS$ given by
$$
X_\mathrm{dR}(R) := \varinjlim_{I\subseteq R} X(R/I),
$$
with the colimit ranging over all ideals of definition $I\subseteq R$.
\end{definition}

\begin{remark}
Suppose that $X:\CAlg^\heart\to\mS$ is locally of finite presentation, which is to say that it commutes with filtered colimits. Then $X_\mathrm{dR}(R)\simeq X(R/\sqrt{I})$, which is well-defined since all ideals of definition have the same radical (equivalently: determine the same reduced closed subscheme). In particular, for any commutative ring $R$ equipped with the discrete adic topology, the ideals of definition are precisely the finitely generated nilpotent ideals in $R$. Hence the value of the de Rham space on $\CAlg^\heart\subseteq\CAlg^\heart_\mathrm{ad}$ is given by the classical formula
$X_\mathrm{dR}(R)  \simeq X(R_\mathrm{red}).$
\end{remark}

\begin{remark}\label{de Rham limitcolimit}
The colimit appearing in Definition \ref{def of dR} is filtered. Since limits and colimits in presheaf $\i$-categories are computed object-wise, it follows that the de Rham space construction $X\mapsto X_\mathrm{dR}$ commutes with colimits and finite limits.
\end{remark}

\begin{remark}
For any $X:\CAlg^\heart\to\mS$, there is a canonical map $X\to X_\mathrm{dR}$, where we view $X$ as a functor $\CAlg^\heart_\mathrm{ad}\to\CAlg^\heart\xrightarrow{X}\mS$ as  discussed in Remark \ref{ff functors on adic}. It is obtained by passing to the colimit over ideals of definition $I\subseteq R$ from the maps $X(R)\to X(R/I)$.
\end{remark}

We may use de Rham spaces and the compression map $X\to X^\heart\circ \pi_0$ to define a suitable notion of formal completion in our context.

\begin{definition}\label{completion def}
Let $X$ be a non-connective spectral stack, and $K\to X^\heart$ be a map of ordinary stacks. Then the \textit{adic pre-completion of $X$ along $K$} is defined to be the functor $X_K :\CAlg_\mathrm{ad}\to\mS$ given by the fiber product
$$
\begin{tikzcd}
X_K\ar{r} \ar{d} & K_\mathrm{dR}\circ \pi_0\ar{d}\\
X\ar{r} & X^\heart_\mathrm{dR}\circ\pi_0
\end{tikzcd}
$$
in the $\i$-category $\Fun(\CAlg_\mathrm{ad}, \mS)$. More explicitly, the value of the adic pre-completion of $X$ along $K$ at an adic $\E$-ring $A$ is
$$
X_K(A) := \varinjlim_{I\subseteq\pi_0(A)} X(A)\times_{X^\heart(\pi_0(A)/I)} K(\pi_0(A)/I),
$$
with the colimit ranging over all  the ideals of definition $I\subseteq\pi_0(A)$.
\end{definition}

\begin{remark}\label{formal completion decompresses}
In the setting of Definition \ref{completion def}, since the canonical map $X\to X^\heart_\mathrm{dR}$ factors through the compression map $X\to X^\heart\circ\pi_0,$ we have
\begin{eqnarray*}
X_K &\simeq&  X\times_{(X^\heart\circ\pi_0)} (X^\heart\circ\pi_0) \times_{(X^\heart_\mathrm{dR}\circ \pi_0)} (K_\mathrm{dR}\circ \pi_0)
 \\
&\simeq&
X\times_{(X^\heart\circ\pi_0)} ((X^\heart)_K\circ \pi_0).
\end{eqnarray*}
Hence adic pre-completion always happens on the level of ordinary stacks, and is then merely decompressed into the spectral world.
\end{remark}

\begin{remark}\label{formula for formal completions}
Suppose that the map of ordinary stack $K\to X^\heart$ is locally of finite presentation (or more precisely, of finite presentation to order $1$ in the sense of \cite[Definition  17.4.1.1]{SAG}). Then the diagram
$$
\begin{tikzcd}
\varinjlim_{I} K(\pi_0(A)/I)\ar{r}\ar{d} & K(\pi_0(A)/\sqrt{I})\ar{d}\\
\varinjlim_{I} X^\heart(\pi_0(A)/I)\ar{r} & X^\heart(\pi_0(A)/\sqrt{I})
\end{tikzcd}
$$
with the colimit ranging over all  the ideals of definition $I\subseteq\pi_0(A)$,
is a homotopy pullback, and so adic pre-completion is computed as
$$
X_K(A)\simeq X(A)\times_{X^\heart(\pi_0(A)/\sqrt{I})} K(\pi_0(A)/\sqrt{I}).
$$
\end{remark}

\begin{exun}\label{first exun}
Let $X$ be a non-connective spectral scheme, and $K\subseteq X^\heart$ a closed subscheme, cut out by a locally finitely generated quasi-coherent sheaf of ideals $\mathscr I\subseteq\sO_X$. The last condition is equivalent to the map $K\to X^\heart$ being finitely presented, hence we the formula of Remark \ref{formula for formal completions} applies. For any adic $\E$-ring $A$, the space $X_K(A)$ is therefore the union  of all the connected components in $X(A)\simeq \Map_{\mathrm{SpSch^{nc}}}(\Spec(A), X)$, for which the reduced subscheme $V(\sqrt{I})\subseteq\Spec (\pi_0(A))$ factors under the underlying map of ordinary schemes $\Spec(\pi_0(A))\to X$ through the inclusion $K\to X^\heart$. Since  $V(\sqrt{I})$ is a reduced scheme, this is equivalent by \cite[\href{https://stacks.math.columbia.edu/tag/0356}{Lemma 0356}]{stacks-project}
to the map of topological spaces $|\Spec (\pi_0(A))|\to |X|$ mapping the closed subset $|V(\sqrt I)|$ to $|K|$.
\end{exun}

Under some assumptions,  (sufficient for our purposes,) a ``covering'' of $X$ by affines $\Spec(A)$ induces a corresponding ``covering'' of its adic pre-completion
$X_K$ by adic affines $\mathrm{Spad}(A)$. This is the content of Proposition \ref{informal charts}, but we require a few simple auxiliary results first.

\begin{lemma}\label{Spf is informal comp}
Let $A$ be an adic $\E$-ring with an ideal of definition $I\subseteq A$. There is a canonical equivalence $\Spec(A)_{V(I)}\simeq \mathrm{Spad} (A)$ in the $\i$-category $\Fun(\CAlg_\mathrm{ad}, \mS)$.
\end{lemma}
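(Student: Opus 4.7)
The plan is to evaluate both sides on an arbitrary adic $\E$-ring $B$ and exhibit a natural equivalence, with both sides visibly landing inside $\Map_{\CAlg}(A,B)$.

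First I would unwind Definition \ref{completion def}. The functor of points of the closed subscheme $V(I)\subseteq \Spec(\pi_0(A))$ sends a commutative ring $R$ to the set of ring homomorphisms $\pi_0(A)\to R$ vanishing on $I$, which sits inside $\Hom_{\CAlg^\heart}(\pi_0(A),R)$ as an evident subset. Hence for each ideal of definition $J\subseteq\pi_0(B)$, the fiber product
$$
\Spec(A)(B)\times_{\Spec(\pi_0(A))(\pi_0(B)/J)} V(I)(\pi_0(B)/J)
$$
identifies with the subspace of $\Map_{\CAlg}(A,B)$ consisting of those maps $f$ for which $\pi_0(f)(I)\subseteq J$. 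Passing to the filtered colimit over ideals of definition $J$ will present $\Spec(A)_{V(I)}(B)$ as the subspace of $\Map_{\CAlg}(A,B)$ of those $f$ for which there exists an ideal of definition $J\subseteq\pi_0(B)$ with $\pi_0(f)(I)\subseteq J$.

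Next I would show that this existence condition is equivalent to continuity of $\pi_0(f):\pi_0(A)\to\pi_0(B)$ with respect to the $I$-adic topology on the source and the given adic topology on the target. Fixing any ideal of definition $J_0\subseteq\pi_0(B)$, the existence of some ideal of definition $J$ with $\pi_0(f)(I)\subseteq J$ is equivalent to $\pi_0(f)(I)\subseteq \sqrt{J_0}$: every ideal of definition shares the common radical $\sqrt{J_0}$, and conversely $J:=J_0+\pi_0(f)(I)$ is finitely generated with $\sqrt{J}=\sqrt{J_0}$ and contains $\pi_0(f)(I)$, so is itself an ideal of definition. Since $I$ is finitely generated, this containment is in turn equivalent to continuity of $\pi_0(f)$: one direction applies continuity at $n=1$ to find $\pi_0(f)(x)^m\in J_0$ for each generator $x$ of $I$, while the other uses a pigeonhole bound on monomials of high degree in the finitely many generators of $\pi_0(f)(I)$ to produce the required containments $\pi_0(f)(I^m)\subseteq J_0^n$.

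Combining the two reductions, $\Spec(A)_{V(I)}(B)$ identifies with the subspace of $\Map_{\CAlg}(A,B)$ of $\E$-ring maps whose $\pi_0$ is continuous, which by Definition \ref{def of adic} is exactly $\Map_{\CAlg_\mathrm{ad}}(A,B)=\mathrm{Spad}(A)(B)$. Naturality in $B$ is automatic from the construction. I expect the main obstacle to be the second step, namely translating the set-theoretic containment coming from the colimit into topological continuity, which crucially uses finite generation of $I$ and the common-radical characterization of ideals of definition on $\pi_0(B)$.
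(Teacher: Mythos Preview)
Your proposal is correct and follows essentially the same approach as the paper. The paper's proof is terser, invoking Example \ref{first exun} (and through it Remark \ref{formula for formal completions}) to collapse the filtered colimit to the single condition $\pi_0(f)(I)\subseteq\sqrt{J_0}$, whereas you compute the colimit directly as a union of component subspaces and then reduce to the same radical containment; both then identify that containment with continuity of $\pi_0(f)$, with your version spelling out the pigeonhole argument that the paper leaves implicit.
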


\begin{proof}
For any adic $\E$-ring $B$, Example \ref{first exun} shows there is a canonical homotopy equivalence
$
\Spec(A)_{V(I)}(B)\simeq \Map_{\CAlg_\mathrm{ad}}(A, B),
$
since a map of adic $\E$-rings is precisely a map of $\E$-rings which point-set-topologically (but not necessarily scheme-theoretically) respects the closed subschemes defined by ideals of definition. That is of course precisely the definition of the adic spectrum $\mathrm{Spad}(A)$.
\end{proof}

\begin{lemma}\label{base-change of informal comp}
Let $X'\to X$ be a map of non-connective spectral stacks, and denote $K':=(X')^\heart\times_{X^\heart} K$. The canonical map $X'\times_X X_K\to(X')_{K'}$ is an equivalence.
\end{lemma}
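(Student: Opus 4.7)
The plan is to unwind both sides using Definition \ref{completion def} and then to manipulate them by pasting pullback squares, using two facts already established earlier in the section: the de Rham space construction $Y \mapsto Y_\mathrm{dR}$ preserves finite limits (Remark \ref{de Rham limitcolimit}), and the compression functor $(-) \circ \pi_0 : \Shv_\mathrm{fpqc}^\heart \to \Shv_\mathrm{fpqc}^\mathrm{nc}$ preserves all limits, being a right adjoint by Proposition \ref{compressive adjunction}.

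First I would observe that since $K' = (X')^\heart \times_{X^\heart} K$ is a finite limit of ordinary stacks, applying first the de Rham space construction and then compression yields a pullback square
$$
\begin{tikzcd}
K'_\mathrm{dR} \circ \pi_0 \ar{r} \ar{d} & K_\mathrm{dR} \circ \pi_0 \ar{d} \\
((X')^\heart)_\mathrm{dR} \circ \pi_0 \ar{r} & X^\heart_\mathrm{dR} \circ \pi_0
\end{tikzcd}
$$
in the $\i$-category $\Fun(\CAlg_\mathrm{ad}, \mS)$. Next I would expand $(X')_{K'}$ as the pullback of $X' \to ((X')^\heart)_\mathrm{dR} \circ \pi_0$ against $K'_\mathrm{dR} \circ \pi_0 \to ((X')^\heart)_\mathrm{dR} \circ \pi_0$ per Definition \ref{completion def}, then paste with the square above to obtain
$$
(X')_{K'} \simeq X' \times_{X^\heart_\mathrm{dR} \circ \pi_0} (K_\mathrm{dR} \circ \pi_0),
$$
where the map $X' \to X^\heart_\mathrm{dR} \circ \pi_0$ factors through $X' \to X \to X^\heart_\mathrm{dR} \circ \pi_0$ by functoriality.

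Finally, factoring this map through $X$ lets me paste one more pullback square, giving
$$
(X')_{K'} \simeq X' \times_X \bigl( X \times_{X^\heart_\mathrm{dR} \circ \pi_0} (K_\mathrm{dR} \circ \pi_0) \bigr) \simeq X' \times_X X_K,
$$
and this identification visibly agrees with the canonical comparison map, completing the proof.

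I do not expect any genuine obstacle: the whole argument is a formal manipulation of iterated pullbacks, and the only non-trivial inputs are the finite-limit preservation properties of de Rham spaces and of the compression embedding, both of which have already been recorded.
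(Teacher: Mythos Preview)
Your proof is correct and follows essentially the same route as the paper's: both arguments reduce to pasting pullback squares, using that $(-)_{\mathrm{dR}}$ preserves finite limits and that precomposition with $\pi_0$ preserves limits. The paper first invokes Remark \ref{formal completion decompresses} to reduce to the case where all stacks are ordinary before carrying out the manipulation (and phrases the key step as ``de Rham commutes with products''), whereas you work directly with the non-connective $X$ and $X'$ and the ordinary $X^\heart$, $(X')^\heart$; this is a cosmetic difference only. One small remark: your appeal to Proposition \ref{compressive adjunction} is slightly misplaced, since the relevant precomposition here is along $\pi_0 : \CAlg_\mathrm{ad} \to \CAlg^\heart_\mathrm{ad}$ rather than the compression embedding of that proposition, but limit-preservation of any precomposition functor on presheaf $\infty$-categories is immediate from limits being computed pointwise, so no harm is done.
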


\begin{proof}
It suffices by Remark \ref{formal completion decompresses} to assume that all the stacks in sight are ordinary. Then we have
\begin{eqnarray*}
X'\times_X X_K &\simeq & X'\times_X X\times_{(X_\mathrm{dR}\circ \pi_0)} (K_\mathrm{dR}\circ \pi_0)\\
&\simeq & X'\times_{(X_\mathrm{dR}\circ \pi_0)}(K_{\mathrm{dR}}\circ \pi_0)\\
&\simeq & X'\times_{(X'_\mathrm{dR}\circ \pi_0)} (X'_\mathrm{dR}\circ \pi_0)\times_{(X_\mathrm{dR}\circ \pi_0)} (K_\mathrm{dR}\circ \pi_0)\\
&\simeq & X'\times_{(X'_\mathrm{dR}\circ \pi_0)}((X'\times_X K)_\mathrm{dR}\circ \pi_0)\\
&\simeq & (X')_{K'},
\end{eqnarray*}
where the only equivalence to require justification is the second-to-last. There we used the fact that the de Rham space functor $(-)_\mathrm{dR}:\Fun(\CAlg^\heart, \mS)\to \Fun(\CAlg_\mathrm{ad}^\heart, \mS)$ commutes with products, which is clear from Definition \ref{def of dR}.
\end{proof}

The statement of the following proposition involves a number of colimits, formed in different $\i$-categories. For the sake of clarity regarding where colimits are being formed, we will therefore denote the colimit of a diagram $C_i$ in an $\i$-category $\mC$ by $\varinjlim^{\mC}_i C_i$.

\begin{prop}\label{informal charts}
Let $X$ be a non-connective spectral stack, and let $K\to X^\heart$ be a closed immersion of finite presentation of ordinary stacks. 
Suppose that  $X\simeq \varinjlim_i^{\Shv_\mathrm{fpqc}^\mathrm{nc}} \Spec(A_i)$ for some diagram of $\E$-rings $A_i$, and suppose that that the closed subscheme $K\times_{X^\heart} \Spec(\pi_0(A_i))\subseteq\Spec (\pi_0(A_i))$ is defined by a finitely generated ideal $I_i\subseteq \pi_0(A_i)$ for evey $i$. Then the canonical map
$
\varinjlim_i^{\Fun(\CAlg_\mathrm{ad}, \mS)} \mathrm{Spad}(A_i)\to
X_K
$
 in $\Fun(\CAlg_\mathrm{ad}, \mS)$ induces an isomorphism upon sheafification. More precisely, it exhibits $\varinjlim_i^{\Shv_\mathrm{fpqc}^\mathrm{ad}} \mathrm{Spad}(A_i)$  as the fpqc sheafification of the adic pre-completion  $X_K$.
\end{prop}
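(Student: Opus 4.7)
My plan is to combine the two preceding lemmas with the universal colimits property of the $\infty$-topos $\Shv_\mathrm{fpqc}^\mathrm{ad}$. The first step is to chain Lemmas~\ref{base-change of informal comp} and~\ref{Spf is informal comp}: setting $K_i := \Spec(\pi_0(A_i)) \times_{X^\heart} K \simeq V(I_i)$ (by hypothesis), I obtain presheaf-level equivalences
$$
\Spec(A_i) \times_X X_K \;\simeq\; \Spec(A_i)_{K_i} \;\simeq\; \mathrm{Spad}(A_i)
$$
in $\Fun(\CAlg_\mathrm{ad}, \mS)$, where on the right $A_i$ carries the $I_i$-adic topology. The composite map $\mathrm{Spad}(A_i) \to X_K$ so produced is, by construction, the $i$-th component of the canonical map whose sheafification I wish to compute.

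Next, I sheafify. Denote by $L : \Fun(\CAlg_\mathrm{ad}, \mS) \to \Shv_\mathrm{fpqc}^\mathrm{ad}$ the fpqc sheafification; being a left-exact localization, it preserves finite limits. Now $\Spec(A_i)$ and $X$ are already sheaves on $\CAlg_\mathrm{ad}$, via the fully faithful limit-preserving embedding $U^* \simeq \delta_! : \Shv_\mathrm{fpqc}^\mathrm{nc} \to \Shv_\mathrm{fpqc}^\mathrm{ad}$, and $\mathrm{Spad}(A_i)$ is a sheaf by Remark~\ref{Spad def}. Applying $L$ to the presheaf pullback from the previous step therefore yields
$$
\mathrm{Spad}(A_i) \;\simeq\; U^*(\Spec(A_i)) \times_X L(X_K).
$$
Taking the colimit over $i$ in $\Shv_\mathrm{fpqc}^\mathrm{ad}$, using both universality of colimits (an axiom of the $\infty$-topos) and the colimit-preservation of the left adjoint $U^*$, gives
\begin{align*}
\varinjlim^{\Shv_\mathrm{fpqc}^\mathrm{ad}}_i \mathrm{Spad}(A_i)
&\;\simeq\; \big(\varinjlim^{\Shv_\mathrm{fpqc}^\mathrm{ad}}_i U^*(\Spec(A_i))\big) \times_X L(X_K)\\
&\;\simeq\; U^*\big(\varinjlim^{\Shv_\mathrm{fpqc}^\mathrm{nc}}_i \Spec(A_i)\big) \times_X L(X_K)\\
&\;\simeq\; U^*(X) \times_X L(X_K) \;\simeq\; L(X_K),
\end{align*}
which is the desired identification.

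The main point to navigate carefully is the presence of two distinct adic structures on each $A_i$: the discrete topology, which shows up on the right-hand side after sheafification through $U^*(\Spec(A_i))$, versus the $I_i$-adic topology, which is the one appearing in the statement of the proposition. The argument succeeds precisely because left-exactness of $L$ manufactures the discrete-topology version, whose colimit reassembles to $U^*(X) = X$, while the subsequent base-change over $X$ against $L(X_K)$ cuts out exactly the desired formal locus. A secondary point requiring care is that all three constructions (decompression via compression, the de Rham space, and adic pre-completion) make sense in the possibly non-sheafy presheaf $\infty$-category $\Fun(\CAlg_\mathrm{ad}, \mS)$, so the presheaf-level identifications must be genuinely presheaf-level before sheafification is invoked.
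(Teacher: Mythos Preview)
Your proposal is correct and follows essentially the same approach as the paper: both proofs combine Lemmas~\ref{base-change of informal comp} and~\ref{Spf is informal comp} with left-exactness of sheafification, the fact that $\Spec(A_i)$, $X$, and $\mathrm{Spad}(A_i)$ are already fpqc sheaves, universality of colimits in the $\infty$-topos $\Shv_\mathrm{fpqc}^\mathrm{ad}$, and colimit-preservation of the embedding $U^*\simeq\delta_!$. The only difference is the order of operations---you first identify each term at the presheaf level, then sheafify, then take the colimit, whereas the paper first pulls back the sheafified $L^\mathrm{ad}(X_K)$ along $X\simeq\varinjlim_i\Spec(A_i)$ using universality and then identifies each summand---but this is a cosmetic reorganization of the same argument.
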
 
\begin{proof}
Let $L^\mathrm{ad} : \Fun(\CAlg_\mathrm{ad}, \mS)\to \Shv_\mathrm{fpqc}^\mathrm{ad}$ denote the fpqc sheafification functor. From the assumed colimit presentation of $X$, and the fact that the canonical fully faithful inclusion $\delta_!:\Shv_\mathrm{fpqc}^\mathrm{nc}\subseteq\Shv_\mathrm{fpqc}^\mathrm{ad}$ is a left adjoint and therefore preserves colimits, we find that the equivalence $X\simeq \varinjlim^{\Shv_\mathrm{fpqc}^\mathrm{ad}}\Spec(A_i)$ holds in $\Shv_\mathrm{fpqc}^\mathrm{ad}$. Denoting by $L^\mathrm{ad}$ the fpqc sheafification, the universality of colimits in the $\i$-topos $\Shv_\mathrm{fpqc}^\mathrm{ad}$ implies that
$$
L^{\mathrm{ad}}(X_K)
\simeq \varinjlim_i{}^{\Shv_\mathrm{fpqc}^\mathrm{ad}} \Spec(A_i)\times_X L^\mathrm{ad}(X_K).
$$
Since both $X$ and $\Spec(A_i)$ already satisfy flat descent, and because $L^\mathrm{ad}$, just like any sheafification, preserves finite limits, we have
$$
\Spec(A_i)\times_X L^\mathrm{ad}(X_K)\simeq L^\mathrm{ad}(\Spec(A_i)\times_X X_K).
$$
Lemma \ref{base-change of informal comp} garners a canonical equivalence
$$
\Spec(A_i)\times_X X_K\simeq (\Spec(A_i)\times_X X)_{\Spec(A_i)\times_X K}\simeq \Spec(A_i)_{V(I_i)},
$$
in the $\i$-category $\Fun(\CAlg_\mathrm{ad}, \mS)$, and the right-most turn is identified with $\mathrm{Spad}(A_i)$ by Lemma \ref{Spf is informal comp}. As noted in Remark \ref{Spad def}, the functor $\mathrm{Spad}(A_i):\CAlg_\mathrm{fpqc}\to\mS$ satisfies flat descent, meaning it is invariant under sheafification.
The desired  result follows.
\end{proof}

\subsection{Quasi-coherent sheaves in the adic world}
We purse the idea that some amount of `functor of points' algebraic geometry carries over to the setting of $\Fun(\CAlg_\mathrm{ad}, \mS)$ a bit further, by considering a notion of quasi-coherent sheaves. As usual, we first discuss what happens in the affine setting:

\begin{definition}\label{complete def}
Let $A$ be an adic $\E$-ring, with an ideal of definition $I\subseteq\pi_0(A)$.
Recall from \cite[Definition 7.3.1.1]{SAG} that \textit{the $\i$-category of $I$-complete $A$-modules} is defined to be the right orthogonal $\Mod_A^{\mathrm{Cpl}(I)} :=\big(\Mod_A^{\mathrm{Loc}(I)}\big)^\perp$ to the full subcategory of $I$-local $A$-modules $\Mod_A^{\mathrm{Loc}(I)}\subseteq\Mod_A$. The  inclusion $\Mod_A^{\mathrm{Cpl}(I)}\subseteq\Mod_A$ admits by \cite[Notation 7.3.1.5]{SAG} a left adjoint $M\mapsto M^\wedge_I$, called \textit{$I$-completion}.
\end{definition}

\begin{remark}\label{Greenlees-May duality}
The pair $(\Mod_A^{\mathrm{Loc}(I)}, \Mod_A^{\mathrm{Cpl}(I)})$ comprises according to \cite[Proposition 7.3.1.4]{SAG} another semi-orthogonal decomposition of  the stable $\i$-category $\Mod_A$. In particular, every $A$-module $M$ fits into a cofiber sequence
$$
L_I(M)\to M\to M^\wedge_I.
$$
This semi-orthogonal decomposition is related to the one we discussed in Remark \ref{semi-orth loc-nilp}; by \cite[Proposition 7.3.1.7]{SAG}, the adjunction
$$
(-)^\wedge_I: \Mod_A^{\mathrm{Nil}(I)}\rightleftarrows \Mod_A^{\mathrm{Cpl}(I)}:\Gamma_I
$$
is an adjoint equivalence of $\i$-categories. This is a form of Greenlees-May duality \cite{Greenlees-May}.
\end{remark}

\begin{cons}\label{formal pushpull}
Let $f:A\to B$ be a map of adic $\E$-rings, and let $I\subseteq\pi_0(A)$ and $J\subseteq\pi_0(B)$ be ideals of definition. As an $\E$-ring map, $f$ induces an adjunction between $\i$-categories of modules $B\o_A-: \Mod_A\rightleftarrows \Mod_B$. It follows by \cite[Corollary 7.3.3.6]{SAG} from the continuity of $\pi_0(f)$ that the forgetful functor, e.g.~right adjoint of this adjunction, sends $J$-complete $B$-modules to $I$-complete $A$-modules. Thus it restricts to a functor $f_* :\Mod_B^{\mathrm{Cpl}(J)}\to\Mod_A^{\mathrm{Cpl}(I)}$. It follows by abstract nonsense that this functor admits a left adjoint $f^*$, given by the composite
$$
\Mod_A^{\mathrm{Cpl}(I)}\subseteq\Mod_A\xrightarrow{B\o_A -}\Mod_B\xrightarrow{(-)^\wedge_J}\Mod_B^{\mathrm{Cpl}(I)}.
$$
In summary, we obtain an adjunction
$f^* : \Mod_A^{\mathrm{Cpl}(I)}\rightleftarrows \Mod_B^{\mathrm{Cpl}(J)}: f_*$ between two presentable stable $\i$-categories.
\end{cons}

 \begin{ex}\label{examples of Modcpl}
 The bivariant functoriality  $f^*$ and $f_*$ of complete modules on adic $\E$-rings is paritcularly interesting when one or both of the adic topologies is discrete:
 \begin{enumerate}[label = (\alph*)]
\item  Let $A$ be an adic $\E$-ring with an ideal of definition $I\subseteq\pi_0(A)$. Consider the canonical adic $\E$-ring map $f :\delta(A)\to A$, where the domain is equiped with the discrete adic topology.
 Since $\Mod_{\delta(A)}^{\mathrm{Cpl}(0)}\simeq \Mod_A,$ the adjunction $f^*\dashv f_*$ of Construction \ref{formal pushpull} recovers the adjunction $(-)^\wedge_I : \Mod_A^{\mathrm{Cpl}(I)}\rightleftarrows \Mod_A$.
\item Let $f:A\to B$ be a map in $\CAlg\subseteq\CAlg_\mathrm{ad}$.
Then the adjunction $f^*\dashv f_*$ of Construction \ref{formal pushpull} is the usual tensor-forgetful adjunction $B\o_A -:\Mod_A\rightleftarrows \Mod_B.$
\end{enumerate}
 \end{ex}

 The $\i$-category $\Mod_A^{\mathrm{Cpl}(I)}$ also has by \cite[Variant 7.3.5.6]{SAG} a canonical symmetric monoidal structure $\widehat{\o}_A$, given by $M\widehat{\o}_A N :=(M\o_A N)^\wedge_I.$ It is clear from its definition in Construction \ref{formal pushpull} that the functor $f^*$ is symmetric monoidal with respect to this symmetric monoidal structure. In fact, the full functoriality of the construction $A\mapsto \Mod_A^{\mathrm{Cpl}(I)}$ may be encoded as a functor $\CAlg_\mathrm{ad}\to\CAlg(\Pr^\mathrm L)$.

 \begin{definition}\label{formal QCoh}
The functor of \textit{quasi-coherent sheaves}
$$\QCoh :\Fun(\CAlg_\mathrm{ad}, \mS)^\mathrm{op}\to \CAlg(\PrL)$$
is defined by right Kan extension from the above-described functor $A\mapsto \Mod_A^{\mathrm{Cpl}(I)}$. That is to say, the $\i$-category of quasi-coherent sheaves  is given by
$$
\QCoh\big(\varinjlim_i \mathrm{Spad}(A_i)\big)\simeq \varprojlim_i \Mod^{\mathrm{Cpl}(I_i)}_{A_i}.
$$
 \end{definition}

 \begin{remark}
 Let us unpack the content of Definition \ref{formal QCoh}.  For any morphism $f : X\to Y$ in $\Fun(\CAlg_\mathrm{ad}, \mS)$, we obtain from the definition of quasi-coherent sheaves an adjunction
$$
f^*:\QCoh(Y)\rightleftarrows \QCoh(X) : f_*,
$$
whose left adjoint $f^*$ we call \textit{pullback along $f$}, and whose right adjoint $f_*$ we call \textit{pushforward along $f$}. The $\i$-categories $\QCoh(X)$ and $\QCoh(Y)$ are symmetric monoidal with respective operations $\widehat{\o}_{\sO_X}$ and $\widehat{\o}_{\sO_Y}$, with respect to which the pullback functor $f^*$ is symmetric monoidal.
\end{remark}

\begin{remark}
As observed in Example \ref{examples of Modcpl}, the functor $\Mod^{\mathrm{Cpl}}:\CAlg^\mathrm{cpl}_\mathrm{ad}\to \CAlg(\Pr^{\mathrm L})$  recovers, upon restriction to the full subcategory of discrete adic $\E$-rings $\CAlg\subseteq\CAlg_\mathrm{ad}$, the usual bivariant functoriality of $\Mod : \CAlg\to \CAlg(\Pr^\mathrm L).$ Consequently, the functor $\QCoh$ of Definition \ref{formal QCoh} restricts on the full subcategory $\Shv_\mathrm{fpqc}^\mathrm{nc}\subseteq\Fun(\CAlg, \mS)\subseteq\Fun(\CAlg_\mathrm{ad}, \mS)$ to the eponymous functor $\QCoh$ of  \cite[Defintion 1.4.2]{ChromaticCartoon}.
\end{remark}

\begin{prop}\label{QCoh don't care about sheafification}
The functor $\QCoh$ of Definition \ref{formal QCoh} is invariant under fpqc sheafification.
\end{prop}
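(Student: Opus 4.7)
The plan is to exploit the fact that the source of the right Kan extension defining $\QCoh$ already factors through the subcategory of fpqc sheaves, and then to invoke the general $\i$-categorical principle that right Kan extension along a left adjoint is given by precomposition with its right adjoint.

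By Remark \ref{Spad def}, each adic spectrum $\mathrm{Spad}(A)$ is already an fpqc sheaf, so the functor $\mathrm{Spad}^\op : \CAlg_\mathrm{ad} \to \Fun(\CAlg_\mathrm{ad}, \mS)^\op$ factors canonically through the opposite of the inclusion $j : \Shv_\mathrm{fpqc}^\mathrm{ad} \hookrightarrow \Fun(\CAlg_\mathrm{ad}, \mS)$, as
$$
(\CAlg_\mathrm{ad})^\op \xrightarrow{\mathrm{Spad}_0^\op} (\Shv_\mathrm{fpqc}^\mathrm{ad})^\op \xrightarrow{j^\op} \Fun(\CAlg_\mathrm{ad}, \mS)^\op.
$$
By transitivity of right Kan extensions (which exist here since $\CAlg(\PrL)$ admits all small limits), setting $G := \mathrm{Ran}_{\mathrm{Spad}_0^\op} \Mod^{\mathrm{Cpl}}$, we obtain a canonical equivalence $\QCoh \simeq \mathrm{Ran}_{j^\op} G$.

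The sheafification adjunction $L^\mathrm{ad} \dashv j$ yields, upon passing to opposite $\i$-categories, the adjunction $j^\op \dashv (L^\mathrm{ad})^\op$. Since $j^\op$ is itself a left adjoint, the comma category $(j^\op \downarrow X)$ has a terminal object $((L^\mathrm{ad})^\op(X), \epsilon_X)$ supplied by the counit $j^\op \circ (L^\mathrm{ad})^\op \to \id$, and so the pointwise right Kan extension reduces to precomposition:
$$
\mathrm{Ran}_{j^\op} G \simeq G \circ (L^\mathrm{ad})^\op.
$$
Consequently, for every $X \in \Fun(\CAlg_\mathrm{ad}, \mS)$ there is a canonical equivalence $\QCoh(X) \simeq G(L^\mathrm{ad}(X))$. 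Applying this identity with $L^\mathrm{ad}(X)$ in place of $X$, and using that $L^\mathrm{ad}$ is idempotent on sheaves, we obtain $\QCoh(L^\mathrm{ad}(X)) \simeq G(L^\mathrm{ad}(X)) \simeq \QCoh(X)$, and unwinding the construction shows that this equivalence is precisely the one induced by the sheafification map $X \to L^\mathrm{ad}(X)$.

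The only obstacle is bookkeeping the direction of adjunctions after opposing $\i$-categories and confirming that the right Kan extensions in question are genuinely pointwise (which is automatic in $\CAlg(\PrL)$). Notably, no nontrivial descent theorem for complete modules along fpqc covers is invoked; the statement is extracted essentially formally from the defining right Kan extension property of $\QCoh$, combined with the reflective structure of sheaves inside presheaves.
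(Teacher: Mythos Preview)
Your argument contains a genuine gap in the step where you claim $\mathrm{Ran}_{j^\op} G \simeq G \circ (L^\mathrm{ad})^\op$. You correctly observe that $L^\mathrm{ad} \dashv j$ yields, on opposite categories, the adjunction $j^\op \dashv (L^\mathrm{ad})^\op$, so that $j^\op$ is a left adjoint. But precomposition reverses adjunctions: from $j^\op \dashv (L^\mathrm{ad})^\op$ one obtains $((L^\mathrm{ad})^\op)^* \dashv (j^\op)^*$, which says that $(-)\circ (L^\mathrm{ad})^\op$ is the \emph{left} adjoint to restriction along $j^\op$. In other words, what you have computed is $\mathrm{Lan}_{j^\op} G \simeq G \circ (L^\mathrm{ad})^\op$, not $\mathrm{Ran}_{j^\op} G$. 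Equivalently, the comma category relevant to the pointwise right Kan extension at $X$ is $(X \downarrow j^\op)$, not $(j^\op \downarrow X)$; it is only the latter that acquires a terminal object from the counit of $j^\op \dashv (L^\mathrm{ad})^\op$, and a terminal object in the indexing category simplifies a colimit, not a limit.

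This is not a minor bookkeeping slip that can be repaired. The identification $\mathrm{Ran}_{j^\op} G \simeq \mathrm{Lan}_{j^\op} G$ on all of $\Fun(\CAlg_\mathrm{ad}, \mS)^\op$ is \emph{equivalent} to the statement that $A \mapsto \Mod_A^{\mathrm{Cpl}(I)}$ satisfies fpqc descent: a limit-preserving functor out of $\Fun(\CAlg_\mathrm{ad}, \mS)^\op$ factors through $(\Shv_\mathrm{fpqc}^\mathrm{ad})^\op$ precisely when it inverts covering sieves, which is the sheaf condition on affines. Your closing remark that ``no nontrivial descent theorem for complete modules along fpqc covers is invoked'' is therefore the tell: the proposition \emph{is} that descent theorem. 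The paper proves it directly, showing via Barr--Beck--Lurie that for a faithfully flat map $A\to B$ of adic $\E$-rings the functor $M\mapsto (B\o_A M)^\wedge_J$ on complete modules is comonadic, by verifying conservativity and preservation of the relevant limits by hand. There is no formal shortcut around this.
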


\begin{proof}
Just like in the non-formal analogue of this result \cite[Proposition  6.2.3.1]{SAG}, this boils down to proving that the construction $A\mapsto \Mod_A^{\mathrm{Cpl}(I)}$ satisfies flat descent. That is, let $A\to B$ be a faithfully flat map of adic $\E$-rings, with ideals of definition $I\subseteq\pi_0(A)$ and $J\subseteq\pi_0(B)$. Without loss of generality, we may assume that $f(I)\subseteq J$ (else we replace $I$ by a sufficiently high power of it, which leaves the adic topology unchanged). In light of the Beck-Chevalley comonadic formulation of descent \cite[Section 4.7.5]{HA}, we must show that the adjunction
$$
(B\o_A -)^\wedge_J:\Mod_A^{\mathrm{Cpl}(I)}\rightleftarrows \Mod_B^{\mathrm{Cpl}(J)}
$$
of Construction \ref{formal pushpull} is comonadic. By the Barr-Beck-Lurie Comonadicity Theorem, it suffices to show that the left adjoint $M\mapsto (B\o_A M)^\wedge_J$ is conservative, and preserves all split totalizations.

In fact, we show it preserves all limits. The functor $B\otimes_A -:\Mod_A\to\Mod_B$ is comonadic, and as such preserves limits, on the account $A\mapsto\Mod_A$ satisfying flat descent \cite[Corollary  D.6.3.3]{SAG}. The inclusion $\Mod_A^{\mathrm{Cpl}(I)}\to\Mod_A$ is a left adjoint and hence also preserves limits. Finally we must show that the completion functor $(-)^\wedge_J  :\Mod_B\to \Mod_B^{\mathrm{Cpl}(J)}$ preserves limits. Since we already know that $\Mod_B\to\Mod_B^{\mathrm{Cpl}(J)}$ preserves limits, it suffices to show that $N\mapsto N^\wedge_J$  preserves limits as an endofunctor of the $\i$-category $\Mod_B$. To make that evident, we write it as a composite of limit-preserving functors
$$
N^\wedge_J\simeq \mathrm{cofib}(L_J(N)\to N)\simeq \Sigma(\mathrm{fib}(L_J(N)\to N));
$$
here suspension commutes with limits thanks to the stability of $\Mod_B$.

It remains to show that $M\mapsto (B\o_A M)^\wedge_J$ is conservative. Thanks to stability of the $\i$-categories in sight, that amounts to verifying that  an equivalence $(B\o_A M)^\wedge_J\simeq 0$ for an $I$-complete $A$-module $M$ implies that $M\simeq 0$. So suppose that $(B\o_A M)^\wedge_J\simeq 0$. Using the fiber sequence
$$
L_J(B\o_A M)\to B\o_A M\to (B\o_A M)^\wedge_J,
$$
in the $\i$-category $\Mod_B$, we conclude that the $B$-module $B\o_A M$ is $J$-local. In particular, since the ideal of $B$ generated by the image $f(I)$ is  by assumption contained in the ideal of definition $J$, this implies by \cite[Remark 7.2.4.7]{SAG} that $B\o_A M$  is $I$-local as an $A$-module. Therefore, we find using the compatibility of smashing with localization along finitely generated ideals \cite[Corollary 7.2.4.12]{SAG}, that
$$
B\o_A M\simeq L_I(B\o_A M)\simeq B\o_A L_I(M).
$$
This equivalence is induced from the canonical map $L_I(M)\to M$ by applying the functor $B\o_A-$. But since the latter is, again by  \cite[Corollary  D.6.3.3]{SAG}, comonadic, it is in particular conservative. Hence $L_I(M)\simeq M$, showing that $M$ is an $I$-local $A$-module. It is also $I$-complete by assumption, which is only possible if $M\simeq 0$.
\end{proof}

\subsection{Complete adic $\E$-rings and formal completion}
The preceding  discussion of quasi-coherent sheaves in the previous subsection concerned complete modules, yet we have so far not discussed any corresponding completeness assumption on adic $\E$-rings. We do so now, which leads us to a setup closer to what might rightfully be called formal (non-connective spectral) algebraic geometry, as opposed to the adic version that we have been discussing up to this point.

\begin{definition}
An adic $\E$-ring $A$ is \textit{complete} if it is $I$-complete, in the sense of Definition \ref{complete def}, for some (and therefore any) ideal of definition $I\subseteq\pi_0(A)$. Let $\CAlg_\mathrm{ad}^\mathrm{cpl}\subseteq\CAlg_\mathrm{ad}$ denote the full subcategory spanned by complete adic $\E$-rings.
\end{definition}

The functor of $I$-adic completion $A\mapsto A^\wedge_I$,  in the sense of \cite[Notation 7.3.1.5]{SAG},  provides a left adjoint to the subcategory inclusion $V:\CAlg_\mathrm{ad}^\mathrm{cpl}\subseteq\CAlg_\mathrm{ad}$. The adjunction $(-)^\wedge_-:\CAlg_\mathrm{ad}\rightleftarrows \CAlg_\mathrm{ad}^\mathrm{cpl}:V$ induces  adjunctions between copresheaf $\i$-categories
$$
\Fun(\CAlg_\mathrm{ad}, \mS)\overset{((-)^\wedge_-)_!}{\underset{((-)^\wedge_-)^*} \rightleftarrows}\Fun(\CAlg_\mathrm{ad}^\mathrm{cpl}, \mS) \overset{V_!}{\underset{V^*}\rightleftarrows}\Fun(\CAlg_\mathrm{ad}, \mS)
$$
through left Kan extension and precomposition. These two adjunctions are connected as
$$
V_!\dashv V^*\simeq ((-)^\wedge_-)_!\dashv ((-)^\wedge_-)^*,
$$
where the middle equivalence stems from the adjunction $(-)^\wedge_- \dashv V$ by abstract nonsense.

\begin{prop}
The functor $V_! : \Fun(\CAlg^{\mathrm{cpl}}_\mathrm{ad}, \mS)\to\Fun(\CAlg_\mathrm{ad}, \mS)$ is fully faithful.
\end{prop}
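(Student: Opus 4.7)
The plan is to invoke the general principle that, for any fully faithful functor $V : \mathcal C \to \mathcal D$ between small $\i$-categories, the induced left Kan extension $V_! : \Fun(\mathcal C, \mS) \to \Fun(\mathcal D, \mS)$ is again fully faithful. Since the inclusion $V : \CAlg_\mathrm{ad}^\mathrm{cpl} \subseteq \CAlg_\mathrm{ad}$ is by definition a fully faithful subcategory inclusion, this immediately yields the claim.

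To verify the general principle, I would translate full faithfulness of $V_!$ into the statement that the unit $\eta : \id \to V^* V_!$ of the adjunction $V_! \dashv V^*$ is an equivalence in $\Fun(\mathcal C, \mS)$. Evaluating at an object $A_0 \in \mathcal C$ and using the pointwise formula for left Kan extensions gives
$$
(V^* V_! F)(A_0) \simeq (V_! F)(V(A_0)) \simeq \varinjlim_{(V(A_0') \to V(A_0)) \in \mathcal C_{/A_0}} F(A_0').
$$
Full faithfulness of $V$ implies that the slice category indexing this colimit has $(A_0, \id_{A_0})$ as a terminal object, so the colimit collapses to $F(A_0)$, and the unit $\eta_F$ is an equivalence naturally in $F$.

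The only subtlety is to make sure the pointwise formula applies — but this is standard, as left Kan extensions along functors between small $\i$-categories into a presentable $\i$-category like $\mS$ always exist and are computed pointwise by the colimit over the comma category, as in \cite[Proposition 4.3.3.7]{HA}. No further input is needed, and there is no serious obstacle; the argument is purely formal once one notes that $V$ is fully faithful as a subcategory inclusion. In particular, the left adjoint structure of completion $(-)^\wedge_-$ plays no role in this statement, though the identification $V^* \simeq ((-)^\wedge_-)_!$ recorded just before the proposition gives a concrete alternative description of the right adjoint of $V_!$.
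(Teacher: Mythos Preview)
Your proof is correct and takes a slightly different, more general route than the paper. Both arguments show that the unit of the adjunction $V_! \dashv V^*$ is an equivalence, but you deduce this directly from the fact that $V$ is a fully faithful subcategory inclusion, via the pointwise colimit formula for left Kan extension. The paper instead exploits the additional structure that $V$ has a left adjoint $(-)^\wedge_-$: it writes $V^* \simeq ((-)^\wedge_-)_!$ and then computes $V^*\circ V_! \simeq ((-)^\wedge_- \circ V)_! \simeq \id_!$, using that completion is the identity on already-complete adic $\E$-rings. Your argument has the advantage of applying to any fully faithful $V$ regardless of whether a reflector exists, while the paper's approach makes the role of completion more visible and ties the result directly to the adjunction framework set up just before the proposition. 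Either way the content is formal; your remark that the left adjoint plays no essential role is accurate.
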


\begin{proof}
For any $I$-complete  $\E$-ring $A$, the canonical map $A\to A^\wedge_I$ is an equivalence.  As consequence, we find that the unit map $\id\to V^*\circ V_!\simeq ((-)^\wedge_-)_!\circ V_!\simeq ((-)^\wedge_-\circ V)_!$ of the adjunction $V_!\dashv V^*$ is an equivalence. But that is precisely equivalent to the left adjoint being fully faithful. 
\end{proof}

We view the functor $V_!$ as an implicit inclusion of functors $\CAlg_\mathrm{ad}^\mathrm{cpl}\to\mS$ into functors $\CAlg_\mathrm{ad}\to \mS$, and will as such often omit it from notation.

\begin{definition}\label{real def of formal completion}
Given a functor $X:\CAlg_\mathrm{ad}\to\mS$, we call the functor $X^\wedge:\CAlg_\mathrm{ad}^\mathrm{cpl}\to\mS$, given by restriction $X^\wedge :=X\vert_{\CAlg^\mathrm{cpl}_\mathrm{ad}},$ the \textit{completion of $X$}.
\end{definition}

\begin{exun}
For an adic $\E$-ring $A$, its \textit{formal spectrum} is defined to be the completion $\Spf(A) :=\mathrm{Spad}(A)^\wedge$ of its adic spectrum. It is clear that any map of adic $\E$-rings $A\to B$, which induces an equivalence on completions, also induces an equivalence $\Spf(A)\simeq \Spf(B)$. In particular, we have $\Spf(A)\simeq \Spf(A^\wedge_I)$ for all adic $\E$-rings $A$. But in $\Fun(\CAlg_\mathrm{ad}^\mathrm{cpl}, \mS)$, the latter functor is corepresentable by $A^\wedge_I$. The canonical fully faithful inclusion $V_!:\Fun(\CAlg_\mathrm{ad}^\mathrm{cpl}, \mS)\subseteq\Fun(\CAlg_\mathrm{ad}, \mS)$ therefore identifies the formal spectrum $\mathrm{Spf}(A)$ of an adic $\E$-ring $A$ with the adic spectrum  $\mathrm{Spad}(A^\wedge_I)$ of its completion $A^\wedge_I$.
\end{exun}

\begin{remark}
Rephrasing the definition, the  completion functor $(-)^\wedge :\Fun(\CAlg_\mathrm{ad}, \mS)\to\Fun(\CAlg_\mathrm{ad}^\mathrm{cpl}, \mS)$ is the functor $V^*$ discussed above. Since we have already seen that $V^*\simeq ((-)^\wedge_-)_!$, this means that completion admits an alternative description as a left Kan extension. Indeed, if we write $X\simeq \varinjlim_i \mathrm{Spad}(A_i)$ in $\Fun(\CAlg_\mathrm{ad}, \mS)$ (as we always can in a presheaf $\i$-category), then its completion admits colimit descriptions
$$X^\wedge\simeq \varinjlim_i \Spf(A_i)\simeq \varinjlim_i \mathrm{Spad}((A_i)^\wedge_{I_i}).$$
\end{remark}

\begin{remark}
Though we will not need such generality, the conclusion of Example \ref{first exun} holds when $X$ is a non-connective spectral Deligne-Mumford stack. Under the added assumption that $X$ is connective, this shows that our definition of formal completion in this case coincides with that of \cite[Definition 8.1.6.1]{SAG}.
\end{remark}

We finally come to our main object of interest in this foray into formal geometry: the formal completion. We define it by the just-introduced completion process from the adic pre-completion of Definition \ref{completion def}.

\begin{definition}\label{def fc}
Let $X$ be a non-connective spectral stack, and $K\to X^\heart$ be a map of ordinary stacks. The \textit{formal completion of $X$ along $K$} is defined to be  the completion $X^\wedge_K$ of the adic pre-completion $X_K$ of $X$ along $K$.
\end{definition}

\begin{remark}\label{fc explicit}
Unwinding the web of definitions, the value of the formal completion $X^\wedge_K : \CAlg_\mathrm{ad}^\mathrm{cpl}\to\mS$ at a complete adic $\E$-ring $A$ is given as the filtered colimit
$$
X^\wedge_K(A) := \varinjlim_{I\subseteq\pi_0(A)} X(A)\times_{X^\heart(\pi_0(A)/I)} K(\pi_0(A)/I),
$$
ranging over all  the ideals of definition $I\subseteq\pi_0(A)$. Unlike the adic pre-completion, in the case of which the above formula remains valid for all adic $\E$-rings, it is only valid for complete adic $\E$-rings in the case of the formal completion. As a functor $X^\wedge_K :\CAlg_\mathrm{ad}\to\mS$, the values on arbitrary adic $\E$-rings are harder to describe.
\end{remark}

\begin{remark}\label{fc is decompress}
Adic pre-completion is defined in Definition \ref{completion def} through decompression. An ordinary stack $X:\CAlg\to\mS$ appears in the definition of $X_K\in\Fun(\CAlg_\mathrm{ad}, \mS)$ left Kan extension along the inclusion of discretely topologized adic $\E$-rings $\CAlg\to\CAlg_\mathrm{ad}$, and said inclusion factors through the inclusion $\CAlg_\mathrm{ad}^\mathrm{cpl}\subseteq\CAlg_\mathrm{ad}$, it follows that $X^\wedge\simeq X$. Consequently there is a canonical equivalence
$$
X^\wedge_K\simeq X\times_{(X^\heart\circ \pi_0)} \big((X^\heart)^\wedge_{K^\heart}\circ\pi_0\big)
$$
in the $\i$-category $\Fun(\CAlg_\mathrm{ad}^\mathrm{cpl}, \mS)$, showing that formal completion is also defined via decompression.
\end{remark}

\begin{remark}
Our notion of formal completion (as well as adic pre-completion) is closely related to the relative de Rham space of \cite[Definition 18.2.1.1]{SAG}. Indeed, when restricted to the full subcategory $\CAlg\subseteq\CAlg_\mathrm{ad}^\mathrm{cpl}$ of $\E$-rings with the (always complete) discrete adic topology, and for $X$ a connective spectral stack, we have
$$
X^\wedge_K\vert_{\CAlg}\simeq (K/X)_\mathrm{dR}.
$$
However, we prefer to use the completion notation, in part because we find it  more geometrically evocative. But perhaps more importantly, the relative de Rham space notation $(K/X)_\mathrm{dR}$ suggests the existence of a map $K\to X$. Such a map indeed always exists in the connective case, where it may be obtained by composing the map $K\to X^\heart$ with the canonical map $X^\heart\to X$. But in the non-connective setting, $K$ and $X$ are no longer ``on the same footing'', which we believe is better reflected in the notation $X^\wedge_K$.
\end{remark}

Now that we have defined formal completion, our next step is to relate quasi-coherent sheaves on it to quasi-coherent sheaves with specified support that we had discussed a little while back. After some preliminary discussion in Remark \ref{completion as pullback}, that is encapsulated in Proposition \ref{all about formal QCoh}.

\begin{remark}\label{completion as pullback}
The counit $V_!\circ V^*\to \mathrm{id}$ of the adjunction $V_!\dashv V^*$ gives rise to a canonical map $X^\wedge\to X$ in the $\i$-category $\Fun(\CAlg_\mathrm{ad}, \mS)$ for any functor $X:\CAlg_\mathrm{ad}\to\mS$. In the special case when $X$ is a non-connective spectral stack and $K\subseteq X^\heart$, this may be combined with the canonical map $X_K\to X$ on the level of adic pre-completion, to obtain a canonical map $f:X^\wedge_K \to X$. This in turn induces a pullback-pushforward adjunction on quasi-coherent sheaves
\begin{equation}\label{adjunction formal pushpull}
f^* : \QCoh(X)\rightleftarrows \QCoh(X^\wedge_K): f_*.
\end{equation}
A special case of this was
discussed in Example \ref{examples of Modcpl}, namely the case that $X =\Spec(A)$ and $K=V(I)$ for a finitely generated radical ideal $I\subseteq\pi_0(A)$. In analogy to that special case, we in the general case still view $f_*$ as a forgetful functor and as such often suppress it from notation; somewhat justifiably, since it is fully faithful. Consequently, the pullback  $f^*$ is to be viewed as completion along $K$, and as such denoted $f^*(\sF) \simeq \sF^\wedge_K$. 
\end{remark}

\begin{prop}\label{all about formal QCoh}
Let $U\to X$ be a open immersion of geometric non-connective spectral stacks. Suppose that $X$ and the reduced closed substack $K :=X^\heart-U^\heart$ of finite presentation. Then:
\begin{enumerate}[label = (\alph*)]
\item The canonical functor $\QCoh(X^\wedge_K)\to \QCoh(X)$ is fully faithful. \label{jedan}
\item The fully faithful  embedding from \ref{jedan} exhibits a semi-orthogonal decomposition $(\QCoh(U), \QCoh(X^\wedge_K))$ of the stable $\i$-category $\QCoh(X)$. \label{dva}
\item The adjunction\label{tri}
$$
(-)^\wedge_K : \QCoh_K(X)\simeq\QCoh(X^\wedge_K) : \Gamma_K
$$
is a symmetric monoidal adjoint equivalence of $\i$-categories.
\end{enumerate}
\end{prop}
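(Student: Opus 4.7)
The strategy is to reduce everything to the affine case, where parts \ref{jedan}, \ref{dva}, and \ref{tri} become restatements of well-known semi-orthogonal decompositions of $\Mod_A$ recalled in Remark \ref{Greenlees-May duality} and Remark \ref{semi-orth loc-nilp}, and then globalize by descent. Concretely, since $X$ is geometric, I will choose a faithfully flat cover by an affine $\Spec(A)\to X$, form the \v{C}ech nerve $\Spec(A^{\otimes\bullet +1})\to X$ so that $X\simeq\varinjlim^{\Shv_\mathrm{fpqc}^\mathrm{nc}} \Spec(A^{\otimes \bullet+1})$, and pull back $K$ along each term, so $K_n := K\times_{X^\heart}\Spec(\pi_0(A^{\otimes n+1}))$ is cut out (by the finite presentation hypothesis) by a finitely generated ideal $I_n\subseteq\pi_0(A^{\otimes n+1})$. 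By Proposition \ref{informal charts}, the formal completion $X^\wedge_K$ is presented after completion as the colimit of the adic spectra $\mathrm{Spad}((A^{\otimes n+1})^\wedge_{I_n})$, and by Proposition \ref{QCoh don't care about sheafification} quasi-coherent sheaves are insensitive to sheafification, so I may compute
\[
\QCoh(X)\simeq\Tot\bigl(\Mod_{A^{\otimes \bullet+1}}\bigr),\qquad
\QCoh(X^\wedge_K)\simeq\Tot\bigl(\Mod_{A^{\otimes \bullet+1}}^{\mathrm{Cpl}(I_\bullet)}\bigr),
\]
and similarly $\QCoh(U)\simeq\Tot(\Mod_{A^{\otimes\bullet+1}}^{\mathrm{Loc}(I_\bullet)})$ and $\QCoh_K(X)\simeq\Tot(\Mod_{A^{\otimes\bullet+1}}^{\mathrm{Nil}(I_\bullet)})$ via Remarks \ref{semi-orth loc-nilp} and \ref{remark affine setting for nil}.

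On each term of the cosimplicial diagram, part \ref{jedan} is the tautology that $\Mod_A^{\mathrm{Cpl}(I)}\subseteq\Mod_A$ is a full subcategory (Definition \ref{complete def}); part \ref{dva} is precisely the nilpotent-complete-local story of Remarks \ref{semi-orth loc-nilp} and \ref{Greenlees-May duality}, stating that $(\Mod_A^{\mathrm{Loc}(I)},\Mod_A^{\mathrm{Cpl}(I)})$ is a semi-orthogonal decomposition of $\Mod_A$ with the complement identified via Remark \ref{semi-orth loc-nilp} with $\QCoh(U)$; and part \ref{tri} is the Greenlees-May duality equivalence $(-)^\wedge_I:\Mod_A^{\mathrm{Nil}(I)}\rightleftarrows\Mod_A^{\mathrm{Cpl}(I)}:\Gamma_I$ recorded in Remark \ref{Greenlees-May duality}. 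The symmetric monoidality in \ref{tri} at the affine level follows from the fact that the completed tensor product $\widehat{\o}_A$ on $\Mod_A^{\mathrm{Cpl}(I)}$ is, by construction (see \cite[Variant 7.3.5.6]{SAG}), obtained from $\o_A$ by applying the monoidal localization $(-)^\wedge_I$, which intertwines the symmetric monoidal structures on $\Mod_A^{\mathrm{Nil}(I)}$ and $\Mod_A^{\mathrm{Cpl}(I)}$ induced from $\Mod_A$.

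To globalize, I observe that each of these statements is preserved under totalization: a levelwise fully faithful map of cosimplicial $\infty$-categories induces a fully faithful map on totalizations, a levelwise semi-orthogonal decomposition gives a global semi-orthogonal decomposition (this I would deduce from the characterization of semi-orthogonal decompositions in \cite[Corollary 7.2.1.5]{SAG} combined with the base-change compatibility of $f_*, j_*, \Gamma_K, (-)^\wedge_K$ with pullback in the cosimplicial diagram, already established in Propositions \ref{support and qc pullback} and \ref{base-change of informal comp}), and a levelwise equivalence gives an equivalence on totalizations. Naturality across the cosimplicial diagram is the same compatibility of adjunctions: the pullback-pushforward adjunctions of Construction \ref{formal pushpull} and the \v{C}ech totalization interact well because completion satisfies flat descent for adic maps (as shown in the proof of Proposition \ref{QCoh don't care about sheafification}).

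The main obstacle will be precisely this last point: verifying that the semi-orthogonal decomposition assembles across the cosimplicial diagram. The subtlety is that formal completion does not behave well with respect to descent for arbitrary flat maps of complete adic $\E$-rings, so I cannot work entirely within $\CAlg_\mathrm{ad}^\mathrm{cpl}$; instead I must exploit that $\QCoh$ was defined in the larger setting $\CAlg_\mathrm{ad}$ (where completion does respect fpqc descent by the proof of Proposition \ref{QCoh don't care about sheafification}) and use that quasi-coherent sheaves are insensitive to completion, to transport the semi-orthogonal decomposition from the adic world to the complete adic world. Once this descent bookkeeping is carried out, the equivalence $\QCoh_K(X)\simeq\QCoh(X^\wedge_K)$ is essentially the global avatar of Greenlees-May duality, and the semi-orthogonal decomposition of $\QCoh(X)$ extends the one already established in Proposition \ref{semi-orthog decomp} by refining $\QCoh_K(X)$ to $\QCoh(X^\wedge_K)$.
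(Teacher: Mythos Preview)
Your proposal is correct and follows essentially the same approach as the paper: reduce to the affine case via a colimit presentation of $X$, identify $\QCoh(X^\wedge_K)$ as the corresponding totalization of $\Mod_{A_i}^{\mathrm{Cpl}(I_i)}$, and invoke the affine Greenlees-May duality and the local--complete semi-orthogonal decomposition termwise. The paper is slightly more economical in that it packages your ``main obstacle'' paragraph into Lemma~\ref{formal charts} (which combines Proposition~\ref{informal charts}, Proposition~\ref{QCoh don't care about sheafification}, and Lemma~\ref{QCoh don't care about completion}), so you could cite that directly rather than re-deriving the compatibility of sheafification and completion with $\QCoh$; and for part~\ref{jedan} the paper argues via the counit $f^*f_*\to\id$ being a limit of the affine counits, which are equivalences, rather than invoking ``levelwise fully faithful implies fully faithful on totalizations'' --- but these are cosmetic differences.
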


The proof of Proposition \ref{all about formal QCoh} requires some preliminaries regarding the behavior of quasi-coherent sheaves in the setting of functors on complete adic $\E$-rings.

\begin{lemma}\label{QCoh don't care about completion}
For any $X\in\Fun(\CAlg_\mathrm{ad}, \mS)$, the adjunction $\QCoh(X)\rightleftarrows\QCoh(X^\wedge)$, induced by the canonical map $X^\wedge\to X$ on quasi-coherent sheaves, is an adjoint equivalence of $\i$-categories.
\end{lemma}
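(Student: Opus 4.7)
My plan is to reduce to the affine case and then verify that the pullback-pushforward adjunction attached to a completion map of adic $\E$-rings is an equivalence. Using the Yoneda embedding, write $X\simeq \varinjlim_i \mathrm{Spad}(A_i)$ as a colimit of representables in $\Fun(\CAlg_\mathrm{ad}, \mS)$. Applying $V^*$ yields $X^\wedge\simeq \varinjlim_i \Spf(A_i)$, and using the identification $V_!\Spf(A_i)\simeq \mathrm{Spad}((A_i)^\wedge_{I_i})$ from the preceding example, the canonical map $V_!(X^\wedge)\to X$ is exhibited as the colimit of the componentwise completion maps $\mathrm{Spad}((A_i)^\wedge_{I_i})\to\mathrm{Spad}(A_i)$ induced by $A_i\to (A_i)^\wedge_{I_i}$. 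Definition~\ref{formal QCoh} turns these presheaf colimits into limits of presentable $\i$-categories, so the adjunction $\QCoh(X)\rightleftarrows\QCoh(X^\wedge)$ is the limit of the componentwise adjunctions $f^*\dashv f_*$ from Construction~\ref{formal pushpull} associated to each completion map $f:A\to A^\wedge_I$. It therefore suffices to show that the adjunction $(A^\wedge_I\otimes_A-)^\wedge_I:\Mod_A^{\mathrm{Cpl}(I)}\rightleftarrows\Mod_{A^\wedge_I}^{\mathrm{Cpl}(I)}:f_*$ is an equivalence.

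The heart of this affine case is the completion identity $M^\wedge_I\simeq (A^\wedge_I\otimes_A M)^\wedge_I$, valid for every $A$-module $M$. I would establish it by tensoring the fiber sequence $\mathrm{fib}(A\to A^\wedge_I)\to A\to A^\wedge_I$ with $M$ and applying $(-)^\wedge_I$: the module $\mathrm{fib}(A\to A^\wedge_I)$ has vanishing $I$-completion, since completion is idempotent, and hence belongs to $\Mod_A^{\mathrm{Loc}(I)}$; by the smashing property of $L_I$ (\cite[Corollary~7.2.4.12]{SAG}), tensoring with $M$ preserves this class, so the left term of the completed sequence vanishes. Given this identity, for $M\in\Mod_A^{\mathrm{Cpl}(I)}$ the unit $M\to f_*f^*(M)\simeq M^\wedge_I\simeq M$ is an equivalence, and for $N\in\Mod_{A^\wedge_I}^{\mathrm{Cpl}(I)}$ (automatically $I$-complete as an $A$-module via $f_*$) the counit $f^*f_*(N)\simeq N^\wedge_I\simeq N$ is likewise an equivalence.

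The main obstacle is pinning down the completion identity above, together with the (mild but necessary) bookkeeping that the adic topology on $A^\wedge_I$ inherited from $A$ turns $f:A\to A^\wedge_I$ into a morphism in $\CAlg_\mathrm{ad}$ for which Construction~\ref{formal pushpull} produces precisely the adjunction coming from the map $V_!\Spf(A)\to\mathrm{Spad}(A)$. Once these compatibilities are established, the componentwise equivalences pass to the limit of presentable $\i$-categories to yield the claimed equivalence $\QCoh(X)\simeq\QCoh(X^\wedge)$.
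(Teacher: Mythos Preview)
Your proposal is correct and follows the same reduction-to-affines strategy as the paper's proof. The only difference is packaging: the paper dispatches the affine case in one line by invoking \cite[Proposition~7.3.5.1]{SAG} to conclude that the unit and counit of $A^\wedge_I\widehat{\otimes}_A-:\Mod_A^{\mathrm{Cpl}(I)}\rightleftarrows\Mod_{A^\wedge_I}^{\mathrm{Cpl}(I)}$ are equivalences, whereas you reprove that proposition by hand via the completion identity $M^\wedge_I\simeq (A^\wedge_I\otimes_A M)^\wedge_I$ and the smashing property of $L_I$. Your unpacking is sound (note that your claim ``$\mathrm{fib}(A\to A^\wedge_I)$ has vanishing $I$-completion'' is most directly seen by recognizing this fiber as $L_I(A)$ via the local--complete cofiber sequence of Remark~\ref{Greenlees-May duality}), so the two proofs are essentially identical in content.
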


\begin{proof}
Recall that both functors $(-)^\wedge$ and $\QCoh$ are defined as Kan extensions of the functors on adic $\E$-rings $A\mapsto A^\wedge_I$ and $A\mapsto \Mod_A^{\mathrm{Cpl}(I)}$ respectively. It therefore suffices to show that the canonical adic $\E$-ring map $A\to A^\wedge_I$ induces an equivalence upon $I$-complete modules for any adic $\E$-ring $A$ and ideal of definition $I\subseteq\pi_0(A)$. The adjunction it induces on complete modules is 
$$
A^\wedge_I\widehat{\o}_A-:\Mod_A^{\mathrm{Cpl}(I)}\rightleftarrows \Mod_{A^\wedge_I}^{\mathrm{Cpl}(I)},
$$
and  its unit and counit are equivalences  by an application of \cite[Proposition 7.3.5.1]{SAG}.
\end{proof}

\begin{lemma}\label{formal charts}
With $X\simeq \varinjlim_i\Spec(A_i)$ and $K\subseteq X^\heart$ as in the setting of Proposition \ref{informal charts},  the canonical  map $\varinjlim_i\Spf(A_i)\to X^\wedge_K$ induces upon quasi-coherent sheaves an equivalence of $\i$-categories $\QCoh(X^\wedge_K)\simeq \varprojlim_i\Mod_{A_i}^{\mathrm{Cpl}(I_i)}.$
\end{lemma}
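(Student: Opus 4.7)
The idea is to combine Proposition \ref{informal charts}, which controls the adic pre-completion $X_K$ up to fpqc sheafification, with the fact that the functor $\QCoh$ is insensitive to both sheafification (Proposition \ref{QCoh don't care about sheafification}) and completion (Lemma \ref{QCoh don't care about completion}). In other words, all the heavy lifting has essentially already been done; the task is to package it correctly.

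First I would identify the canonical map in the statement. Since the completion functor $(-)^\wedge = V^* : \Fun(\CAlg_\mathrm{ad}, \mS) \to \Fun(\CAlg^\mathrm{cpl}_\mathrm{ad}, \mS)$ is simultaneously a left adjoint (to $V_!$) and a right adjoint (to $V_*$), it preserves all colimits. Applying it to the canonical map $\varinjlim_i^{\Fun(\CAlg_\mathrm{ad}, \mS)} \mathrm{Spad}(A_i) \to X_K$ of Proposition \ref{informal charts} therefore produces exactly the map $\varinjlim_i \Spf(A_i) \to X^\wedge_K$ appearing in the statement.

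With this in hand, I would string together the chain
$$
\QCoh(X^\wedge_K) \simeq \QCoh(X_K) \simeq \QCoh(L^\mathrm{ad}(X_K)) \simeq \QCoh(L^\mathrm{ad}(\varinjlim_i \mathrm{Spad}(A_i))) \simeq \QCoh(\varinjlim_i \mathrm{Spad}(A_i)) \simeq \varprojlim_i \Mod_{A_i}^{\mathrm{Cpl}(I_i)},
$$
where the first equivalence is Lemma \ref{QCoh don't care about completion} applied to $Y = X_K$, the second and fourth use Proposition \ref{QCoh don't care about sheafification}, the third is the identification of fpqc sheafifications furnished by Proposition \ref{informal charts}, and the last is the defining right Kan extension property of $\QCoh$ from Definition \ref{formal QCoh}. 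The main (and only mildly delicate) thing to check is that this composite equivalence is indeed induced by the canonical map in the statement; this reduces to naturality of each individual step, combined with the identification of the map in the preceding paragraph as the completion of the map of Proposition \ref{informal charts}.
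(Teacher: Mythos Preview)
Your proof is correct and follows essentially the same approach as the paper: use Proposition \ref{informal charts} to identify the fpqc sheafifications, then invoke Proposition \ref{QCoh don't care about sheafification} and Lemma \ref{QCoh don't care about completion} to pass between $X_K$, its sheafification, and its completion without changing $\QCoh$. The paper's argument is just a more compressed version of your chain of equivalences.

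One small slip: you have the adjunction directions reversed. The completion functor $V^*$ is the \emph{right} adjoint to $V_!$ and the \emph{left} adjoint to $V_*$ (equivalently, $V^* \simeq ((-)^\wedge_-)_!$ is itself a left Kan extension, hence a left adjoint to $((-)^\wedge_-)^*$). Your conclusion that $V^*$ preserves colimits is still correct for this reason, so the argument is unaffected.
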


\begin{proof}
We already know from Proposition \ref{informal charts} that the canonical map $f:\varinjlim_i\mathrm{Spad}(A_i)\to X_K$ induces an equivalence upon sheafification. It follows from Proposition \ref{QCoh don't care about sheafification} that $f$ also induces an equivalence on $\QCoh$. The canonical map that we are looking at here $f^\wedge:\varinjlim_i \Spf(A_i)\to X^\wedge_K$ is obtained from $f$ by completion. Since this also leaves quasi-coherent sheaves unchanged by Lemma \ref{QCoh don't care about completion}, the result follows.
\end{proof}

\begin{proof}[Proof of Proposition \ref{all about formal QCoh}]
For \ref{jedan}, we are asserting that the right adjoint of the adjunction \eqref{adjunction formal pushpull} of Remark \ref{completion as pullback} is fully faithful. That is equivalent to the counit $f^*f_*\to \mathrm{id}$ being an equivalence in the $\i$-category $\QCoh(X^\wedge_K)$. It follows from Lemma \ref{formal charts} that the adjunction \eqref{adjunction formal pushpull} is obtained by passage to the limit from the diagram of adjunctions
$$
(-)^\wedge_{I_i}:\Mod_{A_i}\rightleftarrows \Mod_{A_i}^{\mathrm{Cpl}(I_i)}.
$$
The counits of these are clearly equivalences, so the same holds for \eqref{adjunction formal pushpull}. That concludes the proof of \ref{jedan}.

The adjunction in question for \ref{tri} is obtained as the composite of two adjunctions
$$
\QCoh_K(X) \underset{\,\,\Gamma_K}\rightleftarrows\QCoh(X) \overset{(-)^\wedge_K}{\rightleftarrows} \QCoh(X_K^\wedge).
$$
Just like before, it follows that the above adjunction is obtained by passage to the limit from the diagram of adjunctions of $\i$-categories
$$
\Mod_{A_i}^{\mathrm{Nil}(I_i)}\underset{\,\,\Gamma_{I_i}}\rightleftarrows \Mod_{A_i} \overset{(-)^\wedge_{I_i}}\rightleftarrows \Mod_{A_i}^{\mathrm{Cpl}(I_i)}.
$$
As observed in Remark \ref{Greenlees-May duality}, this composite adjunction is an adjoint equivalence by  \cite[Proposition 7.3.1.7]{SAG}. It follows that the adjunction $(-)^\wedge_K \dashv \Gamma_K$ is also an adjoint equivalence, and since its left adjoint is symmetric monoidal by design, it is also an equivalence of symmetric monoidal $\i$-categories.

The proof of \ref{dva} is the same kind of reduction to the (formally) affine case,  and invocation of Remark \ref{Greenlees-May duality}, or more precisely, of \cite[Proposition 7.3.1.4]{SAG}.
\end{proof}

\begin{remark}[Fracture square]
Point \ref{dva} of Proposition \ref{all about formal QCoh} implies that $(\sF\vert_U)^\wedge_K\simeq 0$ for any quasi-coherent sheaf $\sF$ on $X$. On the other hand, the diagram
$$
\begin{tikzcd}
\sF\ar{r}{} \ar{d}{} & \sF^\wedge_K \ar{d}{}\\
\sF\vert_U\ar{r}{} & (\sF^\wedge_K)\vert_U,
\end{tikzcd}
$$
is a pullback square in the $\i$-category $\QCoh(X)$, which may be seen, through a reduction to affines as in the proof of Proposition \ref{all about formal QCoh}, either by appealing to \cite[Remark 7.3.1.6]{SAG}, or by repeating the same argument as is given there to prove it. Indeed, consider the fiber sequence
$$
\sG\to \sF\to \sF\vert_U\times_{(\sF^\wedge_K)\vert_U}\sF^\wedge _K
$$
in $\QCoh(X)$, given by the commutativity of the above diagram. Both the completion and restriction functor $\sF\mapsto \sF^\wedge_K$ and $\sF\mapsto \sF\vert_U$ preserve finite limits. Applying restriction to $U$ to the above fiber sequence
$$
\sG\vert_U\simeq \fib(\sF\vert_U\to \sF\vert_U\times_{(\sF^\wedge_K)\vert_U}(\sF^\wedge_K)\vert_U)\simeq 0.
$$
Because $(\sF\vert_U)^\wedge_K\simeq 0$ is a basic consequences of the semi-direct decomposition of quasi-coherent sheaves into local and complete quasi-coherent sheaves, we also find that
\begin{eqnarray*}
\sG^\wedge_K
&\simeq&
\fib(\sF^\wedge_K\to (\sF\vert_U)^\wedge_K\times_{((\sF^\wedge_K)\vert_U)^\wedge_K}\sF^\wedge _K)\\
&\simeq&
\fib(\sF^\wedge_K\to 0\times_0\sF^\wedge _K)\\
&\simeq& 0. 
\end{eqnarray*}
Consequently $\sG\in \QCoh(U)\cap\QCoh(X_K^\wedge)$ and so $\sG\simeq 0$, showing that the above  commutative square is indeed Cartesian.
\end{remark}

\section{The chromatic filtration on the stack of oriented formal groups}\label{Section 3}

In this sections, we will specialize the discussion of open substacks, quasi-coherent sheaves with fixed support, and formal completions, to the special case of the non-connective spectral stack $\M$ - the primary object of interest of this paper.

\begin{remark}
We will mostly work $(p)$-locally in this section. That means that we will be using the $\E$-ring $(p)$-completion map $S\to S_{(p)}$ (resp.\ the ordinary commutative ring $(p)$-completion map $\mathbf Z\to\mathbf Z_{(p)}$) to base-change certain non-connective spectral stacks (resp.\ ordinary stacks) $X$ as $X\o  S_{(p)}$ (resp.\ $X\o_{\mathbf Z}\mathbf Z_{(p)}$), obtaining in this way stacks defined over the base $\Spec(S_{(p)})$. Thanks to    \cite[Remark 1.1.4.2]{SAG}, the $(p)$-localization map $S\to S_{(p)}$ (and hence $\mathbf Z\to \mathbf Z_{(p)}$ also) is \'etale, so the outlined $(p)$-localization procedure falls into the scope of Remark \ref{remark about etale base-change}. In particular, it is yet another instance of decompression.
\end{remark}

\subsection{The height filtration on $\mathcal M_\mathrm{FG}^\heart$}

Our goal necessitates first a brief review of the reduced closed and open substacks of th ordinary stack of formal groups $\mathcal M_\mathrm{FG}^\heart$. More precisely, we fix throughout a prime $p$, and consider the reduced closed and open substacks of the $(p)$-localization $\mathcal M_\mathrm{FG}^\heart\o_{\mathbf Z}\mathbf Z_{(p)}$. That is precisely the theory of heights of formal groups.

\begin{definition}[{\cite[Definition 4.4.1]{Elliptic 2}}]
Fix a prime $p$ and an integer $n\ge 1$.
A formal group $\w{\G}$ over a commutative ring $R$ \textit{has height $\ge n$} if and only if $p=0$ in $R$, and the  $p$-power map with respect to the group operation $[p] :\w{\G}\to \w{\G}$ factors through $n$ stages of the relative Frobenius tower
$$
\w{\G}\xrightarrow{\varphi_{\w{\G}}} \w{\G}^{(p)}\xrightarrow{\varphi_{\w{\G}^{(p)}}} \w{\G}^{(p^2)}\xrightarrow{\varphi_{\w{\G}^{(p^2)}}} \w{\G}^{(p^3)}\to\cdots.
$$
\end{definition}

\begin{remark}\label{Landideal}
By \cite[Proposition 4.4.10]{Elliptic 2}, the condition that $\w{\G}$ has height $\ge n$ is closed, determining a closed locus of $\Spec(R)$. The ideal $\mathfrak I^{\w{\G}}_n\subseteq R$ that cuts out this locus is called  the \textit{$n$-th Landweber ideal of $\w{\G}$}. It is always finitely generated, and when the formal group $\w{\G}$ is presented by a $p$-typical formal group law, takes on the familiar form
$
\mathfrak I^{\w{\G}}_n = (p, v_1,  \ldots, v_{n-1}),
$
with each generator $v_i$ only defined modulo the previous ones.
\end{remark}

The notion of the height of a formal group is fundamental to the geometry of the stack $\mathcal M^\heart_\mathrm{FG}$. This is the content of the following theorem, which is in effect only a reinterpretation of Landweber and Morava's invariant prime ideal theorem \cite[Theorem 3.3.6]{Orange Book}. But for a statement in terms of stacks and a direct proof, see \cite[Theorem 5.13]{Goerss}.

\begin{theorem}[Landweber, Morava]
Ranging over all\footnote{Here  $\mathcal M_{\mathrm{FG}}^{\heart, \ge \infty} =\bigcap_{n\ge 1} \mathcal M_{\mathrm{FG}}^{\heart, \ge n}$ , is the ordinary stack of formal groups of infinite height (e.g.\ the additive formal group). This stack has some rather unsavory properties (e.g.\ it is not geometric) and does not fit so nicely into the chromatic paradigm. As such, we will be disregarding it in all remaining discussions.} $1\le  n\le \i$,
the collection of closed substacks $\mathcal M_{\mathrm{FG}}^{\heart, \ge n}\subseteq\mathcal M^\heart_\mathrm{FG}\o_{\mathbf Z}\mathbf Z_{(p)}$ of formal groups of height $\ge n$, is the complete list of reduced closed substacks of the ordinary stack of formal groups $\mathcal M^\heart_\mathrm{FG}\o_{\mathbf Z}\mathbf Z_{(p)}$.
\end{theorem}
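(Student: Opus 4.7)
The plan is to reduce the question to an algebraic classification on a flat affine presentation, and then invoke the classical Landweber--Morava invariant prime ideal theorem.

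First I would choose a convenient flat presentation of the stack. After $(p)$-localization, the $p$-typical formal group law provides a faithfully flat map $\Spec(BP_*) \to \mathcal M^\heart_\mathrm{FG}\otimes_{\mathbf Z}\mathbf Z_{(p)}$, so that the stack is the quotient of the groupoid scheme $(\Spec(BP_*), \Spec(BP_*BP))$. By (smooth, hence) flat descent for reduced closed substacks (see Remark \ref{etale decompression}), reduced closed substacks of $\mathcal M^\heart_\mathrm{FG}\otimes_{\mathbf Z}\mathbf Z_{(p)}$ correspond bijectively to reduced closed subschemes of $\Spec(BP_*)$ whose two preimages in $\Spec(BP_*BP)$ agree, i.e.\ to radical ideals $J \subseteq BP_* = \mathbf Z_{(p)}[v_1, v_2, \ldots]$ that are invariant under the $BP_*BP$-coaction.

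Next I would invoke the Landweber--Morava invariant prime ideal theorem, in the form cited in the excerpt (Ravenel, \cite[Theorem 3.3.6]{Orange Book}), which states that the only invariant prime ideals of $BP_*$ are the ideals
\[
I_0 = (0),\quad I_1 = (p),\quad I_2 = (p,v_1),\quad \ldots,\quad I_n = (p,v_1,\ldots,v_{n-1}),\quad \ldots,\quad I_\infty = (p,v_1,v_2,\ldots).
\]
Every radical ideal is an intersection of the primes containing it; an invariant radical ideal is in particular an intersection of invariant primes (this follows since the set of invariant primes containing a given invariant ideal inherits invariance and their intersection is the nilradical of the quotient). Since the invariant primes $\{I_n\}_{0\le n\le \infty}$ form a totally ordered chain under inclusion, every such intersection is again one of the $I_n$. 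Therefore the invariant radical ideals of $BP_*$ are exactly the Landweber ideals $I_n$ for $0 \le n \le \infty$.

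Finally I would match these up with the substacks $\mathcal M^{\heart,\ge n}_\mathrm{FG}$. By Remark \ref{Landideal}, the $n$-th Landweber ideal of the universal $p$-typical formal group law is precisely $I_n$, and the closed subscheme it cuts out of $\Spec(BP_*)$ is the locus of formal group laws of height $\ge n$. Descending along the flat cover identifies the corresponding reduced closed substack of $\mathcal M^\heart_\mathrm{FG}\otimes_{\mathbf Z}\mathbf Z_{(p)}$ with $\mathcal M^{\heart, \ge n}_\mathrm{FG}$. Running $n$ from $1$ to $\infty$ (noting that $I_0 = (0)$ corresponds to the entire stack, which is reduced but not a proper closed substack worth listing, while $I_\infty$ gives the additive-height substack) exhausts the list. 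The main technical obstacle, beyond invoking the invariant prime ideal theorem as a black box, is the descent step translating invariant radical ideals on the flat cover into genuine reduced closed substacks of the stack; for this I would lean on \cite[Section 5.2]{Goerss} as pointed to in Remark \ref{etale decompression}.
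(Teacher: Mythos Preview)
The paper does not prove this theorem; it is quoted as background and attributed to the references \cite[Theorem 3.3.6]{Orange Book} and \cite[Theorem 5.13]{Goerss}. So there is no in-paper proof to compare against, and your overall strategy---pass to the flat cover $\Spec(BP_*)$, translate reduced closed substacks into invariant radical ideals, and invoke the invariant prime ideal theorem---is exactly the route those references take.

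There is, however, a genuine gap in the deduction. You assert that ``an invariant radical ideal is in particular an intersection of invariant primes,'' with the parenthetical that the intersection of invariant primes containing $J$ computes the nilradical. This is not correct in general: for a flat Hopf algebroid (or a group action), minimal primes over an invariant ideal need not be invariant, and there may be invariant radical ideals not expressible as intersections of invariant primes at all. A tiny example: let $\mathbf Z/2$ act on $k\times k$ by swapping factors; the zero ideal is invariant and radical, but there are no invariant prime ideals whatsoever, so the empty intersection gives the unit ideal, not $(0)$. In the $BP_*$ case the claim happens to be true, but only because every invariant radical ideal already \emph{is} one of the invariant primes $I_n$---which is precisely the statement you are trying to prove. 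So as written the argument is circular.

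The fix is to run the same induction that proves the invariant prime ideal theorem, but for radical ideals: given a nonzero invariant radical ideal $J\subseteq BP_*/I_n$, pick a nonzero element of minimal internal degree; its image under the coaction must be primitive, and the primitives of $BP_*/I_n$ are exactly $\mathbf F_p[v_n]$, so $J$ contains a power of $v_n$ and hence (being radical) $v_n$ itself. Iterating gives $J=I_m$ for some $m$. This is the argument carried out in \cite[Theorem 5.13]{Goerss}; once you replace your intersection-of-invariant-primes step with this direct primitive-element argument, the rest of your proof stands.
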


\begin{definition} The \textit{stack of formal groups of height $\le n$} is defined to be the open complement
$\mathcal M^{\heart, \le n}_\mathrm{FG} :=\mathcal M_\mathrm{FG}^\heart\o_{\mathbf Z}\mathbf Z_{(p)} - \mathcal M_\mathrm{FG}^{\heart, \ge n+1}$   in $\mathcal M_\mathrm{FG}^\heart\o_{\mathbf Z}\mathbf Z_{(p)}$ of the reduced closed substack $\mathcal M_\mathrm{FG}^{\heart, \ge n+1}$ of formal groups of height $\ge n+1$. 
\end{definition}

The ordinary stack $\mathcal M^{\heart, \le n}_\mathrm{FG}$ (as always implicitly also depending on the choice of $p$) by definition classifies formal groups \textit{of height $\le n$} - for an equivalent definition of the latter, see \cite[Definition 4.4.16.]{Elliptic 2}.
We thus obtain  a decreasing chain of closed substacks 
$$
\mathcal M_\mathrm{FG}^\heart\o_{\mathbf Z}\mathbf Z_{(p)}\supseteq
\mathcal M_\mathrm{FG}^{\heart, \ge 1}\supseteq
\mathcal M_\mathrm{FG}^{\heart, \ge 2}\supseteq
\cdots \supseteq
\mathcal M_\mathrm{FG}^{\heart, \ge n-1}\supseteq
\mathcal M_\mathrm{FG}^{\heart, \ge n}\supseteq
\cdots
,
$$
and a complementary increasing chain of open substacks
\begin{equation}\label{height filtration on heart}
\mathcal M^{\heart, \le 1}_\mathrm{FG}\subseteq
\mathcal M^{\heart, \le 2}_\mathrm{FG}\subseteq
\cdots\subseteq
\mathcal M^{\heart, \le n-1}_\mathrm{FG}\subseteq
\mathcal M^{\heart, \le n}_\mathrm{FG}\subseteq
\cdots\subseteq
\mathcal M^\heart_\mathrm{FG}\o_{\mathbf Z}\mathbf Z_{(p)}.
\end{equation}

\begin{definition}
The \textit{stack of formal groups of exact height $n$} is defined to be the stratum of either of the above filtrations, which is to say as
\begin{eqnarray*}
\mathcal M^{\heart, =n}_\mathrm{FG} &:=& \mathcal M^{\heart,\ge n}_\mathrm{FG}\times_{\mathcal M_\mathrm{FG}^\heart\o_{\mathbf Z}\mathbf Z_{(p)}}\mathcal M^{\heart,\le n}_\mathrm{FG} \\
&\simeq& \mathcal M^{\heart, \le n}_\mathrm{FG} - \mathcal M^{\heart, \le n-1}_\mathrm{FG} \\
&\simeq& \mathcal M^{\heart, \ge n}_\mathrm{FG} - \mathcal M^{\heart, \ge n+1}_\mathrm{FG},
\end{eqnarray*}
and is a reduced closed substack of $\mathcal M^{\heart, \le n}_\mathrm{FG}$.
\end{definition}

\begin{remark}\label{Landscription}
As explained in \cite[Remark 4.4.18]{Elliptic 2},
in terms of the Landweber ideals, a formal group $\w{\G}$ over a commutative ring $R$ has height $\le n$ if $\mathfrak I^{\w{\G}}_{n+1} = R,$ and has exact height $n$ if additionally $\mathfrak I^{\w{\G}}_{n} = (0)$.
\end{remark}

\begin{remark}\label{def height n}
Though the complement formula $\mathcal M^{\heart, =n}_\mathrm{FG} = \mathcal M^{\heart, \le n}_\mathrm{FG} - \mathcal M^{\heart, \le n-1}$ holds by definition on the level of stacks, it is not necessarily true that the the canonical inclusion $\mathcal M^{\heart, =n}_\mathrm{FG}(R)\subseteq\mathcal M^{\heart, \le n}_\mathrm{FG}(R)-\mathcal M_\mathrm{FG}^{\heart, \le n-1}(R)$ is bijective for any commutative ring $R$ (though it is the case if $R$ is a field). That is to say, there might be formal groups $\w{\G}$ over $R$ which are of height $\le n$, not of height $\le n-1$, and yet do not have exact height $n$. We say that such formal groups \textit{have height $n$}. In terms of Landweber ideals, height $n$ means that $\mathfrak I_{n+1}^{\w{\G}}=R$ and $\mathfrak I _n^{\w{\G}}\subsetneq R$, whereas for exact height $n$, the second requirement is $\mathfrak I^{\w{\G}}_{n} = (0)$ instead.
\end{remark}

For every $n\ge 1$, over a separably closed field $\kappa$ of characteristic $p$, there exits by Lazard's Classification Theorem \cite[Lecture 13, Theorem 10]{Lurie Chromatic} or  \cite[Corollary 15.4]{Pstragowski} a height $n$-formal group $\w{\G}_0$, and it is unique up to isomorphism. That is to say, the stack $\mathcal M_\mathrm{FG}^{\heart, =n}$ has a single geometric point. This allows us by \cite[Theorem 4.3.8]{Smithling}, {\cite[Lecture 19, Proposition 1]{Lurie Chromatic}},  \cite[Theorem 17.9]{Pstragowski} or \cite[5.36 Theorem]{Goerss} to exhibit the stack of formal groups of exact height $n$ as a quotient stack in $\kappa$-schemes under the action of the automorphism group scheme $\underline{\mathrm{Aut}}(\w{\G}_0) = \Spec(\kappa)\times_{\mathcal M_{\mathrm{FG}}^\heart}\Spec(\kappa)$:

\begin{theorem}\label{orbit picture at height n}
A map of ordinary stacks $\Spec(\overline{\mathbf F}_p)\to\mathcal M^{\heart, =n}_\mathrm{FG}$, classifying a formal group $\w{\G}_0$ of exact height $n$ over $\overline{\mathbf F}_p$, induces an equivalence $\mathcal M_\mathrm{FG}^{\heart, =n}\simeq \Spec(\overline{\mathbf F}_p)/\underline{\mathrm{Aut}}(\w{\G}_0)$.
\end{theorem}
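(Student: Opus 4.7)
The plan is to verify that the map $f\colon \Spec(\overline{\mathbf{F}}_p) \to \mathcal{M}^{\heart, =n}_\mathrm{FG}$ classifying $\w{\G}_0$ is an fpqc cover, then invoke standard stack-theoretic descent to express $\mathcal{M}^{\heart, =n}_\mathrm{FG}$ as the groupoid quotient by the associated equivalence relation, and finally to identify that equivalence relation with the automorphism group scheme $\underline{\mathrm{Aut}}(\w{\G}_0)$.

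First, I would establish that $f$ is a cover. Given any $\mathbf{F}_p$-algebra $R$ carrying an exact height $n$ formal group $\w{\G}$, corresponding to a map $g\colon \Spec(R) \to \mathcal{M}^{\heart, =n}_\mathrm{FG}$, Lazard's Classification Theorem shows that the base change of $\w{\G}$ to any algebraically closed residue field of $R$ is isomorphic to the corresponding pullback of $\w{\G}_0$. To upgrade this to a statement over a suitable cover of $R$, I would argue that after extending scalars to $R \otimes_{\mathbf{F}_p} \overline{\mathbf{F}}_p$, the isomorphism sheaf $\mathrm{Isom}(\w{\G}_0, \w{\G})$ is an $\underline{\mathrm{Aut}}(\w{\G}_0)$-torsor which, being trivial on all geometric fibers and taking place over a pro-\'etale group scheme, admits a section after a further faithfully flat extension. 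This produces the required fpqc-local factorization of $g$ through $f$.

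Next, standard stack-theoretic descent gives a canonical equivalence
\[
\mathcal{M}^{\heart, =n}_\mathrm{FG} \simeq \big[\Spec(\overline{\mathbf{F}}_p)/P\big],
\]
where $P := \Spec(\overline{\mathbf{F}}_p) \times_{\mathcal{M}^{\heart, =n}_\mathrm{FG}} \Spec(\overline{\mathbf{F}}_p)$ is the associated groupoid object. Since $\mathcal{M}^{\heart, =n}_\mathrm{FG} \hookrightarrow \mathcal{M}^\heart_\mathrm{FG}$ is a locally closed immersion and in particular a monomorphism of stacks, $P$ coincides with $\Spec(\overline{\mathbf{F}}_p) \times_{\mathcal{M}^\heart_\mathrm{FG}} \Spec(\overline{\mathbf{F}}_p)$, which by the definition given in the theorem statement equals $\underline{\mathrm{Aut}}(\w{\G}_0)$. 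Combining the two identifications will yield the claimed equivalence.

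The main obstacle will be the fpqc-local trivialization step: Lazard's theorem by itself only provides pointwise uniqueness of exact height $n$ formal groups over algebraically closed fields, and promoting this to fpqc-local triviality in families requires rigidity of the moduli stack at height $n$, concretely the pro-\'etale nature of $\underline{\mathrm{Aut}}(\w{\G}_0)$ (the Morava stabilizer scheme). Once that input is in hand, the remainder is purely formal descent-theoretic bookkeeping which I would carry out by citing the references already listed just before the theorem statement.
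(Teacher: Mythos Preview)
The paper does not actually prove this theorem: it is quoted as a classical result and attributed to \cite[Theorem 4.3.8]{Smithling}, \cite[Lecture 19, Proposition 1]{Lurie Chromatic}, \cite[Theorem 17.9]{Pstragowski}, and \cite[5.36 Theorem]{Goerss}. Your proof plan is the standard argument that those references carry out --- show that $\Spec(\overline{\mathbf F}_p)\to\mathcal M^{\heart,=n}_\mathrm{FG}$ is an fpqc cover using Lazard's classification plus the pro-\'etale structure of the automorphism scheme, then identify the \v Cech nerve with $\underline{\mathrm{Aut}}(\w{\G}_0)$ --- and you correctly isolate the one substantive step (fpqc-local triviality of the isomorphism torsor) as the place where real input beyond Lazard is needed. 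Since you already indicate that you would close that gap by appeal to the listed references, your proposal is in line with what the paper does, namely cite the result rather than reprove it.
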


This concludes the review of the theory of heights of formal groups that we will require.

\subsection{The moduli stack of oriented formal groups of height $\le n$}

Let us now use the decompression mechanism of Proposition \ref{Decompressing opens} to obtain an open substack of the non-connective spectral stack $\M\o  S_{(p)}$ from  the open substack $\mathcal M^{\heart, \le n}_\mathrm{FG}\subseteq\mathcal M^\heart_\mathrm{FG}\o_{\mathbf Z}\mathbf Z_{(p)}$.

\begin{definition}\label{Def of MFGorn}
For any prime $p$ and integer $n\ge 1$, the \textit{moduli stack of oriented formal groups of height $\le n$} is defined to be the fiber product
$$
\begin{tikzcd}
\mathcal M^{\mathrm{or},\le n}_\mathrm{FG}\ar{r} \ar{d} & \mathcal M^{\heart,\le n}_\mathrm{FG}\circ\pi_0\ar{d}\\
\M \o  S_{(p)}\ar{r} & (\mathcal M^{\heart}_\mathrm{FG}\o_{\mathbf Z}\mathbf Z_{(p)})\circ\pi_0
\end{tikzcd}
$$
in the $\i$-category $\Shv_\mathrm{fpqc}^\mathrm{nc}$.
\end{definition}

\begin{remark}\label{expldef of len}
Note that for any $\E$-ring $A$, the universal $\E$-ring map $S\to A$ factors through $S\to S_{(p)}$ if and only if $A$ is $(p)$-local.
Together with {\cite[Proposition 4.3.23]{Elliptic 2}}, this  gives rise to an explicit description 
 of the functor $\M :\CAlg\to\mS$ as
$$
(\M\o  S_{(p)}) (A) \simeq
\begin{cases}
* & \text{if $A$ is a $(p)$-local complex periodic $\E$-ring.}\\
\emptyset & \text{otherwise,}
\end{cases}
$$
It follows that the pullback description of $\mathcal M^{\mathrm{or}, \le n}_\mathrm{FG}$ from Definition \ref{Def of MFGorn} determines its value on any $\E$-ring $A$ to be
$$
\mathcal M^{\mathrm{or}, \le n}_{\mathrm{FG}}(A) \simeq
\begin{cases}
* & \text{if $A$ is $(p)$-local complex periodic, and $\w{\G}{}^{\CMcal Q_0}_A$ is of height $ \le n$,}\\
\emptyset & \text{otherwise.}
\end{cases}
$$
The canonical map of non-connective spectral stacks $\mathcal M^{\mathrm{or}, \le n}_\mathrm{FG}\to \M\o  S_{(p)}$ hence amounts to the inclusion of $(p)$-local complex periodic $\E$-rings, whose classical Quillen formal groups are of height $\le n$, (or equivalently by Remark \ref{Landscription}, which satisfy $\mathfrak I^A_{n+1} =\pi_0(A)$,) into all $(p)$-local complex periodic $\E$-rings.
\end{remark}

\begin{lemma}\label{Lemma 1-line}
The canonical map $j_n:\mathcal M^{\mathrm{or}, \le n}_\mathrm{FG}\to \M\o  S_{(p)}$ is an open immersion of non-connective spectral stacks. On underlying ordinary stacks, it induces the open immersion $\mathcal M^{\heart, \le n}_\mathrm{FG}\subseteq \mathcal M_{\mathrm{FG}}^\heart\o_{\mathbf Z}\mathbf Z_{(p)}$ of ordinary formal groups of height $\le n$ into all formal groups (in the $(p)$-local setting).
\end{lemma}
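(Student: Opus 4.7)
The plan is to recognize the defining pullback square of Definition \ref{Def of MFGorn} as precisely an instance of the decompression construction from Proposition \ref{Decompressing opens}, and then read off both conclusions of the lemma directly from that proposition. This is essentially tautological once the underlying ordinary stack of $\M \o S_{(p)}$ has been correctly identified.

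First I would verify that the underlying ordinary stack of $\M \o S_{(p)}$ is $\Mo \o_{\mathbf Z} \mathbf Z_{(p)}$. By point (i) of Theorem \ref{Old Main Theorem}, the underlying ordinary stack of $\M$ itself is $\Mo$. Since the $\E$-ring map $S \to S_{(p)}$ is \'etale (as noted at the beginning of Section \ref{Section 3}), the base change $\M \o S_{(p)}$ falls within the scope of Remark \ref{remark about etale base-change}, which identifies its underlying ordinary stack with the classical \'etale base-change $\Mo \o_{\mathbf Z} \mathbf Z_{(p)}$.

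Second, the inclusion $\Mon \subseteq \Mo \o_{\mathbf Z} \mathbf Z_{(p)}$ is an open immersion of ordinary stacks by the very definition of $\Mon$ as the open complement of the reduced closed substack $\mathcal M_\mathrm{FG}^{\heart, \ge n+1}$. Hence the defining pullback square
$$
\begin{tikzcd}
\Mn\ar{r} \ar{d} & \Mon\circ\pi_0\ar{d}\\
\M \o  S_{(p)}\ar{r} & (\Mo\o_{\mathbf Z}\mathbf Z_{(p)})\circ\pi_0
\end{tikzcd}
$$
of Definition \ref{Def of MFGorn} is exactly the pullback square appearing in Proposition \ref{Decompressing opens}, with inputs $X = \M \o S_{(p)}$ and $U^\heart = \Mon \subseteq X^\heart$.

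Invoking Proposition \ref{Decompressing opens} then yields both claims simultaneously: the top horizontal arrow $j_n : \Mn \to \M \o S_{(p)}$ is an open immersion of non-connective spectral stacks, and the compression map $\Mn \to \Mon \circ \pi_0$ exhibits an equivalence between the underlying ordinary stack of $\Mn$ and $\Mon$. The description of the induced map on underlying ordinary stacks as the open inclusion $\Mon \subseteq \Mo \o_{\mathbf Z} \mathbf Z_{(p)}$ follows by the naturality of the compression map. I do not anticipate any real obstacle; the lemma is essentially a direct application of the formalism established in Section \ref{Section 1}, with the mildly non-trivial step being the identification of the underlying ordinary stack of the $(p)$-localized $\M$, handled cleanly by the \'etale base-change discussion.
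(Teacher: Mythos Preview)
Your proposal is correct and follows exactly the paper's approach: the paper's proof is the single sentence ``This is a direct application of Proposition \ref{Decompressing opens}.'' You have simply spelled out the verification of the hypotheses (identifying the underlying ordinary stack of $\M\o S_{(p)}$ via \'etale base-change and noting that $\Mon\subseteq\Mo\o_{\mathbf Z}\mathbf Z_{(p)}$ is open) that the paper leaves implicit.
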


\begin{proof}
This is a direct application of  Proposition \ref{Decompressing opens}.
\end{proof}

The next few results are dedicated to  obtaining a better understanding of the non-connective spectral stack $\mathcal M^{\mathrm{or}, \le n}_\mathrm{FG}$. 
They accomplishing for it what  \cite[Theorem 2.3.1 - Corollary 2.3.7]{ChromaticCartoon} do for $\M$.

\begin{prop}\label{Landweber cover}
Let $E$ be an even periodic $(p)$-local Landweber exact $\E$-ring, for which the formal group $\w{\G}{}^{\CMcal Q_0}_A$ has height $n$, in the sense of Remark \ref{def height n}. The essentially unique map of non-connective spectral stacks $\Spec(E)\to \mathcal M^{\mathrm{or}, \le n}_\mathrm{FG}$ is faithfully flat.
\end{prop}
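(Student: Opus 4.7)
The plan is to verify faithful flatness by pulling back along test affines and then reducing to a statement about ordinary commutative rings, which becomes a classical consequence of the Landweber exact functor theorem together with the height-$n$ hypothesis.

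First, existence of the map $\Spec(E) \to \mathcal M_\mathrm{FG}^{\mathrm{or}, \le n}$ follows from Remark \ref{expldef of len}, since a $(p)$-local even periodic Landweber exact $\E$-ring is complex periodic and, by hypothesis, has height $\le n$. Faithful flatness of a morphism of non-connective spectral stacks may then be checked after pulling back along an arbitrary map from an affine $\Spec(A) \to \mathcal M_\mathrm{FG}^{\mathrm{or}, \le n}$. By the same functor-of-points formula, such an $A$ is $(p)$-local complex periodic of height $\le n$, whence $E \o A$ is again complex periodic, and the functor-of-points description of $\M$ in Remark \ref{expldef of len} identifies the pullback with $\Spec(E \o A)$. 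Thus it suffices to prove that the $\E$-ring map $A \to E \o A$ is faithfully flat for every such $A$.

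Next I would reduce this to the $\pi_0$-level. Since $A$ and $E \o A$ are complex periodic, their homotopy groups are concentrated in even degrees, and functoriality of the classical Quillen formal group identifies $\pi_{2k}(E \o A)$ with $\pi_{2k}(A) \o_{\pi_0(A)} \pi_0(E \o A)$ for all $k \in \mathbf Z$. The flatness criterion for $\E$-ring maps \cite[Proposition 7.2.2.13]{HA} then implies that $A \to E \o A$ is (faithfully) flat if and only if the ordinary ring map $\pi_0(A) \to \pi_0(E \o A)$ is (faithfully) flat. Flatness is then the classical Landweber exact functor theorem, or equivalently, the assertion that the induced map of ordinary stacks $\Spec(\pi_0(E)) \to \Mo \o_{\mathbf Z} \mathbf Z_{(p)}$ is flat, base-changed along $\Spec(\pi_0(A)) \to \Mo \o_{\mathbf Z} \mathbf Z_{(p)}$.

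The expected main obstacle is surjectivity, for which the height-$n$ hypothesis is essential. It suffices to show that $\Spec(\pi_0(E)) \to \mathcal M_\mathrm{FG}^{\heart, \le n}$ is surjective, since surjectivity is preserved under base-change. Here I would stratify $\mathcal M_\mathrm{FG}^{\heart, \le n}$ by the exact-height substacks $\mathcal M_\mathrm{FG}^{\heart, =m}$ for $0 \le m \le n$, and apply Theorem \ref{orbit picture at height n} (together with the analogous classical statement for $m = 0$) to realize each stratum as a single geometric point modulo its automorphism group. The preimage in $\Spec(\pi_0(E))$ of each such stratum is non-empty: the Landweber regularity of the sequence $(p, v_1, \ldots, v_{n-1})$ in $\pi_0(E)$, together with the invertibility of $v_n$ (implied by $\mathfrak I^E_{n+1} = \pi_0(E)$), ensures that $\pi_0(E)/(p, v_1, \ldots, v_{m-1})$ is non-zero and that $v_m$ acts on it as a non-zero-divisor for each $0 \le m \le n$, producing a height-$m$ point in the image.
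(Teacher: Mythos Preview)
Your overall structure matches the paper's: pull back along an arbitrary affine $\Spec(A)\to\Mn$, identify the fiber product with $\Spec(E\o A)$ using the functor-of-points description, and reduce to showing that $A\to E\o A$ is faithfully flat. Your surjectivity argument via the height stratification is more explicit than the paper's (which simply cites \cite{Naumann} and \cite{Pstragowski}) and is essentially correct.

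The gap is in your reduction to $\pi_0$. You assert that because $A$ and $E\o A$ are complex periodic, their homotopy groups are concentrated in even degrees; this is false. Complex periodicity (complex orientable plus weakly $2$-periodic) imposes no vanishing on odd homotopy: a general test ring $A$ with $\Spec(A)\to\Mn$ may well have $\pi_1(A)\ne 0$. Your Quillen-formal-group argument does give $\pi_{2k}(E\o A)\simeq \pi_0(E\o A)\o_{\pi_0(A)}\pi_{2k}(A)$, but the flatness criterion of \cite[Proposition 7.2.2.13]{HA} requires this isomorphism in \emph{all} degrees, and nothing you have said controls $\pi_1(E\o A)$ in terms of $\pi_1(A)$.

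The paper closes this gap by using the Landweber exactness of $E$ directly: writing $\pi_n(E\o A)\simeq \pi_0(E)\o_{\pi_0(\MP)}\pi_n(\MP\o A)$ for every $n$, and then invoking the previously established flatness of $A\to\MP\o A$ (from \cite[Lemma 2.3.5]{ChromaticCartoon}) to obtain $\pi_n(\MP\o A)\simeq \pi_0(\MP\o A)\o_{\pi_0(A)}\pi_n(A)$. Combining these gives the required isomorphism in all degrees, after which faithful flatness of $\pi_0(A)\to\pi_0(E\o A)$ finishes the proof. You should replace your second paragraph with this argument.
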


\begin{proof}
For any $\E$-ring $A$ with a map $\Spec(A)\to\mathcal M^{\mathrm{or}, \le n}_\mathrm{FG}$, we must show that the map $\Spec(E)\times_{\mathcal M^{\mathrm{or}, \le n}_\mathrm{FG}}\Spec(A)\to \Spec(A)$ is faithfully flat. It follows from the explicit description of the functor $\mathcal M^{\mathrm{or}, \le n}_\mathrm{FG}$ in Remark \ref{expldef of len} that $A$ is complex periodic, and that the fiber product is $\Spec(E)\times_{\mathcal M^{\mathrm{or}, \le n}_\mathrm{FG}}\Spec(A)\simeq \Spec(E\o A).$ Consequently, we need to verify that the map of $\E$-rings $A\to E\o A$ is faithfully flat.

By assumption, $E$ is a Lansweber exact even periodic spectrum. That means that it is, as a homology theory (on spectra), obtained from periodic complex bordism  $\MP$ as
$$
\pi_n(E\o X) = E_n(X)\simeq \pi_0(E)\o_{\pi_0(\MP)} \MP_n(X)\simeq \pi_0(E)\o_{\pi_0(\MP)} \pi_n(\MP\o X)
$$
for any spectrum $X$. Plugging in $X=A$, and since we already know from \cite[Lemma 2.3.5]{ChromaticCartoon} that the $\E$-ring map $A\to\MP\o A$ is flat, it follows that
$$
\pi_n(\MP\o A)\simeq \pi_0(\MP\o A)\o_{\pi_0(A)} \pi_n(A)
$$
and consequently
\begin{eqnarray*}
\pi_n(E\o A)&\simeq& \pi_0(E)\o_{\pi_0(\MP)} (\pi_0(\MP\o A)\o_{\pi_0(A)} \pi_n(A))\\
&\simeq& (\pi_0(E)\o_{\pi_0(\MP)} \pi_0(\MP\o A))\o_{\pi_0(A)} \pi_n(A)\\
&\simeq& \pi_0(E\o A)\o_{\pi_0(A)} \pi_n(A),
\end{eqnarray*}
as desired. It remains for us to verify that $\pi_0(A)\to \pi_0(E\o A)$ is faithfully flat. For a direct proof, see \cite[Lemma 21.8]{Pstragowski}, but the equivalent claim that the map of ordinary stacks $\Spec(\pi_0(E))\to \mathcal M^{\heart, \le n}_\mathrm{FG}$ is fully faithful is one of the main results of \cite{Naumann}.
\end{proof}

\begin{corollary}
The non-connective spectral stack $\mathcal M^{\mathrm{or}, \le n}_\mathrm{FG}$ is geometric.
\end{corollary}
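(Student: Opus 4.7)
The plan is to exhibit $\mathcal M^{\mathrm{or},\le n}_\mathrm{FG}$ as the geometric realization of a Čech-type groupoid object in affine non-connective spectral schemes, mirroring the argument used for $\M$ itself in \cite[Theorem 2.3.1]{ChromaticCartoon}. First I would fix a convenient chart by choosing any $(p)$-local even periodic Landweber exact $\E$-ring $E$ whose classical Quillen formal group has exact height $n$ — for concreteness, the Lubin–Tate spectrum $E_n$ will do. Proposition \ref{Landweber cover} then immediately supplies a faithfully flat map $f:\Spec(E)\to\mathcal M^{\mathrm{or},\le n}_\mathrm{FG}$.

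The next step is to identify the Čech nerve of $f$ with an affine simplicial object. Because the open immersion $j_n:\mathcal M^{\mathrm{or},\le n}_\mathrm{FG}\to\M\o S_{(p)}$ of Lemma \ref{Lemma 1-line} is (like every open immersion) a monomorphism, the fiber products of $\Spec(E)$ with itself over $\mathcal M^{\mathrm{or},\le n}_\mathrm{FG}$ coincide with the corresponding fiber products over $\M\o S_{(p)}$. Since $\M$ is already known to be geometric with affine diagonal \cite[Corollary 2.3.7]{ChromaticCartoon}, one has $\Spec(E)\times_{\M\o S_{(p)}}\Spec(E)\simeq\Spec(E\o E)$, and inductively $\Spec(E)^{\times_{\mathcal M^{\mathrm{or},\le n}_\mathrm{FG}}(\bullet+1)}\simeq \Spec(E^{\o \bullet+1})$. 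That this simplicial affine scheme indeed maps into $\mathcal M^{\mathrm{or},\le n}_\mathrm{FG}$ rather than just into $\M\o S_{(p)}$ is automatic from the explicit description in Remark \ref{expldef of len}: $(p)$-locality, complex periodicity, and the height $\le n$ condition on the classical Quillen formal group are all preserved under $E\o(-)$ applied to an object of $\mathcal M^{\mathrm{or},\le n}_\mathrm{FG}(E)$, because such a tensor product is flat over $E$ (as in the proof of Proposition \ref{Landweber cover}) and consequently has Landweber ideal $\mathfrak I^{E\o E}_{n+1}=(1)$.

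Combining these, faithfully flat descent along the cover $f$ gives
\[
\mathcal M^{\mathrm{or},\le n}_\mathrm{FG}\simeq\varinjlim\Bigl(\cdots\,\substack{\longrightarrow \\ \longrightarrow\\ \longrightarrow}\,\Spec(E\o E)\,\substack{\longrightarrow\\ \longrightarrow}\,\Spec(E)\Bigr)
\]
in $\Shv_\mathrm{fpqc}^\mathrm{nc}$, which is precisely a presentation of $\mathcal M^{\mathrm{or},\le n}_\mathrm{FG}$ as a $1$-geometric non-connective spectral stack in the sense of \cite[Definition 1.3.1]{ChromaticCartoon}. The one potentially subtle point — and the only real obstacle I foresee — is the verification that the Čech nerve stays inside $\mathcal M^{\mathrm{or},\le n}_\mathrm{FG}$; as indicated above, this reduces quickly to the flatness and height control coming from Landweber exactness, so I expect the argument to be short once the right chart has been chosen.
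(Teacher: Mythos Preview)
Your proposal is correct and follows essentially the same strategy as the paper: verify the two conditions of \cite[Definition 1.3.1]{ChromaticCartoon} using the faithfully flat cover supplied by Proposition \ref{Landweber cover}. The paper's proof is terser --- it simply notes that affineness of the diagonal follows ``by a completely analogous argument to \cite[Lemma 2.3.2]{ChromaticCartoon}'' and then invokes the definition.

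Two minor remarks on where you differ. First, your route to the affine diagonal is slightly slicker than the paper's: rather than rerunning the computation from \cite[Lemma 2.3.2]{ChromaticCartoon}, you observe that the open immersion $j_n$ is a monomorphism and hence fiber products over $\mathcal M^{\mathrm{or},\le n}_\mathrm{FG}$ agree with those over $\M\o S_{(p)}$, reducing to the already-established affine diagonal of $\M$. Second, you actually prove more than is asked: the \v{C}ech presentation you write down is the content of the paper's \emph{next} corollary (Corollary \ref{Cech nerve for len}), which the paper derives separately via \cite[Proposition 1.3.5, Lemma 1.3.6]{ChromaticCartoon}. Your concern about the \v{C}ech nerve ``staying inside $\mathcal M^{\mathrm{or},\le n}_\mathrm{FG}$'' is unnecessary --- the nerve is by construction a simplicial object over $\mathcal M^{\mathrm{or},\le n}_\mathrm{FG}$, so the structure maps already land there --- but the verification you give is harmless.
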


\begin{proof}
Since the affineness of the diagonal for  $\mathcal M^{\mathrm{or}, \le n}_\mathrm{FG}$ is clear by a completely analogous argument to \cite[Lemma 2.3.2]{ChromaticCartoon}, this follows directly from Proposition in light of the definition of geometric non-connective spectral stacks \cite[Definition 1.3.1]{ChromaticCartoon}.
\end{proof}

\begin{corollary}\label{Cech nerve for len}
There is a canonical equivalence
$$
\mathcal M^{\mathrm{or}, \le n}_\mathrm{FG}\simeq \left|\mathrm{\check C}^\bullet(\Spec(E)/\Spec(S))\right|,
$$
with the geometric realization formed in  the $\i$-topos $\mathcal S\mathrm{hv}_{\mathrm{fpqc}}^\mathrm{nc}$.
\end{corollary}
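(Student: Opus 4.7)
The plan is to deduce this Čech nerve presentation from the faithful flatness of the Landweber cover $p_n : \Spec(E)\to \mathcal M^{\mathrm{or},\le n}_\mathrm{FG}$ established in Proposition \ref{Landweber cover}, together with fpqc descent for the geometric non-connective spectral stack $\mathcal M^{\mathrm{or},\le n}_\mathrm{FG}$.

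First I would observe that, since $\mathcal M^{\mathrm{or},\le n}_\mathrm{FG}$ is geometric, it satisfies fpqc descent; hence the faithfully flat effective epimorphism $p_n$ exhibits its target as the geometric realization of its own Čech nerve
$$
\mathcal M^{\mathrm{or},\le n}_\mathrm{FG}\,\simeq\,\bigl|\check{\mathrm C}^\bullet(\Spec(E)/\mathcal M^{\mathrm{or},\le n}_\mathrm{FG})\bigr|
$$
in $\Shv^\mathrm{nc}_\mathrm{fpqc}$. The task is thus reduced to identifying this relative Čech nerve with the absolute one $\check{\mathrm C}^\bullet(\Spec(E)/\Spec(S))$, whose $k$-th term is simply $\Spec(E^{\otimes(k+1)})$ since $\Spec(S)$ is the terminal object of $\Shv^\mathrm{nc}_\mathrm{fpqc}$.

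For this, the key computation is already contained in the proof of Proposition \ref{Landweber cover}: for any $\E$-ring $A$ with a map $\Spec(A)\to\mathcal M^{\mathrm{or},\le n}_\mathrm{FG}$, the fiber product is computed by
$$
\Spec(E)\times_{\mathcal M^{\mathrm{or},\le n}_\mathrm{FG}}\Spec(A)\,\simeq\,\Spec(E\otimes A).
$$
Applying this iteratively with $A = E^{\otimes k}$ (which carries a canonical map to $\mathcal M^{\mathrm{or},\le n}_\mathrm{FG}$ via the cover) identifies each term of the relative Čech nerve with $\Spec(E^{\otimes(k+1)})$, matching the absolute Čech nerve level by level. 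The simplicial structure maps agree as well, since they are determined in both cases by the coalgebra structure of $E$ over itself coming from the unit map $S\to E$.

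The only mild subtlety is to check that these identifications are compatible with the simplicial face and degeneracy operators, i.e.\ that the comparison is natural in $[k]\in\boldsymbol\Delta^{\mathrm{op}}$; this follows formally from the universal property of the Čech nerve applied to the factorization $\Spec(E)\to\mathcal M^{\mathrm{or},\le n}_\mathrm{FG}\to\Spec(S)$. Combining, we obtain the desired equivalence
$
\mathcal M^{\mathrm{or},\le n}_\mathrm{FG}\simeq\bigl|\check{\mathrm C}^\bullet(\Spec(E)/\Spec(S))\bigr|.
$
No step of the argument appears to present a serious obstacle; the main content is really Proposition \ref{Landweber cover} together with the explicit computation of fiber products over $\mathcal M^{\mathrm{or},\le n}_\mathrm{FG}$ contained in its proof.
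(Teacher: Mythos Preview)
Your proof is correct and follows essentially the same approach as the paper: the paper simply invokes \cite[Proposition 1.3.5 and Lemma 1.3.6]{ChromaticCartoon} on top of Proposition \ref{Landweber cover}, and your argument is precisely an unpacking of what those results say---that a faithfully flat map to a geometric non-connective spectral stack with affine diagonal is an effective epimorphism whose relative \v Cech nerve agrees with the absolute one over $\Spec(S)$. The identification of fiber products $\Spec(E)\times_{\mathcal M^{\mathrm{or},\le n}_\mathrm{FG}}\Spec(A)\simeq\Spec(E\otimes A)$ you extract from the proof of Proposition \ref{Landweber cover} is exactly the affine-diagonal input those lemmas require.
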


\begin{proof}
In light of Proposition \ref{Landweber cover}, this is an instance of \cite[Proposition 1.3.5]{ChromaticCartoon}, or more specifically, an application of \cite[Lemma 1.3.6]{ChromaticCartoon}.
\end{proof}

\begin{remark}\label{Cech nerve for len with LT}
One $\E$-ring satisfying the assumptions of Proposition \ref{Landweber cover} is the Lubin-Tate spectrum $E_n$ (associated to any height $n$ formal group over any perfect field of characteristic $p$). Corollary \ref{Cech nerve for len} therefore asserts the colimit presentation
$$
\xymatrix{
\mathcal M^{\mathrm{or}, \le n}_\mathrm{FG}\simeq \varinjlim\Big( \cdots  \ar@<-1.5ex>[r]\ar@<-.5ex>[r] \ar@<.5ex>[r] \ar@<1.5ex>[r]& \Spec(E_n\o E_n\o E_n)\ar@<-1ex>[r]\ar[r] \ar@<1ex>[r] &\Spec(E_n\o E_n)\ar@<-.5ex>[r]\ar@<.5ex>[r] & \Spec(E_n)\Big )
}
$$
for the moduli stack of oriented formal groups of height $\le n$.
\end{remark}

\subsection{Quasi-coherent sheaves on $\mathcal M^{\mathrm{or}, \le n}_\mathrm{FG}$ and chromatic localizations}

The functoriality of quasi-coherent sheaves, discussed in \cite[Section 1.4]{ChromaticCartoon} therefore gives rise to adjunctions
$$
\xymatrix{
\Sp_{(p)} \ar@<.5ex>[r]^{p^*\qquad\quad}&\QCoh(\M\o  S_{(p)}) \ar@<.5ex>[l]^{p_*\qquad\quad} \ar@<.5ex>[r]^{\quad j_n^*}& \QCoh(\mathcal M^{\mathrm{or}, \le n}_\mathrm{FG}) \ar@<.5ex>[l]^{\quad (j_n)_*},
}
$$
where $p:\M\o  S_{(p)}\to\Spec(S_{(p)})$ is the canonical map, and where we have used the canonical equivalence of $\i$-categories $\QCoh(\Spec(S_{(p)})\simeq \Sp_{(p)}$. The composite adjunction is just (the  $(p)$-local version of) the adjunction $\sO_{\mathcal M^{\mathrm{or}, \le n}_\mathrm{FG}}\o (-)\dashv \Gamma(\mathcal M^{\mathrm{or}, \le n}_\mathrm{FG};-)$ of Example \cite[Example 1.4.4]{ChromaticCartoon}, and we will often use this notation for it.

\begin{theorem}\label{Chromatic localization is restriction}
The adjunction
$$
\sO_{\mathcal M_{\mathrm{FG}}^{\mathrm{or}, \le n}}\o (-) :\Sp_{(p)}\rightleftarrows \QCoh(\mathcal M^{\mathrm{or}, \le n}_\mathrm{FG}) :\Gamma(\mathcal M^{\mathrm{or}, \le n}_\mathrm{FG}; -)
$$
of the preceding discussion exhibits an equivalence of  symmetric monoidal $\i$-categories
$
\QCoh(\mathcal M^{\mathrm{or}, \le n}_\mathrm{FG})\simeq L_n\mathrm{Sp}
$
between quasi-coherent sheaves on $\mathcal M^{\mathrm{or}, \le n}_\mathrm{FG}$ and $E(n)$-local spectra.
\end{theorem}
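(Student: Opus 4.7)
The plan is to exploit the simplicial presentation of $\mathcal M^{\mathrm{or},\le n}_{\mathrm{FG}}$ in terms of the Lubin-Tate spectrum from Remark \ref{Cech nerve for len with LT}, and then identify the resulting totalization with $L_n\Sp$ via the descendability of $L_nS\to E_n$, a consequence of the Hopkins-Ravenel Smash Product Theorem.

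First, since the map $\Spec(E_n)\to\mathcal M^{\mathrm{or},\le n}_{\mathrm{FG}}$ is faithfully flat by Proposition \ref{Landweber cover}, flat descent of quasi-coherent sheaves (applied to the Čech nerve of Corollary \ref{Cech nerve for len}) yields a canonical equivalence of symmetric monoidal $\i$-categories
$$
\QCoh(\mathcal M^{\mathrm{or},\le n}_{\mathrm{FG}})\;\simeq\;\Tot\bigl(\Mod_{E_n^{\o(\bullet+1)}}\bigr),
$$
where the cosimplicial diagram is formed by pullback/pushforward along the Čech face and degeneracy maps, and all tensor products are over the sphere spectrum $S$. The functor $X\mapsto X\o\sO_{\mathcal M^{\mathrm{or},\le n}_{\mathrm{FG}}}$ corresponds under this equivalence to sending a spectrum $X$ to the cosimplicial object $E_n^{\o(\bullet+1)}\o X$.

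Second, I would invoke the Smash Product Theorem to upgrade this to an identification with $L_n\Sp$. Recall that the Smash Product Theorem is equivalent to the statement that $L_n$ is a smashing localization and that the $\E$-ring map $L_nS\to E_n$ is descendable in the sense of Mathew. Descendability gives, via the Barr-Beck-Lurie comonadicity theorem, the equivalence
$$
L_n\Sp\;\simeq\;\Tot\bigl(\Mod_{E_n^{\o_{L_nS}(\bullet+1)}}\bigr),
$$
which is exactly the content of \cite[Proposition 3.4.1]{Picard}. Since $E_n$ is already $L_n$-local and $L_n$ is smashing, we have $E_n\o_{L_nS} E_n\simeq L_n(E_n\o E_n)\simeq E_n\o E_n$, and inductively the two cosimplicial objects $E_n^{\o(\bullet+1)}$ and $E_n^{\o_{L_nS}(\bullet+1)}$ coincide (both as cosimplicial $\E$-rings and on module categories). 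Combining the two totalizations gives the desired symmetric monoidal equivalence $\QCoh(\mathcal M^{\mathrm{or},\le n}_{\mathrm{FG}})\simeq L_n\Sp$.

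Finally, I would verify compatibility with the claimed adjunction. Tracing through the chain of equivalences, the functor $\sO_{\mathcal M^{\mathrm{or},\le n}_{\mathrm{FG}}}\o(-):\Sp_{(p)}\to\QCoh(\mathcal M^{\mathrm{or},\le n}_{\mathrm{FG}})$ factors through the Bousfield localization $L_n:\Sp_{(p)}\to L_n\Sp$, and the induced functor $L_n\Sp\to\QCoh(\mathcal M^{\mathrm{or},\le n}_{\mathrm{FG}})$ sends $L_nX$ to the cosimplicial object $E_n^{\o(\bullet+1)}\o X$, which under the descent identification is precisely $L_nX$ itself. The symmetric monoidal structure on both sides is the one induced by the descent totalization, and matches since $\sO_{\mathcal M^{\mathrm{or},\le n}_{\mathrm{FG}}}\o(-)$ is symmetric monoidal by construction. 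The main obstacle is the appeal to the Smash Product Theorem (through descendability of $L_nS\to E_n$); once that deep input is in hand, the rest is a formal assembly of descent.
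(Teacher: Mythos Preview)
Your proposal is correct and follows essentially the same route as the paper: both use the \v{C}ech nerve presentation of $\mathcal M^{\mathrm{or},\le n}_{\mathrm{FG}}$ in terms of $E_n$ to identify $\QCoh(\mathcal M^{\mathrm{or},\le n}_{\mathrm{FG}})$ with $\Tot(\Mod_{E_n^{\o(\bullet+1)}})$, and then invoke descendability of $L_nS\to E_n$ (a form of the Smash Product Theorem) to identify this totalization with $L_n\Sp$. Your explicit verification that $E_n^{\o_{L_nS}(\bullet+1)}\simeq E_n^{\o(\bullet+1)}$ via the smashing property is a detail the paper leaves implicit, but otherwise the arguments coincide.
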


\begin{proof}[Proof of Theorem \ref{Chromatic localization is restriction}]
According to Lemma \ref{Cech nerve for len}, or more precisely Remark \ref{Cech nerve for len with LT}, there is a canonical equivalence of $\i$-categories
$\QCoh(\mathcal M_\mathrm{FG}^{\mathrm{or}, \le n})\simeq \Tot\big( \Mod_{E_n^{\otimes (\bullet +1)}} \big),$ where the right-hand side is obtained by totalization from the Amitsur complex for $E_n$. In the language of \cite[Definition 3.18]{Mathew}, the Hopkins-Ravenel Smash Product Theorem is equivalent to saying that the canonical $\E$-ring map $L_nS\to E_n$ is descendable, see \cite[Theorem 4.18]{Mathew}. Therefore standard descent theory, e.g.\ \cite[Proposition 3.22]{Mathew} or \cite[Lemma D.3.5.8]{SAG}, shows that the canonical map $L_n\mathrm{Sp}\to \Tot\big( \Mod_{E_n^{\otimes (\bullet +1)}} \big)$ is an equivalence of $\i$-categories.

Consequently, we obtain an equivalence of $\i$-categories $\QCoh(\mathcal M^{\mathrm{or}, \le n}_\mathrm{FG})\simeq L_n\mathrm{Sp}$. By tracing through all the identifications, one finds that it is induced by push-pull functoriality from the terminal map $p \circ j_n :\mathcal M^{\mathrm{or}, \le n}_\mathrm{FG}\to \Spec(S)$, seeing how the latter factors through $\Spec(L_nS)\to\Spec(S)$. That is to say, the preceding argument shows that the adjunction
$$
p_n^*:\QCoh(\mathcal M^{\mathrm{or}, \le n}_\mathrm{FG})\rightleftarrows \QCoh(\Spec(L_nS))\simeq L_n\mathrm{Sp}:(p_n)_*,
$$
induced by the essentially unique map $p_n:\mathcal M^{\mathrm{or}, \le n}_\mathrm{FG}\to\Spec(L_nS)$, 
is an adjoint equivalence of $\i$-categories. Because the localization $L_n$ is smashing, the equivalence of $\i$-categories $\QCoh(\Spec(L_nS))\simeq L_n\mathrm{Sp}$ is symmetric monoidal. Now the symmetric monoidality claim follows from the evident symmetric monoidality of the quasi-coherent pullback $p_n^*$.
\end{proof}

\begin{remark}
Let us indicate a slight reformulation of the proof of  Theorem \ref{Chromatic localization is restriction} (though both fundamentally boil down to applying the Barr-Beck-Lurie Comonadicity Theorem and the Hopkins-Ravenel Smash Product Theorem). From Lemma \ref{Cech nerve for len} and the Beck-Chevalley comonadic formulation of descent, we may conclude that there is a canonical equivalence of $\i$-categories
$$
\QCoh(\mathcal M^{\mathrm{or}, \le n}_\mathrm{FG})\simeq\cMod_{E\o E}(\Mod_E).
$$
Here the comodule $\i$-category on the right is interpreted in the sense of \cite[Construction 2.4.8]{ChromaticCartoon} or equivalently \cite{Torii}. Then we may apply \cite[Theorem 9]{Torii} to identify the comodule $\i$-category in question with the $\i$-category of $E$-local spectra, which is the same as $E(n)$-local spectra as consequence of \cite[Corollary 1.12]{Hovey}.
\end{remark}

\begin{corollary}\label{L_n as restriction}
For any $(p)$-local spectrum $X$, its $n$-th chromatic localization $L_nX$ is, in terms of the equivalence of $\i$-categories of Theorem \ref{Chromatic localization is restriction}, given by the constant sheaf
$\sO_{\mathcal M^{\mathrm{or}, \le n}_\mathrm{FG}}\o X,$ or equivalently as
the sheaf restriction $(\sO_{\M}\o X)\vert_{\mathcal M^{\mathrm{or}, \le n}_\mathrm{FG}}.$ Equivalently yet, it is given explicitly in terms of spectra by
$$
L_nX\simeq \Gamma(\mathcal M^{\mathrm{or}, \le n}_\mathrm{FG}; \sO_{\M}\o X).
$$
\end{corollary}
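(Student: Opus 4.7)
The plan is to read off the corollary directly from the symmetric monoidal equivalence $\QCoh(\Mn) \simeq L_n\Sp$ just established in Theorem \ref{Chromatic localization is restriction}. Unwinding the proof of that theorem, the equivalence is implemented by push-pull along the essentially unique map $p_n:\Mn \to \Spec(L_nS)$, since the terminal map $\Mn \to \Spec(S)$ factors through $\Spec(L_nS) \to \Spec(S)$.

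First I would identify the composite functor $\Sp_{(p)} \xrightarrow{\sO_\Mn\o -} \QCoh(\Mn) \simeq L_n\Sp$ with the $n$-th chromatic localization. The first arrow is pullback along the terminal map $\Mn \to \Spec(S_{(p)})$, which factors as $\Mn \xrightarrow{p_n} \Spec(L_nS) \to \Spec(S_{(p)})$. Because $L_n$ is smashing, the equivalence $\QCoh(\Spec(L_nS)) \simeq L_n\Sp$ is symmetric monoidal and carries the pullback along $\Spec(L_nS) \to \Spec(S_{(p)})$ to the left adjoint of the inclusion $L_n\Sp \subseteq \Sp_{(p)}$, namely $L_n$ itself. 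Combined with $p_n^*$, which implements the equivalence of Theorem \ref{Chromatic localization is restriction}, this shows that the composite $\Sp_{(p)} \to L_n\Sp$ sends $X$ to $L_nX$.

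Passing to right adjoints, the composite inclusion $L_n\Sp \subseteq \Sp_{(p)}$, viewed through the equivalence as global sections $\Gamma(\Mn; -)$, applied to $\sO_\Mn \o X$ now yields $L_nX \simeq \Gamma(\Mn; \sO_\Mn \o X)$. The remaining identification $\sO_\Mn \o X \simeq (\sO_{\M} \o X)\vert_\Mn$ is immediate from the symmetric monoidality of the open-immersion pullback $j_n^*$, which sends the structure sheaf to the structure sheaf and hence the constant sheaf on $X$ over $\M \o S_{(p)}$ to the constant sheaf on $X$ over $\Mn$. There is no serious obstacle here: once Theorem \ref{Chromatic localization is restriction} is in hand, the corollary is a formal bookkeeping of adjunctions.
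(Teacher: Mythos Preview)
Your proof is correct and matches the paper's primary argument, which simply notes that the corollary follows directly from Theorem \ref{Chromatic localization is restriction} by unwinding the adjunctions exactly as you do. The paper also records an alternative direct route: using the \v{C}ech presentation of Remark \ref{Cech nerve for len with LT} to identify $\Gamma(\Mn; \sO_{\M}\o X)\simeq \Tot(E_n^{\o(\bullet+1)}\o X)$ as the $E_n$-nilpotent completion of $X$, which agrees with $L_nX$ by the Smash Product Theorem.
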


\begin{proof}
This follows directly from Theorem \ref{Chromatic localization is restriction}. Alternatively, it may be seen directly by noting that Remark \ref{Cech nerve for len with LT} gives rise to the formula
$$
\Gamma(\mathcal M_\mathrm{FG}^{\mathrm{or}, \le n}; \sO_{\mathcal M_\mathrm{FG}^{\mathrm{or}}}\o X)\simeq  \Tot\big(E_n^{\o (\bullet+1)}\o X\big).
$$
The right-hand side is the $E_n$-nilpotent completion of $X$, and its equivalence with the Bousfield localization $L_{E_n}X\simeq L_nX$ for any spectrum $X$ is an equivalent form of the Smash Product Theorem, see \cite[Lecture 31]{Lurie Chromatic}.
\end{proof}

\begin{remark}
Let $\mC\subseteq\CAlg$ denote the full subcategory of complex periodic $(p)$-local $\E$-rings, and let $\mC_{\le n}\subseteq\mC$ be the full subcategory of those $A$ for which the ordinary Quillen formal group $\w{\G}{}^{\CMcal Q_0}_A$ is of height $\le n$. Then we have colimit formulas
$
\M\simeq \varinjlim_{A\in \mC}\Spec(A),
$
and
$\mathcal M^{\mathrm{or}, \le n}_\mathrm{FG}\simeq\varinjlim_{A\in \mC_{\le n}}\Spec(A)\simeq \varinjlim_{A\in \mC}\Spec(L_{\mathfrak I{}^{n+1}_A}(A)),$
 hence Theorem \ref{Chromatic localization is restriction}
exhibits the $n$-th chromatic localization of the $\i$-category of spectra  as the limit of $\i$-categories
$$
L_n\mathrm{Sp}\simeq \varprojlim_{A\in \mC}\Mod_A^{\mathrm{Loc}(\mathfrak I_{n+1}^{A})}.
$$
Analogously, Corollary \ref{L_n as restriction} presents chromatic localization of a spectrum $X$ as
$$
L_nX\simeq \varprojlim_{A\in \mC} L_{\mathfrak I_{n+1}^A}(A\o X).
$$
\end{remark}

\begin{remark}
It follows from Corollary \ref{L_n as restriction} that a $(p)$-local spectrum $X$ is $E(n)$-local if and only if the quasi-coherent sheaf $\sO_{\M}\o X\in\QCoh(\M\o  S_{(p)})$ belongs to the full subcategory $\QCoh(\mathcal M_\mathrm{FG}^{\mathrm{or}, \le n})\subseteq\QCoh(\M\o  S_{(p)})$. 
 This weaker result may be proved directly, without appeal to Theorem \ref{Chromatic localization is restriction}, and without appeal to the Smash Product Theorem. Indeed,
by Proposition \ref{semi-orthog decomp}, quasi-coherent sheaves on $\M\o S_{(p)}$ semi-orthogonally decompose into quasi-coherent sheaves supported along $\mathcal M^{\heart, \ge n+1}_\mathrm{FG}\subseteq\mathcal M^{\heart}_\mathrm{FG}$, and the quasi-coherent sheaves on the open complement $\mathcal M^{\mathrm{or}, \le n}_\mathrm{FG}$. Similarly, $(p)$-local spectra semi-orthogonally decompose into $E(n)$-acyclic and $E(n)$-local spectra. Since the functor $X\mapsto \sO_{\M}\o X$ preserves cofiber sequences, it therefore suffices to show that a $(p)$-local spectrum is $E(n)$-acyclic if and only if the quasi-coherent sheaf $\sF =\sO_{\M}\o X$ is supported along $\mathcal M^{\heart, \ge n+1}_\mathrm{FG}$.
 The latter condition is equivalent to demanding that $\sF\vert_{\mathcal M^{\mathrm{or}, \le n}_\mathrm{FG}}\simeq 0$ in $\QCoh(\M\o  S_{(p)})$, or equivalently that $\pi_k(\sF)\vert_{\mathcal M^{\heart, \le n}_\mathrm{FG}}\simeq 0$ in $\QCoh(\mathcal M^\heart_\mathrm{FG})^\heart$ for all $k\in \mathbf Z$. Recall from \cite[Remark 2.5.4]{ChromaticCartoon} that $\pi_k(\sF)\simeq \sF_k(X)$ are the usual (ordinary) quasi-coherent sheaves on $\mathcal M^{\heart}_\mathrm{FG}$, associated to the spectrum $X$, as appearing in usual stacky approaches to chromatic homotopy theory as developed in \cite{COCTALOS}, \cite{Pstragowski}, \cite{Lurie Chromatic} (denoted in the latter $\sF_{\Sigma^kX}$). In particular, the desired claim now boils down to the fact \cite[Lecture 22]{Lurie Chromatic} that the spectrum $X$ being $E(n)$-acyclic is equivalent to the sheaves $\sF_k(X)$ being supported on the reduced closed substack $\mathcal M^{\heart, \ge n+1}_\mathrm{FG}\subseteq\mathcal M^{\heart}_\mathrm{FG}$ for all $k\in \mathbf Z$ (sufficiently, for $k=0, 1$).
\end{remark}

\begin{remark}\label{IndCoh interp}
Under a very artificial definition of ind-coherent sheaves \cite[Definition 2.4.2]{ChromaticCartoon}, a $(p)$-local analogue of \cite[Theorem 2.4.4]{ChromaticCartoon} shows that the adjunction $p^*\dashv p_*$ induces an equivalence of $\i$-categories $\IndCoh(\M\o  S_{(p)})\simeq \Sp_{(p)}$. In light of that, Theorem \ref{Chromatic localization is restriction} may be seen as combining two equivalences of $\i$-categories. The first and easy one, following from the discussion in the preceding Remark, asserts that (under appropriate definition of ind-coherent sheaves, which we do not spell out here) we have $\IndCoh(\mathcal M^{\mathrm{or},\le n}_\mathrm{FG})\simeq L_n\Sp$. The second and much harder statement is that the canonical functor $\IndCoh(\mathcal M^{\mathrm{or},\le n}_\mathrm{FG})\to \QCoh(\mathcal M^{\mathrm{or},\le n}_\mathrm{FG})$ is an equivalence of $\i$-categories. Since restriction to quasi-compact opens is easily seen to be smashing on quasi-coherent sheaves, this  amounts roughly to the assertion of the Smash Product Theorem.
\end{remark}

 Any quasi-coherent sheaf $\sF\in\QCoh(\M\o  S_{(p)}$ gives rise by Corollary \ref{corollary semi-orthogonal sequence} to a cofiber sequence
\begin{equation}\label{sheaf fiber sequence}
\Gamma_{\mathcal M^{\heart, \ge n+1}_\mathrm{FG}}(\sF)\to \sF\to \sF\vert_{\mathcal M^{\mathrm{or}, \le n}_\mathrm{FG}}.
\end{equation}
Similarly, the yoga of Bousfield localization implies for any $(p)$-complete spectrum the existence of a cofiber sequence
\begin{equation}\label{chromatic fiber sequence}
C_n X\to X\to L_n X
\end{equation}
in $\Sp_{(p)}$, where $C_nX$ is the $E(n)$-acylization of $X$, see for instance \cite[Lecture 32]{Lurie Chromatic}.

\begin{corollary}\label{C_n as local cohomology}
For any finite $(p)$-local spectrum $X$, the above cofiber sequence \eqref{chromatic fiber sequence} in $\Sp$ is obtained from the fiber sequence \eqref{sheaf fiber sequence} in $\QCoh(\M\o_S S_{(p)})$ by passage to global sections. In particular, there is a canonical equivalence of spectra
$$
C_nX\simeq \Gamma_{\mathcal M^{\heart, \ge n+1}_\mathrm{FG}}(\M\o S_{(p)}; \sO_{\M}\o X).
$$
\end{corollary}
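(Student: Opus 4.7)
The plan is to apply the global sections functor to the semi-orthogonal cofiber sequence of Corollary \ref{corollary semi-orthogonal sequence}, and then identify each of the three resulting terms with the appropriate chromatic piece. Specifically, let $\sF := \sO_{\M}\o X$ on the non-connective spectral stack $\M\o S_{(p)}$, and apply Corollary \ref{corollary semi-orthogonal sequence} with open substack $U = \mathcal M^{\mathrm{or}, \le n}_\mathrm{FG}$ and complementary reduced closed substack $K = \mathcal M^{\heart, \ge n+1}_\mathrm{FG}$ on the underlying ordinary stack $\mathcal M^\heart_\mathrm{FG}\o_{\mathbf Z}\mathbf Z_{(p)}$. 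This produces a fiber sequence
$$\Gamma_{\mathcal M^{\heart, \ge n+1}_\mathrm{FG}}(\sF)\to \sF\to \sF\vert_{\mathcal M^{\mathrm{or}, \le n}_\mathrm{FG}}$$
in the stable $\i$-category $\QCoh(\M\o S_{(p)})$, to which I would then apply the global sections functor $\Gamma(\M\o S_{(p)}; -)$.

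Next, I would identify the three terms. The right-most term becomes $\Gamma(\mathcal M^{\mathrm{or}, \le n}_\mathrm{FG}; \sO_{\M}\o X)$, which by Corollary \ref{L_n as restriction} is canonically equivalent to the chromatic localization $L_n X$, and by naturality the induced map from the middle term is the canonical localization map. The left-most term becomes $\Gamma_{\mathcal M^{\heart, \ge n+1}_\mathrm{FG}}(\M\o S_{(p)}; \sO_{\M}\o X)$ by definition. Hence, granted the middle identification, the fiber sequence obtained by applying $\Gamma$ coincides with the defining cofiber sequence $C_nX \to X \to L_nX$, which  forces the desired equivalence for the left term.

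The crucial step, and the one where the finiteness hypothesis enters, is identifying the middle term $\Gamma(\M\o S_{(p)}; \sO_{\M}\o X)$ with $X$ itself. The argument is that the constant sheaf functor $X \mapsto \sO_{\M}\o X$ is symmetric monoidal, while global sections is lax symmetric monoidal, producing a natural assembly map
$$X\o \Gamma(\M\o S_{(p)}; \sO_{\M}) \longrightarrow \Gamma(\M\o S_{(p)}; \sO_{\M}\o X).$$
For $X$ a finite (and hence dualizable) $(p)$-local spectrum, the functor $X\o(-)$ commutes with all limits, so this assembly map is an equivalence. Combined with the computation $\Gamma(\M\o S_{(p)}; \sO_{\M}) \simeq \sO(\M\o S_{(p)}) \simeq S_{(p)}$, which is the $(p)$-local version of Old Main Theorem \ref{Old Main Theorem}(ii) and follows by decompression from \'etale base-change along $S\to S_{(p)}$, this identifies the middle term with $X\o S_{(p)} \simeq X$.

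The main obstacle is really just this finiteness-dualizability step: without it, one cannot commute $X\o(-)$ past the limit defining $\Gamma$. Once that is in place, the remainder is a matter of stringing together the fiber sequence identifications and invoking naturality to match the map $X\to L_nX$ produced by global sections with the canonical chromatic localization map, so that uniqueness of fibers identifies $\Gamma_{\mathcal M^{\heart, \ge n+1}_\mathrm{FG}}(\M\o S_{(p)}; \sO_{\M}\o X)$ with $C_nX$.
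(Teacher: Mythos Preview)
Your proposal is correct and follows essentially the same approach as the paper: apply global sections to the sheaf cofiber sequence, identify the rightmost term with $L_nX$ via Corollary~\ref{L_n as restriction}, identify the middle term with $X$, and read off the left term as $C_nX$. The only difference is cosmetic: the paper cites the $(p)$-local version of \cite[Lemma~2.4.5]{ChromaticCartoon} for the middle identification $X\simeq\Gamma(\M\o S_{(p)};\sO_{\M}\o X)$, whereas you unpack this directly via the dualizability of finite spectra and the computation $\sO(\M\o S_{(p)})\simeq S_{(p)}$; your argument is in effect a proof of that lemma in the finite case.
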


\begin{proof}
By the evident $(p)$-local version of \cite[Lemma 2.4.5]{ChromaticCartoon}, the canonical map $X\to\Gamma(\M\o S_{(p)}; \sO_{\M}\o X)$ is an equivalence of spectra. Together with Corollary \ref{L_n as restriction}, this proves that applying the functor $\Gamma(\M\o S_{(p)};-)$ to the cofiber sequence \eqref{sheaf fiber sequence} reproduces the cofiber sequence \eqref{chromatic fiber sequence}. We obtain the equivalence in the statement of the Corollary by comparing the first terms of the two cofiber sequences in question.
\end{proof}

\begin{remark}
For a not-necessarily-finite $(p)$-local spectrum $X$, it follows just like in the proof of \cite[Proposition 2.4.1]{ChromaticCartoon} that $\Gamma(\M\o S_{(p)}; \sO_{\M}\o X)\simeq X^\wedge_{\MP}$ is the $\MP$-nilpotent completion of $X$. Global sections of the constant sheaf $\sO_{\M}\o X$, supported along the stack of formal groups of height $\ge n+1$, may therefore always be expressed as
$$\Gamma_{\mathcal M^{\heart, \ge n+1}_\mathrm{FG}}(\M\o S_{(p)}; \sO_{\M}\o X)\simeq C_nX^\wedge_\mathrm{MP}.$$
Here the order in which we apply $\MP$-nilpotent completion and $E(n)$-acyclization does not matter, since the functor $C_n=\mathrm{fib}(\mathrm{id}\to L_n)$ commutes with both limits and smash products as consequence of the Smash Product Theorem.
\end{remark}

\begin{remark}As we saw in the last Remark,
unlike Corollary \ref{L_n as restriction}, the conclusion of Corollary \ref{C_n as local cohomology} genuinely requires the finiteness assumption on $X$. In the spirit of Remark \ref{IndCoh interp}, we might informally assert that the functor of ind-coherent global sections induces an equivalence  $\IndCoh_{\mathcal M^{\heart, \ge n+1}_\mathrm{FG}}(\M\o S_{(p)})\simeq C_n(\Sp)$ with the $\i$-category of $E(n)$-acyclic spectra. But unlike in  the case of Theorem \ref{Chromatic localization is restriction} and  Remark \ref{IndCoh interp}, the canonical comparison functor $\IndCoh_{\mathcal M^{\heart, \ge n+1}_\mathrm{FG}}(\M\o S_{(p)})\to \QCoh_{\mathcal M^{\heart, \ge n+1}_\mathrm{FG}}(\M\o S_{(p)})$ need not be an equivalence of $\i$-categories.
\end{remark}

\subsection{The height filtration on $\M$ and the chromatic filtration}
It follows from Definition  \ref{Def of MFGorn} that the  height filtration \eqref{height filtration on heart} of the ordinary stack $\mathcal M^\heart_\mathrm{FG}\o_{\mathbf Z}\mathbf Z_{(p)}$ induces by decompression, i.e.\ pullback along the compression map $\M\to \mathcal M^\heart_\mathrm{FG}\circ \pi_0,$ a tower of non-connective spectral stacks
$$
\mathcal M^{\mathrm{or}, \le 1}_\mathrm{FG}\subseteq
\mathcal M^{\mathrm{or}, \le 2}_\mathrm{FG}\subseteq
\cdots\subseteq
\mathcal M^{\mathrm{or}, \le n-1}_\mathrm{FG}\subseteq
\mathcal M^{\mathrm{or}, \le n}_\mathrm{FG}\subseteq
\cdots\subseteq
\M\o S_{(p)}.
$$
For any quasi-coherent sheaf $\sF$ on $\M\o S_{(p)}$, this induces through restriction a filtration
$$
\sF\to
\cdots\to
\sF\vert_{\mathcal M^{\mathrm{or}, \le n}_\mathrm{FG}}\to
\sF\vert_{\mathcal M^{\mathrm{or}, \le n-1}_\mathrm{FG}}\to
\cdots \to
\sF\vert_{\mathcal M^{\mathrm{or}, \le 2}_\mathrm{FG}}\to
\sF\vert_{\mathcal M^{\mathrm{or}, \le 1}_\mathrm{FG}}.
$$
inside the $\i$-category $\QCoh(\M\o S_{(p)})$. We call this the \textit{chromatic filtration} on $\sF$, justified by the next observation:

\begin{prop}\label{Chromatic filtration = chromatic filtration}
Let $X$ be a $(p)$-local spectrum.
The chromatic filtration in the above sense,  on the quasi-coherent sheaf $\sF = \sO_{\M}\o X$ in the $\i$-category $\QCoh(\M\o S_{(p)})$, induces upon passage to global sections the chromatic filtration
$$
X\to
\cdots\to
L_nX\to
L_{n-1}X\to
\cdots\to
L_2 X\to
L_1X
$$
in the usual sense,
on $X$ in the $\i$-category of spectra.
\end{prop}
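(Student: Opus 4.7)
The plan is to apply the global sections functor to the filtered sheaf one level at a time, then assemble the levels into a coherent tower of spectra. At each fixed $n \ge 1$, pushforward along the open immersion $j_n : \Mn \to \M\o S_{(p)}$ is fully faithful by Lemma~\ref{ff for oim}, giving
$$\Gamma\big(\M\o S_{(p)};\, (\sO_{\M}\o X)\vert_{\Mn}\big) \,\simeq\, \Gamma(\Mn;\, \sO_{\M}\o X) \,\simeq\, L_n X$$
by Corollary~\ref{L_n as restriction}. This already identifies each term of the global-sections tower with the corresponding stage of the chromatic tower.

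The key content is the naturality of these equivalences in $n$: I must show that the restriction map $(\sO_{\M}\o X)\vert_{\Mn} \to (\sO_{\M}\o X)\vert_{\mathcal M^{\mathrm{or}, \le n-1}_\mathrm{FG}}$ is carried by global sections to the canonical Bousfield map $L_n X \to L_{n-1} X$. Following the proof of Theorem~\ref{Chromatic localization is restriction}, the equivalence $\QCoh(\Mn) \simeq L_n\Sp$ is realized by the essentially unique map $p_n : \Mn \to \Spec(L_n S)$, and these factorizations fit into a commutative square
$$
\begin{tikzcd}
\mathcal M^{\mathrm{or}, \le n-1}_\mathrm{FG} \ar{r}\ar{d}[swap]{p_{n-1}} & \Mn \ar{d}{p_n} \\
\Spec(L_{n-1} S) \ar{r} & \Spec(L_n S).
\end{tikzcd}
$$
Pulling back the constant sheaf along the bottom arrow and then around the square via $p_{n-1}^*$ recovers precisely the transition $L_n X \to L_{n-1} X$, so commutativity supplies the required intertwining. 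The leading edge of the tower is treated analogously using the terminal map $\M\o S_{(p)} \to \Spec(S_{(p)})$; for a general $(p)$-local $X$ the very first term is the $\MP$-nilpotent completion $X^\wedge_{\MP}$ rather than $X$, but this agrees with $X$ whenever the chromatic tower converges, and in any case it admits a canonical coaugmenting map from $X$.

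The main obstacle is assembling the level-wise identifications into a coherent natural transformation of $\mathbf Z_{\ge 1}^{\mathrm{op}}$-indexed towers of spectra, rather than a mere family of pointwise equivalences. I expect this to reduce to formal manipulations based on the symmetric monoidality of quasi-coherent pullback and the smashing property of each $L_n$, both already invoked in the proof of Theorem~\ref{Chromatic localization is restriction} via the Hopkins--Ravenel Smash Product Theorem, so the only remaining work is careful bookkeeping of the $\i$-categorical coherences.
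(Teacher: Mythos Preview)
Your proposal is correct and takes essentially the same approach as the paper: both rest on Corollary~\ref{L_n as restriction} to identify each stage $\Gamma\big(\M\o S_{(p)};\,(\sO_{\M}\o X)\vert_{\Mn}\big)\simeq L_nX$. The paper's proof is literally the single sentence ``This is a direct consequence of Corollary~\ref{L_n as restriction},'' treating the naturality in $n$ as automatic from the fact that $\Gamma(\M\o S_{(p)};-)$ is a functor applied to a tower already built in $\QCoh(\M\o S_{(p)})$, together with the compatibility of the adjunctions $(j_n)^*\dashv (j_n)_*$ under composition of open immersions; your commuting-square argument and coherence bookkeeping unpack this but add nothing the paper does not regard as routine. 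Your side remark about the leading term---that global sections of $\sO_{\M}\o X$ is $X^\wedge_{\MP}$ rather than $X$ for a general $(p)$-local $X$---is a valid observation (the paper itself makes it elsewhere), and the statement is best read as identifying the tower of $L_n$-localizations together with the canonical coaugmentation from $X$.
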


\begin{proof}
This is a direct consequence of Corollary \ref{L_n as restriction}.
\end{proof}

\begin{remark}\label{moduli stack of finite height}
Let $\mathcal M^{\mathrm{or}, < \infty}_\mathrm{FG} := \varinjlim_n \mathcal M^{\mathrm{or}, \le n}_\mathrm{FG}$ denote the colimit of the height filtration, i.e.\ the moduli stack of oriented formal groups of finite height. Then Proposition \ref{Chromatic filtration = chromatic filtration} and Theorem \ref{Chromatic localization is restriction} together imply that the global sections functor $\QCoh(\mathcal M^{\mathrm{or}, < \infty}_\mathrm{FG})\to\Sp_{(p)}$ is fully faithful, and its essential image consists of the chromatically complete (\textit{harmonic}, in the language of \cite{Ravenel:Bousfield}) $(p)$-local spectra . In the spirit of Remark \ref{IndCoh interp}, we may informally say that the canonical comparison functor $\IndCoh(\mathcal M^{\mathrm{or}, < \infty}_\mathrm{FG})\to\QCoh(\mathcal M^{\mathrm{or}, < \infty}_\mathrm{FG})$ is an equivalence of $\i$-categories. The difference between the $\i$-categories $\QCoh(\M\o S_{(p)})$ and $\IndCoh(\M\o S_{(p)})\simeq \Sp_{(p)}$ is therefore something that occurs at infinite height.
\end{remark}

Even though the height filtration on the stack of formal groups (oriented or otherwise) is not exhaustive, we now show that the induced chromatic filtration on quasi-coherent sheaves turns out to be, at least under certain finiteness assumptions.

\begin{remark}
To specify these finiteness assumptions, we will say that a quasi-coherent sheaf on a geometric ordinary stack is \textit{coherent}, if a pullback of it to an fpqc cover by an affine scheme is a coherent sheaf in the usual sense. Due to coherence of the Lazard ring, this will be equivalent in the case in question to the assumption of finite presentation - see {\cite[Remark 1.2]{Goerss}}.
\end{remark}

\begin{theorem}[Chromatic convergence]\label{CCT}
Let $\sF$ be a quasi-coherent sheaf on $\M\o S_{(p)}$, and assume that $\pi_t(\sF)$ is a coherent sheaf on the underlying ordinary stack $\mathcal M^\heart_\mathrm{FG}$ for every $t\in \mathbf Z$ (equivalently, for $t=0, 1$). Then the chromatic filtration on $\sF$ is complete, in the sense that it exhibits an equivalence of quasi-coherent sheaves
$
\sF\simeq \varprojlim_n \sF\vert_{\mathcal M^{\mathrm{or}, \le n}_\mathrm{FG}}.
$
\end{theorem}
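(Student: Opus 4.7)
The plan is to translate chromatic convergence into vanishing of an inverse limit of local cohomology sheaves, and then reduce to the analogous classical statement on the underlying ordinary stack. Write $U_n := \mathcal M^{\mathrm{or}, \le n}_\mathrm{FG}$ and $K_n := \mathcal M^{\heart, \ge n+1}_\mathrm{FG}$. The cofiber sequences
$$
\Gamma_{K_n}(\sF)\to \sF\to \sF\vert_{U_n}
$$
of Corollary \ref{corollary semi-orthogonal sequence} assemble into a fiber sequence of towers in $n$, and since inverse limits preserve fibers, completeness of the chromatic filtration on $\sF$ is equivalent to the vanishing $\varprojlim_n \Gamma_{K_n}(\sF)\simeq 0$ in $\QCoh(\M\otimes S_{(p)})$.

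To analyze the homotopy sheaves of $\Gamma_{K_n}(\sF)$, I would invoke the local cohomology spectral sequence of Proposition \ref{lc spectral sequence},
$$
E^{s,t}_2 = \mathcal H^s_{K_n}(\pi_t(\sF))\Rightarrow \pi_{t-s}(\Gamma_{K_n}(\sF)),
$$
living in the abelian category $\QCoh(\mathcal M^\heart_\mathrm{FG})^\heart$. By the coherence hypothesis, each $\pi_t(\sF)$ is a coherent quasi-coherent sheaf on $\mathcal M^\heart_\mathrm{FG}\otimes \mathbf Z_{(p)}$, so its $K_n$-local cohomology vanishes above the cohomological dimension of the closed substack, and the spectral sequence places a finite filtration on $\pi_k(\Gamma_{K_n}(\sF))$ whose subquotients are subquotients of $\mathcal H^s_{K_n}(\pi_{k+s}(\sF))$. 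The Algebraic Chromatic Convergence Theorem of \cite{Goerss} then provides exactly the input one needs in the $n$-direction, giving vanishing of both $\varprojlim_n$ and $\varprojlim{}^1_n$ applied to each tower $\{\mathcal H^s_{K_n}(\pi_t(\sF))\}_n$, for every $s, t\in\mathbf Z$.

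To propagate the vanishing to the total object, one combines the Milnor short exact sequence
$$
0\to \varprojlim{}^1_n\, \pi_{k+1}(\Gamma_{K_n}(\sF))\to \pi_k\big(\varprojlim_n \Gamma_{K_n}(\sF)\big)\to \varprojlim_n \pi_k(\Gamma_{K_n}(\sF))\to 0
$$
with induction along the filtration produced in the previous step, using the six-term exact sequence relating $\varprojlim$ and $\varprojlim{}^1$ on short exact sequences of towers. Goerss's vanishing on the successive subquotients forces both $\varprojlim_n$ and $\varprojlim{}^1_n$ of the tower $\{\pi_k(\Gamma_{K_n}(\sF))\}_n$ to vanish for every $k$, and the Milnor sequence then yields $\pi_k(\varprojlim_n \Gamma_{K_n}(\sF))=0$ for all $k$, whence $\varprojlim_n \Gamma_{K_n}(\sF)\simeq 0$ as required.

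I expect the main technical obstacle to lie in verifying that the local cohomology spectral sequences are sufficiently functorial and behave well enough in $n$ to yield genuine towers of short exact sequences of coherent sheaves on $\mathcal M^\heart_\mathrm{FG}\otimes\mathbf Z_{(p)}$, so that Goerss's vanishing result may be applied term by term up the spectral sequence filtration; a secondary point that would need checking is that the relevant coherence is preserved when passing to subquotients in the abelian category $\QCoh(\mathcal M_\mathrm{FG}^\heart)^\heart$.
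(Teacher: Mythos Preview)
Your overall strategy---reduce to $\varprojlim_n \Gamma_{K_n}(\sF)\simeq 0$, use the local cohomology spectral sequence, and invoke Goerss on the $E_2$-page---matches the paper's. But there is a real gap in how you propagate vanishing through the spectral sequence, and the obstacle you flag at the end is not the right one. The problem is that the condition $\varprojlim=\varprojlim{}^1=0$ on a tower of abelian objects is \emph{not} inherited by subquotient towers in general, and the $E_\infty$-terms are only subquotients of the $E_2$-terms. So knowing that $\varprojlim$ and $\varprojlim{}^1$ vanish on each tower $\{\mathcal H^s_{K_n}(\pi_t(\sF))\}_n$ does not let you conclude the same for the towers $\{E_\infty^{s,t}(n)\}_n$ that actually appear as the successive subquotients of $\{\pi_k(\Gamma_{K_n}(\sF))\}_n$, and your six-term induction never gets started. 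The paper's fix is to extract a stronger statement from Goerss---his auxiliary Theorem~8.20 rather than the ACCT itself---namely that the $E_2$-towers are \emph{pro-zero}: the transition maps vanish once $n$ exceeds a finite bound $r(\pi_t(\sF))$. Pro-triviality \emph{does} pass to arbitrary subquotients, hence to every $E_r$-page including $E_\infty$, after which a Mittag--Leffler argument finishes. Your concern about coherence of the subquotients is a red herring: local cohomology of a coherent sheaf is typically not coherent, so one could not re-apply Goerss's hypotheses to the later pages anyway.

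You also omit the point on which the paper spends the second half of its proof: establishing that the bound $r(\pi_t(\sF))$ is \emph{uniform in $t$}. This uses the periodicity isomorphism $\pi_{t+2}(\sF)\simeq\pi_t(\sF)\otimes\omega_{\mathcal M^\heart_\mathrm{FG}}$, available precisely because $\sF$ lives on the \emph{oriented} moduli stack, to reduce to $t\in\{0,1\}$ and take $r=\max\bigl(r(\pi_0\sF),\,r(\pi_1\sF),\,r(\omega_{\mathcal M^\heart_\mathrm{FG}})\bigr)$. This uniformity is what the paper uses to pass from pro-triviality of the associated graded to control of $\{\pi_k(\Gamma_{K_n}(\sF))\}_n$ itself, since the filtration length on $\pi_k(\Gamma_{K_n}(\sF))$ is roughly $n$ and hence unbounded (your appeal to a fixed ``cohomological dimension of the closed substack'' overlooks that $K_n$ is cut out by $n{+}1$ generators).
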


\begin{proof}
In light of the cofiber sequences \eqref{sheaf fiber sequence}, we have
$$
\fib(\sF\to \varprojlim_n\sF\vert_{\mathcal M^{\mathrm{or}, \le n}_\mathrm{FG}})\simeq \varprojlim_n \fib(\sF\to \sF_{\mathcal M^{\mathrm{or}, \le n}_\mathrm{FG}})\simeq \varprojlim_n \Gamma_{\mathcal M^{\heart, \ge n+1}_\mathrm{FG}}(\sF).
$$
We must therefore show that the limit of the tower $(\Gamma_{\mathcal M^{\heart, \ge n}_\mathrm{FG}}(\sF))_{n}$ vanishes.
Toward that goal, recall from
Proposition \ref{lc spectral sequence} the local cohomology spectral sequences
$$
E\langle n\rangle^{s, t}_2 = \mathcal H^s_{\mathcal M^{\heart, \ge n}_\mathrm{FG}}(\pi_t(\sF))\Rightarrow \pi_{t-s}(\Gamma_{\mathcal M^{\heart, \ge n}_\mathrm{FG}}(\sF)).
$$
functorial in $n$, as it traverses the chromatic filtration.

We claim that the filtered system $(E\langle n\rangle_2^{s,t})_n  = (\mathcal H^s_{\mathcal M^{\heart, \ge n}_\mathrm{FG}}(\pi_t(\sF)))_n$ is pro-isomorphic to zero. For a fixed $t$, this follows from the algebraic chromatic convergence theorem of \cite{Goerss}, or more precisely the auxiliary result \cite[Theorem 8.20]{Goerss}. That asserts that the map $\mathcal H^s_{\mathcal M^{\heart, \ge n}_\mathrm{FG}}(\pi_t(\sF))\to \mathcal H^s_{\mathcal M^{\heart, \ge n+1}_\mathrm{FG}}(\pi_t(\sF))$  is zero whenever $s > r$, where $r=r(\pi_t(\sF))$ is
is the degree of buds of formal groups from the moduli stack of which the quasi-coherent sheaf $\pi_t(\sF)$ is pulled back along, in the sense of \cite[Sections 3.3 \& 3.4]{Goerss}. That such a number $r(\pi_t(\sF))$ exists is a consequence of \cite[Theorem 3.27]{Goerss} and the coherence hypothesis on $\pi_t(\sF)$.

Hence for fixed $s, t$, the system $(E\langle n\rangle_2^{s,t})_n$ is indeed pro-isomorphic to zero. We claim that the same holds for  $(E\langle n\rangle_r^{s,t})_n$ for any $2\le r \le\infty$. This is of particular interest for $r=\infty$, for which we conclude that
$(E\langle n\rangle_\infty)_n = (\pi_*(\Gamma_{\mathcal M^{\heart, \ge n}_\mathrm{FG}}(\sF)))_n$ is  pro-isomorphic to zero. Since any pro-trivial filtered system clearly satisfies the Mittag-Leffler property, it follow that
$$
0 \simeq \varprojlim_n \pi_i(\Gamma_{\mathcal M^{\heart, \ge n}_\mathrm{FG}}(\sF))\simeq \pi_i\big(\varprojlim_n \Gamma_{\mathcal M^{\heart, \ge n}_\mathrm{FG}}(\pi_t(\sF)) \big)
$$
for all $i\in \mathbf Z$, leading to the desired conclusion that $\varprojlim_n \Gamma_{\mathcal M^{\heart, \ge n}_\mathrm{FG}}(\pi_t(\sF))\simeq 0$.

Hence it suffices, thanks to convergence of the local cohomology spectral sequences, to show that the filtered system $(\mathcal H^s_{\mathcal M^{\heart, \ge n}_\mathrm{FG}}(\pi_t(\sF)))_n$, which we already know to be pro-isomorphic to zero, vanishes uniformly in $s$ and $t$. Recalling the above argument, we see that, for fixed $s$ and $t$, the pro-vanishing occurs in degrees $n >  r(\pi_t(\sF))$ and all degrees $s$. Let us set $r = \mathrm{sup}_{t\in \mathbf Z}\,r(\pi_t(\sF))\le \infty$. If $r<\infty,$ then can we conclude that that the pro-vanishing occurs at $n > r$ for all $s$ and $t$ simultaneously as desired.

It remains only to verify that indeed $r <\infty$, which is to say, that the sequence of non-negative integers $(r(\pi_t(\sF))_{t\in \mathbf Z}$ is bounded.
Since  $\sF$ is a quasi-coherent sheaf on the stack of oriented formal groups, where $\Sigma^{-2}(\sO_{\M})\simeq \omega_{\M}$, we have a canonical isomorphism
$$
\pi_{t+2i}(\sF)\simeq \pi_t(\sF)\o_{\mathcal M^\heart_\mathrm{FG}}\omega_{\mathcal M_\mathrm{FG}^\heart}^{\o i}
$$
of ordinary coherent sheaves on $\mathcal M^\heart_\mathrm{FG}$. Consequently, all the homotopy sheaves $\pi_t(\sF)$ are obtained from $\pi_0(\sF)$ and $\pi_1(\sF)$ by tensoring with the line bundle of invariant differentials $\omega_{\mathcal M^\heart_\mathrm{FG}}$. The latter is a coherent sheaf on $\mathcal M^\heart_\mathrm{FG}$, so by \cite[Theorem 3.27]{Goerss} there exists a number $r_\omega = r(\omega_{\mathcal M^\heart_\mathrm{FG}})$ as above.  It follows from the fact that buds of formal groups form a tower, and quasi-coherent pullback being symmetric monoidal, that $r(\sG\o\sG') = \mathrm{max}(r(\sG), r(\sG'))$ holds for any pair of coherent sheaves $\sG, \sG'\in\QCoh(\mathcal M^\heart_\mathrm{FG}\o_{\mathbf Z}\mathbf Z_{(p)})^\heart$. Consequently, for any $t\in \mathbf Z$ and $i\ge 1$, we have
$$
r(\pi_{t+2i}(\sF))=\mathrm{max}(r(\pi_t(\sF)), r(\omega_{\mathcal M^\heart_\mathrm{FG}})),
$$
from which it is clear that $r(\pi_t(\sF))$ depends only on the parity of $t$. It follows that
$$r= \sup_{t\in \mathbf Z}r(\pi_t(\sF)) = \max(r(\pi_0(\sF)), r(\pi_1(\sF)),  r(\omega_{\mathcal M^\heart_\mathrm{FG}}))$$
is a finite number, concluding the proof.
\end{proof}

\begin{remark}
In the notation of Remark \ref{moduli stack of finite height},  we may express Theorem \ref{CCT} as asserting that quasi-coherent sheaves on $\M\o S_{(p)}$ with coherent cohomology are obtained by quasi-coherent pushforward along the canonical map $\mathcal M^{\mathrm{or},< \infty}_\mathrm{FG}\to \M\o S_{(p)}$.
\end{remark}

\begin{corollary}[Hopkins-Ravenel Chromatic Convergence Theorem]
let $X$ be a finite $(p)$-local spectrum. Then the chromatic filtration on $X$ is complete, in the sense that it exhibits an equivalence of spectra $X\simeq \varinjlim_n L_nX$.
\end{corollary}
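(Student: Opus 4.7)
The plan is to deduce this from the sheaf-theoretic chromatic convergence result of Theorem \ref{CCT} applied to the constant quasi-coherent sheaf $\sF = \sO_{\M} \o X$ on $\M \o S_{(p)}$, and then to transfer the resulting equivalence from $\QCoh(\M\o S_{(p)})$ to $\Sp_{(p)}$ by taking global sections.

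First I would verify that the coherence hypothesis of Theorem \ref{CCT} is satisfied for $\sF = \sO_{\M} \o X$. By the identification recalled (and used in the proof of Corollary \ref{C_n as local cohomology}) that $\pi_t(\sO_{\M} \o X) \simeq \sF_t(X)$ as ordinary quasi-coherent sheaves on $\mathcal M^\heart_\mathrm{FG}$, the question reduces to the claim that $\sF_0(X)$ and $\sF_1(X)$ are coherent sheaves on $\mathcal M^\heart_\mathrm{FG}\o_{\mathbf Z}\mathbf Z_{(p)}$. For a finite $(p)$-local spectrum $X$ this is standard: pulling back along the faithfully flat cover $\Spec(\MP_*)\to\mathcal M^\heart_\mathrm{FG}$, these sheaves correspond to $\MP_*(X)$ and $\MP_{*+1}(X)$, which are finitely presented over the (coherent) Lazard ring because $X$ is a finite spectrum.

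Given coherence, Theorem \ref{CCT} furnishes an equivalence
\[
\sO_{\M}\o X \;\simeq\; \varprojlim_n (\sO_{\M}\o X)\big|_{\mathcal M^{\mathrm{or},\le n}_\mathrm{FG}}
\]
in $\QCoh(\M\o S_{(p)})$. Applying the global sections functor $\Gamma(\M\o S_{(p)};-)$, which is a right adjoint and therefore preserves limits, we obtain an equivalence of spectra
\[
\Gamma(\M\o S_{(p)};\sO_{\M}\o X) \;\simeq\; \varprojlim_n \Gamma(\mathcal M^{\mathrm{or},\le n}_\mathrm{FG};\sO_{\M}\o X).
\]
By Corollary \ref{L_n as restriction}, the $n$-th term on the right is $L_n X$. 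For the left-hand side, I would invoke the $(p)$-local analogue of \cite[Lemma 2.4.5]{ChromaticCartoon} (used in the proof of Corollary \ref{C_n as local cohomology}) which gives $X\simeq \Gamma(\M\o S_{(p)};\sO_{\M}\o X)$ for any finite $(p)$-local spectrum $X$ --- equivalently, the $\MP$-nilpotent completion of a finite $(p)$-local spectrum is the identity. Combining, we arrive at $X\simeq \varprojlim_n L_n X$.

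The main obstacle in this argument is neither the coherence verification (which is classical) nor commuting $\Gamma$ past the limit (formal), but rather the input provided by Theorem \ref{CCT}, whose proof in turn rests on the algebraic chromatic convergence result \cite[Theorem 8.20]{Goerss}; the geometric content of the theorem is really concentrated there, and the present corollary is the shadow it casts on spectra via the equivalences of Theorem \ref{Chromatic localization is restriction} and Corollary \ref{L_n as restriction}.
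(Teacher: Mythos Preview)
Your proposal is correct and follows essentially the same approach as the paper: verify the coherence hypothesis of Theorem \ref{CCT} for $\sF=\sO_{\M}\o X$ by reducing to finite presentation of $\MP_*(X)$ over the coherent Lazard ring, then pass to global sections using Corollary \ref{L_n as restriction} and the $(p)$-local analogue of \cite[Lemma 2.4.5]{ChromaticCartoon}. The paper's proof is terser---it leaves the global-sections step implicit and packages the coherence check as a thick-subcategory argument (the subcategory of $X$ with $\MP_*(X)$ finitely presented is thick and contains $S_{(p)}$, hence all finite $(p)$-local spectra)---but the substance is the same.
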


\begin{proof}
This follows from Theorem \ref{CCT} and the fact that the quasi-coherent sheaves $\pi_t(\sO_{\M}\o X)$ on $\mathcal M^\heart_\mathrm{FG}\o_{\mathbf Z} \mathbf Z_{(p)}$ coherent for every $t\in\mathbf Z$. Indeed, by definition of homotopy sheaves \cite[Definition 1.4.7]{ChromaticCartoon}, and using the flat cover $\Spec(\MP_{(p)})\to \M\o S_{(p)}$ of (a $(p)$-local version of) \cite[Lemma 2.3.5]{ChromaticCartoon}, the quasi-coherent sheaf $\pi_t(\sO_{\M}\o X)$ is coherent if and only if $\mathrm{MP}_i(X)$ is a coherent $\pi_0(\mathrm{MP}_{(p)}) = L_{(p)}$-module. Consider the full subcategory $\mathcal C\subseteq\Sp_{(p)}$, spanned by all such $(p)$-local spectra for which $\pi_t(\sO_{\M}\o X)$ is a finitely presented $L_{(p)}$-module. The $(p)$-complete Lazard ring $L\simeq \mathbf Z_{(p)}[x_1, x_2, \ldots]$ is a coherent commutative ring by \cite[Example  7.2.4.14]{HA}, hence \cite[Lemma 7.2.4.15]{HA} implies that $\mC$ is a thick subcategory of $\Sp_{(p)}$. It also contains the $(p)$-local sphere $S_{(p)}$, hence it must contain the thick subcategory spanned by it. But that is precisely the $\i$-category of finite $(p)$-local spectra.
\end{proof}

\subsection{The monochromatic layer and $K(n)$-local spectra}
Recall e.g.\ \cite[Lecture 34]{Lurie Chromatic} from that the \textit{$n$-th monochromatic layer} of a spectrum $X$ is defined to be the fiber of the chromatic filtration
$$
M_nX :=\fib(L_nX\to L_{n-1}X).
$$
The functor $M_n$ is an idempotent projector onto the full subcategory $M_n\Sp\subseteq L_n\Sp$ of \textit{monochromatic spectra of height $n$}, i.e.\ of $E(n)$-local spectra for which the canonical map $X\to M_nX$ is an equivalence. This is equivalent to the spectrum $X$ being both $E(n)$-local and $E(n-1)$-acyclic. Thanks to both $L_n$ and $L_{n-1}$ being smashing localizations, it follows that $M_nX\simeq X\o M_n S,$ and in particular, the subcategory $M_n\Sp\subseteq\Sp$ inherits a symmetric monoidal structure from the smash product of spectra.

\begin{prop}\label{monochrom from qcoh}
The adjunction of Theorem \ref{Chromatic localization is restriction}
exhibits an equivalence of  symmetric monoidal $\i$-categories
$
\QCoh_{\mathcal M^{\heart, =n}_\mathrm{FG}}(\mathcal M^{\mathrm{or}, \le n}_\mathrm{FG})\simeq M_n\mathrm{Sp}
$
between quasi-coherent sheaves on $\mathcal M^{\mathrm{or}, \le n}_\mathrm{FG},$ supported along the reduced closed substack $\mathcal M_\mathrm{FG}^{\heart, =n}\subseteq\mathcal M_\mathrm{FG}^{\heart, \le n}$, and monochromatic spectra of height $n$. 
\end{prop}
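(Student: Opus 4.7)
The plan is to deduce Proposition \ref{monochrom from qcoh} by restricting the already-established symmetric monoidal equivalence of Theorem \ref{Chromatic localization is restriction} to the appropriate pair of full subcategories, and checking that the restriction identifies them. Since both sides are defined by a vanishing condition, essentially all the work is in tracing through what that vanishing becomes on the opposite side of the equivalence.

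First I would use Proposition \ref{semi-orthog decomp}, applied to the open immersion $\mathcal M^{\mathrm{or}, \le n-1}_\mathrm{FG} \hookrightarrow \mathcal M^{\mathrm{or}, \le n}_\mathrm{FG}$ with reduced closed complement $\mathcal M^{\heart, =n}_\mathrm{FG}$, to characterize $\QCoh_{\mathcal M^{\heart, =n}_\mathrm{FG}}(\mathcal M^{\mathrm{or}, \le n}_\mathrm{FG})$ as the full subcategory of sheaves $\sF$ on $\mathcal M^{\mathrm{or}, \le n}_\mathrm{FG}$ with $\sF\vert_{\mathcal M^{\mathrm{or}, \le n-1}_\mathrm{FG}} \simeq 0$. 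Since the quasi-coherent restriction functor is the composite of the quasi-coherent pullback of Theorem \ref{Chromatic localization is restriction} for height $n$ with its inverse for height $n-1$, the vanishing of $\sF\vert_{\mathcal M^{\mathrm{or}, \le n-1}_\mathrm{FG}}$ corresponds under Theorem \ref{Chromatic localization is restriction} to the vanishing of the image of $X\in L_n\Sp$ under the composite $L_n\Sp \hookrightarrow \Sp_{(p)} \xrightarrow{L_{n-1}} L_{n-1}\Sp$. Equivalently — and this is the content of Corollary \ref{L_n as restriction} applied to the sheaf $\sF \simeq \sO_{\mathcal M^{\mathrm{or},\le n}_\mathrm{FG}} \o X$ and then restricted further to $\mathcal M^{\mathrm{or}, \le n-1}_\mathrm{FG}$ — this vanishing is the condition $L_{n-1} X \simeq 0$. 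Combined with $L_n$-locality of $X$, that is precisely the condition that $X\in M_n\Sp$.

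Assembling these identifications, the equivalence of Theorem \ref{Chromatic localization is restriction} restricts to an equivalence $\QCoh_{\mathcal M^{\heart, =n}_\mathrm{FG}}(\mathcal M^{\mathrm{or}, \le n}_\mathrm{FG}) \simeq M_n\Sp$ of $\i$-categories. For the symmetric monoidal enhancement, note that $\QCoh_{\mathcal M^{\heart, =n}_\mathrm{FG}}(\mathcal M^{\mathrm{or}, \le n}_\mathrm{FG})$ inherits a symmetric monoidal structure from $\QCoh(\mathcal M^{\mathrm{or}, \le n}_\mathrm{FG})$ since it is closed under tensor products (if $\sF\vert_U \simeq 0$ then $(\sF\o \sG)\vert_U \simeq \sF\vert_U \o \sG\vert_U \simeq 0$), with unit $\Gamma_{\mathcal M^{\heart, =n}_\mathrm{FG}}(\sO)$; similarly $M_n\Sp \subseteq L_n\Sp$ is closed under smash product and has unit $M_n S$. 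The equivalence intertwines these structures because it is a restriction of a symmetric monoidal equivalence, and the units match under the described correspondence.

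The main thing to get right will be the correspondence between restriction to the open substack on the sheaf side and $L_{n-1}$-localization on the spectrum side, but this is essentially just naturality of Theorem \ref{Chromatic localization is restriction} across different $n$, together with Corollary \ref{L_n as restriction}. Once that is in hand, the rest is a bookkeeping exercise, and no new chromatic input (such as a further appeal to the Smash Product Theorem beyond what already went into Theorem \ref{Chromatic localization is restriction}) is needed.
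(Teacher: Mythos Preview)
Your proposal is correct and follows essentially the same approach as the paper: restrict the equivalence of Theorem \ref{Chromatic localization is restriction} to the relevant full subcategories and check they match. The only cosmetic difference is that the paper phrases the matching via the colocalization side---it states and proves a separate lemma identifying $\Gamma_{\mathcal M^{\heart,=n}_\mathrm{FG}}$ with $M_n$ through the cofiber sequence of Corollary \ref{corollary semi-orthogonal sequence}---whereas you phrase it via the complementary localization side, using directly that $j^*$ to $\mathcal M^{\mathrm{or},\le n-1}_\mathrm{FG}$ corresponds to $L_{n-1}$ by Corollary \ref{L_n as restriction}; these are the two halves of the same semi-orthogonal decomposition and the arguments are interchangeable.
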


\begin{proof}
By Theorem \ref{Chromatic localization is restriction}, we already know that said adjunction is a symmetric monoidal adjoint equivalence of $\i$-categories $\QCoh(\Mn)\simeq L_n\Sp$. Both $\QCoh_{\Men}(\Mn)\subseteq\QCoh(\Mn)$ and $L_n\Sp\subseteq\Sp$ are defined analogously: they are the full subcategories of objects $\sF$ and $X$ respectively, for which the canonical map, $\Gamma_{\Mn}(\sF)\to \sF$ and $M_n X\to L_nX\simeq X$ respectively, is an equivalence. The result now follows from the next Lemma, relating the two functors in question.
\end{proof}

\begin{lemma}\label{monochrom from qcoh intro}
For any $(p)$-local spectrum $X$, the canonical map of spectra
$$
M_nX\to \Gamma_{\mathcal M_\mathrm{FG}^{\heart, =n}}(\mathcal M^{\mathrm{or}, \le n}_\mathrm{FG}; \sO_{\M}\o X).
$$
is an equivalence, expressing the $n$-th monochromatic layer of $X$ as supported sections of quasi-coherent sheaves on $\Mn$.
\end{lemma}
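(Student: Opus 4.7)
The strategy is to apply the semi-orthogonal decomposition of Section \ref{Section 1} to the open immersion of non-connective spectral stacks $j: \mathcal M^{\mathrm{or}, \le n-1}_\mathrm{FG} \hookrightarrow \mathcal M^{\mathrm{or}, \le n}_\mathrm{FG}$. By construction of the stacks $\mathcal M^{\mathrm{or}, \le k}_\mathrm{FG}$ via decompression (Definition \ref{Def of MFGorn} and Lemma \ref{Lemma 1-line}), passing to underlying ordinary stacks identifies the complementary reduced closed substack of this open immersion with the reduced closed complement of $\mathcal M^{\heart, \le n-1}_\mathrm{FG}\subseteq\mathcal M^{\heart, \le n}_\mathrm{FG}$, which is $\mathcal M^{\heart, =n}_\mathrm{FG}$ by definition. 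Hence the supported sections functor appearing in the statement is precisely the one that Section \ref{Section 1} supplies for this open immersion.

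Applying Corollary \ref{corollary semi-orthogonal sequence} to the quasi-coherent sheaf $\sF := (\sO_{\M} \otimes X)\vert_{\mathcal M^{\mathrm{or}, \le n}_\mathrm{FG}}$ then produces a cofiber sequence
$$
\Gamma_{\mathcal M^{\heart, =n}_\mathrm{FG}}(\sF) \to \sF \to \sF\vert_{\mathcal M^{\mathrm{or}, \le n-1}_\mathrm{FG}}
$$
in the stable $\infty$-category $\QCoh(\mathcal M^{\mathrm{or}, \le n}_\mathrm{FG})$. Taking global sections preserves this fiber sequence, since $\Gamma(\mathcal M^{\mathrm{or}, \le n}_\mathrm{FG}; -)$ is a right adjoint and $\Sp$ is stable.

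Next I would identify the last two terms. By Corollary \ref{L_n as restriction} we have $\Gamma(\mathcal M^{\mathrm{or}, \le n}_\mathrm{FG}; \sO_{\M}\otimes X)\simeq L_n X$ and $\Gamma(\mathcal M^{\mathrm{or}, \le n-1}_\mathrm{FG}; \sO_{\M}\otimes X)\simeq L_{n-1}X$; by Proposition \ref{Chromatic filtration = chromatic filtration}, the map between them induced by the restriction $\sF\mapsto \sF\vert_{\mathcal M^{\mathrm{or}, \le n-1}_\mathrm{FG}}$ agrees with the usual chromatic structure map $L_nX\to L_{n-1}X$. The resulting fiber sequence of spectra therefore takes the form
$$
\Gamma_{\mathcal M^{\heart, =n}_\mathrm{FG}}(\mathcal M^{\mathrm{or}, \le n}_\mathrm{FG}; \sO_{\M}\otimes X) \to L_n X \to L_{n-1}X,
$$
and comparison with the defining fiber sequence $M_nX\to L_nX\to L_{n-1}X$ yields the asserted equivalence.

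The only step requiring any care is the identification of the geometrically-induced restriction map with the usual chromatic tower map $L_nX \to L_{n-1}X$; but this is exactly what Proposition \ref{Chromatic filtration = chromatic filtration} already supplies, so the argument reduces to assembling previously established identifications.
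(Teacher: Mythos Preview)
Your proof is correct and follows essentially the same strategy as the paper: obtain the cofiber sequence from Corollary \ref{corollary semi-orthogonal sequence}, then apply global sections and invoke Corollary \ref{L_n as restriction} to identify the last two terms with $L_nX$ and $L_{n-1}X$.

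The only difference is a minor organizational one. You apply Corollary \ref{corollary semi-orthogonal sequence} directly on $\mathcal M^{\mathrm{or}, \le n}_\mathrm{FG}$ for the open immersion $\mathcal M^{\mathrm{or}, \le n-1}_\mathrm{FG}\hookrightarrow \mathcal M^{\mathrm{or}, \le n}_\mathrm{FG}$, whose closed complement is already $\mathcal M^{\heart, =n}_\mathrm{FG}$. The paper instead forms the cofiber sequence upstairs on $\M\otimes S_{(p)}$ for the open immersion $\mathcal M^{\mathrm{or}, \le n-1}_\mathrm{FG}\hookrightarrow \M\otimes S_{(p)}$ (closed complement $\mathcal M^{\heart, \ge n}_\mathrm{FG}$), restricts along $j_n$, and then invokes Proposition \ref{support and qc pullback} to identify $\Gamma_{\mathcal M^{\heart, \ge n}_\mathrm{FG}}(\sF)\vert_{\mathcal M^{\mathrm{or}, \le n}_\mathrm{FG}}$ with $\Gamma_{\mathcal M^{\heart, =n}_\mathrm{FG}}(\sF\vert_{\mathcal M^{\mathrm{or}, \le n}_\mathrm{FG}})$. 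Your route is slightly more economical since it sidesteps that base-change step; the paper's route makes the relation between the two supported-sections functors explicit, which is convenient elsewhere in Section \ref{Section 3}.
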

\begin{proof}
For any quasi-coherent sheaf $\sF$ on $\M\o S_{(p)}$, Proposition \ref{support and qc pullback} gives for an arbirary quasi-coherent sheaf $\sF$ on $\M\o S_{(p)}$ a canonical equivalence
$$
\Gamma_{\mathcal M^{\heart, =n}_\mathrm{FG}}(\sF\vert_{\mathcal M^{\mathrm{or}, \le n}_\mathrm{FG}})\simeq \Gamma_{\mathcal M^{\heart, \ge n}_\mathrm{FG}}(\sF)\vert_{\mathcal M^{\mathrm{or}, \le n}_\mathrm{FG}}
$$
in the $\i$-category $\QCoh(\mathcal M^{\heart, \le n}_\mathrm{FG})$. At the same time, Corollary \ref{corollary semi-orthogonal sequence} gives rise to a cofiber sequence
$$
\Gamma_{\mathcal M^{\heart, \ge n}_\mathrm{FG}}(\sF)\to \sF\to \sF\vert_{\mathcal M^{\mathrm{or}, \ge n-1}_\mathrm{FG}}
$$
of quasi-coherent sheaves on $\M\o S_{(p)}$. Since quasi-coherent pullback along the open immersion $\mathcal M^{\mathrm{or},\le n}_\mathrm{FG}\to\M\o S_{(p)}$ is an exact functor, it  produces from the previous one another cofiber sequence
$$
\Gamma_{\mathcal M^{\heart, =n}_\mathrm{FG}}(\sF\vert_{\mathcal M^{\mathrm{or}, \le n}_\mathrm{FG}})\to \sF\vert_{\mathcal M^{\mathrm{or}, \le n}_\mathrm{FG}}\to \sF\vert_{\mathcal M_\mathrm{FG}^{\mathrm{or}, \le n-1}}
$$
in the $\i$-category $\QCoh(\mathcal M^{\heart, \le n}_\mathrm{FG})$. Under the equivalence of the latter $\i$-category via the global sections functor $\Gamma(\mathcal M^{\mathrm{or}, \le n}_\mathrm{FG}; -)$ with the $\i$-category $L_n\Sp$, and the explicit formula for chromatic localization from Corollary \ref{L_n as restriction}, we obtain for the constant quasi-coherent sheaf $\sF :=\sO_{\M}\o X$ on $\M\o S_{(p)}$ the fiber sequence of spectra
$$
\Gamma_{\mathcal M^{\heart, =n}_\mathrm{FG}}(\mathcal M^{\mathrm{or}, \le n}_\mathrm{FG}; \sO_{\M}\o X)\to L_n X\to L_{n-1}X,
$$
exhibiting its left-most term as the $n$-th monochromatic layer of $X$ as desired.
\end{proof}

An important result in chromatic homotopy theory, e.g.~\cite[Lecture 34, Proposition 12]{Lurie Chromatic} is that monochromatic spectra of height $n$ may equivalently be understood as $K(n)$-local spectra. The latter is a completeness condition, and we will explain how it fits into the comparison between the quasi-coherent sheaves with prescribed support conditions and quasi-coherent sheaves on formal completions as worked out in Proposition \ref{all about formal QCoh}.

\begin{prop}\label{mono vs K(n)-loc 1}
The adjunction between completion and supported sections on quasi-coherent sheaves
$$
(-)^\wedge_{\mathcal M^{\heart, =n}_\mathrm{FG}} : \QCoh_{\mathcal M^{\heart, =n}_\mathrm{FG}}(\mathcal M^{\mathrm{or}, \le n}_\mathrm{FG})\simeq\QCoh\big ((\mathcal M^{\mathrm{or}, \le n}_\mathrm{FG})^\wedge_{\mathcal M^{\heart, =n}_\mathrm{FG}}\big ) : \Gamma_{\mathcal M^{\heart, =n}_\mathrm{FG}}
$$
is a symmetric monoidal adjoint equivalence of $\i$-categories.
\end{prop}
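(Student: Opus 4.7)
The plan is to reduce this statement directly to Proposition \ref{all about formal QCoh}\ref{tri}, applied with the data $X := \mathcal M^{\mathrm{or}, \le n}_\mathrm{FG}$ and $U := \mathcal M^{\mathrm{or}, \le n-1}_\mathrm{FG}$. First I would verify the hypotheses of that Proposition. Both $U$ and $X$ are geometric non-connective spectral stacks by the Corollary following Proposition \ref{Landweber cover}. The morphism $U \to X$ is an open immersion: it is obtained via Proposition \ref{Decompressing opens} by decompressing the open immersion of ordinary stacks $\mathcal M^{\heart, \le n-1}_\mathrm{FG} \subseteq \mathcal M^{\heart, \le n}_\mathrm{FG}$, using that the pullback defining $U$ can be composed with the one defining $X$ inside $\M \otimes S_{(p)}$.

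Next I would identify the reduced closed complement. By the very definition of the stack of formal groups of exact height $n$, we have $X^\heart - U^\heart = \mathcal M^{\heart, \le n}_\mathrm{FG} - \mathcal M^{\heart, \le n-1}_\mathrm{FG} = \mathcal M^{\heart, =n}_\mathrm{FG}$. This closed substack is of finite presentation, since on any affine flat cover of $\mathcal M^{\heart, \le n}_\mathrm{FG}$ it is cut out by (the image of) the Landweber ideal $\mathfrak I_n = (p, v_1, \ldots, v_{n-1})$, which is finitely generated by Remark \ref{Landideal}.

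With the hypotheses in place, Proposition \ref{all about formal QCoh}\ref{tri} applies directly and yields the symmetric monoidal adjoint equivalence of $\i$-categories claimed in the statement. The only residual bookkeeping, immediate from Remark \ref{completion as pullback}, is to check that the completion/supported-sections adjunction displayed in the statement coincides with the pullback/pushforward adjunction along the canonical map $(\mathcal M^{\mathrm{or}, \le n}_\mathrm{FG})^\wedge_{\mathcal M^{\heart, =n}_\mathrm{FG}} \to \mathcal M^{\mathrm{or}, \le n}_\mathrm{FG}$ produced by Proposition \ref{all about formal QCoh}. There is no real obstacle here: the substantive work was done in Section \ref{Section 2}, and the present proposition is a direct specialization of it.
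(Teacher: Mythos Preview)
Your proposal is correct and follows essentially the same approach as the paper: both reduce directly to Proposition \ref{all about formal QCoh}\ref{tri} with $X=\mathcal M^{\mathrm{or},\le n}_\mathrm{FG}$, and the only nontrivial hypothesis to verify is that the reduced closed complement $\mathcal M^{\heart,=n}_\mathrm{FG}\subseteq\mathcal M^{\heart,\le n}_\mathrm{FG}$ is of finite presentation, which both you and the paper check via the finite generation of the Landweber ideal $\mathfrak I_n^{\w{\G}}=(p,v_1,\ldots,v_{n-1})$.
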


\begin{proof}
By Proposition \ref{all about formal QCoh}, it suffices to show that for geometric non-connective spectral stack $\mathcal M^{\mathrm{or}, \le n}_\mathrm{FG}$,  
the map $\mathcal M_\mathrm{FG}^{\heart, =n}\to\mathcal M_\mathrm{FG}^{\heart, \le n}$ is a finitely presented closed immersion of a reduced closed substack. The only part of this that we do not yet know is that the finite presentation assertion. To check it, it suffices to prove it for pullback along any map from an ordinary affine $\Spec(R)\to \mathcal M^{\heart, \le n}_\mathrm{FG}$. Such a map corresponds to a formal group $\w{\G}$ of height $\le n$ over the $(p)$-local ordinary commutative ring $R$, and the pullback $\Spec(R)\times_{\mathcal M^{\heart, \le n}_\mathrm{FG}}\mathcal M^{\heart, = n}_\mathrm{FG}$ is the closed locus of points where it  has height exactly $n$. By the definition of the $n$-th Landweber ideal from Remark \ref{Landideal}, this is precisely the vanishing locus of $\mathfrak I^{\w{\G}}_n\subseteq R$. But this is indeed a finitely generated ideal, since we can (after a number of choices) write $\mathfrak I^{\w{\G}}_n=(p, v_1, \ldots, v_n)$, see \cite[Proposition 4.4.10]{Elliptic 2}.
\end{proof}

In order to (directly, i.e.\ not using the above result) related the right-hand side of the equivalence of Proposition \ref{mono vs K(n)-loc 1} with the $K(n)$-local stable category, we will need to gain a better understanding of the algebro-geometric objects involved. This will be accomplished throughout the next couple lemmas.

In what follows, let  $\Spec(\overline{\mathbf F}_p)\to \Men$ be a (unique up to isomorphism) map of formal stacks, classifying a formal group $\w{\G}$ of exact height $n$ over the algebraic closure $\overline{\mathbf F}_p$ of the prime field $\mathbf F_p$. 
The \textit{(big or extended) Morava stabilizer group} $\mathbb G_n$ is the profinite group viewed as a group scheme; see \cite[Remark 2.12]{DevHop}. As also discussed there, we view it either as existing over $\Spec(\mathbf Z)$ in the realm of ordinary algebraic geometry, or over $\Spec(S)$ in the realm of spectral algebraic geometry, to which we extend it by decompression, i.e.\ by pre-composing with the functor $\pi_0:\CAlg\to\CAlg^\heart$.

\begin{lemma}\label{lemma ichi}
 The induced map of formal completions $(\Mn)^\wedge_{\Spec(\F)}\to (\Mn)^\wedge_{\Men}$ in the $\i$-category $\Fun(\CAlg_\mathrm{ad}^\mathrm{cpl}, \mS)$ exhibits an equivalence
$$(\Mn)^\wedge_{\Men}\simeq (\Mn)^\wedge_{\Spec(\F)}/\mathbb G_n$$
with the quotient under an action of the Morava stabilizer group $\mathbb G_n$.
\end{lemma}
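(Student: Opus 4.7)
The plan is to combine Theorem \ref{orbit picture at height n} with colimit- and finite-limit-preservation properties of formal completion, so as to realize $(\Mn)^\wedge_{\Men}$ via descent along the cover $\Spec(\F) \to \Men$. By Theorem \ref{orbit picture at height n}, this map is an fpqc effective epimorphism of ordinary stacks, whose Čech nerve is the bar diagram $[k] \mapsto \Spec(\F) \times \mathbb G_n^k$ in $\Shv_{\mathrm{fpqc}}^\heart$, with geometric realization $\Men$.

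The first step is to verify that the functor $K \mapsto (\Mn)^\wedge_K$, sending an ordinary stack with a map to $\Mon$ to its formal completion in $\Fun(\CAlg_\mathrm{ad}^\mathrm{cpl}, \mS)$, preserves both colimits and finite limits. Unwinding Remark \ref{fc is decompress}, this functor fits into a pullback $(\Mn)^\wedge_K \simeq \Mn \times_{\Mon \circ \pi_0} ((\Mon)^\wedge_K \circ \pi_0)$, where the only $K$-dependent factor $K \mapsto (\Mon)^\wedge_K$ preserves colimits and finite limits by Remark \ref{de Rham limitcolimit} and the explicit formula of Remark \ref{fc explicit}. Universality of colimits in the $\i$-topos $\Fun(\CAlg_\mathrm{ad}^\mathrm{cpl}, \mS)$ then ensures the same for the pullback. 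Applied to the Čech nerve of $\Spec(\F) \to \Men$, this yields an effective epimorphism $(\Mn)^\wedge_{\Spec(\F)} \to (\Mn)^\wedge_{\Men}$ whose Čech nerve has $k$-th simplex $(\Mn)^\wedge_{\Spec(\F) \times \mathbb G_n^k}$, with geometric realization $(\Mn)^\wedge_{\Men}$.

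The remaining task --- and the main obstacle --- is to identify this simplicial object with the bar construction for a $\mathbb G_n$-action on $(\Mn)^\wedge_{\Spec(\F)}$: that is, to produce natural simplex-wise equivalences $(\Mn)^\wedge_{\Spec(\F) \times \mathbb G_n^k} \simeq (\Mn)^\wedge_{\Spec(\F)} \times \mathbb G_n^k$ intertwining face and degeneracy maps, where the action is the one induced by functoriality of formal completion from the action of $\mathbb G_n = \underline{\mathrm{Aut}}(\w{\G}_0)$ on $\Spec(\F)$ over $\Mon$. The subtlety is that $\mathbb G_n$ is profinite rather than finite, while the formal completion explicitly involves a filtered colimit over ideals of definition (Remark \ref{fc explicit}). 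The cleanest path I see is to write $\mathbb G_n$ as a cofiltered limit of finite étale group schemes, apply the étale base-change compatibility of Remark \ref{remark about etale base-change} at each finite stage to pull the finite factor out of the completion, and then commute the cofiltered limit past the filtered colimit defining formal completion; this aligns with the analysis of the $\mathbb G_n$-action on $\Spf(E_n) \simeq (\Mn)^\wedge_{\Spec(\F)}$ given in \cite[Proposition 2.7]{DevHop}. Once this identification is secured, the geometric realization of the Čech nerve is by definition the quotient $(\Mn)^\wedge_{\Spec(\F)}/\mathbb G_n$, giving the desired equivalence.
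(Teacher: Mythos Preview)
Your overall strategy matches the paper's: use Theorem \ref{orbit picture at height n}, push the quotient presentation through formal completion via colimit- and finite-limit-preservation, then decompress via Remark \ref{fc is decompress}. The gap is in your identification of the \v{C}ech nerve. Theorem \ref{orbit picture at height n} gives $\Men \simeq \Spec(\F)/\underline{\mathrm{Aut}}(\w{\G}_0)$, where $\underline{\mathrm{Aut}}(\w{\G}_0) = \Spec(\F)\times_{\Men}\Spec(\F)$ is the automorphism \emph{group scheme}; the \v{C}ech nerve therefore has simplices built from $\underline{\mathrm{Aut}}(\w{\G}_0)$, not from $\mathbb G_n \times \Spec(\F)$. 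Your equation ``$\mathbb G_n = \underline{\mathrm{Aut}}(\w{\G}_0)$'' conflates a profinite group (viewed as a constant group scheme over $\mathbf Z$) with a genuine group scheme over $\F$, and these are not equivalent in $\Shv_\mathrm{fpqc}^\heart$. They only become equivalent after passing to de Rham spaces restricted to complete adic rings: the equivalence $\underline{\mathrm{Aut}}(\w{\G}_0)_\mathrm{dR} \simeq \mathbb G_n \times \Spec(\F)_\mathrm{dR}$ is precisely \cite[Lemma 2.11]{DevHop}, which the paper invokes explicitly (see also Remark \ref{honesty}). This step also absorbs the passage from a quotient formed over $\Spec(\F)$ to one formed over $\Spec(\mathbf Z)$, picking up the Galois part of the extended Morava stabilizer group.

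Consequently your ``main obstacle'' is mislocated. The real issue is not extracting a constant profinite factor from inside a formal completion --- which is what your \'etale base-change and limit/colimit interchange would address --- but rather why the group scheme controlling the \v{C}ech nerve becomes constant upon formal completion. Once you invoke \cite[Lemma 2.11]{DevHop} for that, there is nothing left to do: the quotient description on de Rham spaces decompresses directly, as in the paper. (As an aside, commuting a cofiltered limit past a filtered colimit, as you propose, is not automatic and would itself need justification.)
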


\begin{proof}
We first prove the analogous assertion on the level of ordinary stacks.
By Definition \ref{completion def}, the adic incompletion of the moduli $\Mon$ along $\Men$ is given by
$$
(\Mon)_{\Men}\simeq \Mon \times_{(\Mon)_\mathrm{dR}}(\Men)_\mathrm{dR}.
$$
The map of ordinary stacks $\Spec(\overline{\mathbf F}_p)\to \Men$ gives rise by the classical Theorem \ref{orbit picture at height n} to an equivalence with the quotient stack $\Men\simeq \Spec(\overline{\mathbf F}_p)/\underline{\mathrm{Aut}}(\w{\G})$, under the automorphism group scheme $\underline{\mathrm{Aut}}(\w{\G})$ of the exact height $n$ formal group $\w{\G}$ over $\F$. The quotient here is taken in the $\i$-category of ordinary stacks over $\Spec(\overline{\mathbf F}_p)$. Since the de Rham functor preserves both colimits and finite limits, this implies an equivalence of de Rham spaces $(\Men)_\mathrm{dR}\simeq\Spec(\F)_\mathrm{dR}/\underline{\mathrm{Aut}}(\w{\G})_\mathrm{dR}$, with the quotient on the right formed over $\Spec(\F).$ Consequently
\begin{eqnarray*}
(\Mon)_{\Men}
&\simeq&
\Mon \times_{(\Mon)_\mathrm{dR}}(\Men)_\mathrm{dR}\\
&\simeq&
\Mon\times_{(\Mon)_\mathrm{dR}}\Spec(\F)_{\mathrm{dR}}/\underline{\mathrm{Aut}}(\w{\G})_\mathrm{dR}\\
&\simeq&
(\Mon)_{\Spec(\F)}/\underline{\mathrm{Aut}}(\w{\G})_\mathrm{dR},
\end{eqnarray*}
where the adic pre-completion in the final term is taken along the composite map $\Spec(\F)\to\Men\to\Mon$.
In \cite[Lemma 2.11]{DevHop}, we showed (see Remark \ref{honesty} that, when restricting all the functors to complete adic $\E$-rings $\CAlg_\mathrm{ad}^\mathrm{cpl}\subseteq\CAlg_\mathrm{ad}$, there is a canonical equivalence
\begin{equation}\label{a lemma borrowed}
\mathrm{Aut}(\w{\G})_\mathrm{dR}\simeq \mathbb G_n\times\Spec(\F)_\mathrm{dR}.
\end{equation}
 Since we defined formal completion of non-connective adic stacks in Definition \ref{real def of formal completion} precisely as restriction to the subcategory $\CAlg_\mathrm{ad}^\mathrm{cpl}\subseteq\CAlg_\mathrm{ad}$, we obtain by combining all of the above an equivalence of formal completions
$$
(\Men)^\wedge_{\M}\simeq (\Mon)_{\Spec(\F)}^\wedge/\mathbb G_n,
$$
with the quotient on the right formed `absolutely' i.e.\ over $\Spec(\mathbf Z)$.
Since we get from Remark \ref{fc is decompress} that
$$
(\Mn)^\wedge_{\Men}\simeq \Mn\times_{\Men\circ\pi_0} (\Men)^\wedge_{\M},
$$
and the same for formal completion along $\Spec(\F)$, the desired equivalence follows.
\end{proof}

\begin{remark}\label{honesty}
Technically, we proved the equivalence \eqref{a lemma borrowed} in \cite[Lemma 2.11]{DevHop} only when restricted to a smaller subcategory of complete adic $\E$-rings $\CAlg^\mathrm{cN}_{/\F}\subseteq\CAlg^\mathrm{cpl}_\mathrm{ad}$, see \cite[Definition 2.1]{DevHop}. But since the Lubin-Tate spectrum $E_n = E(\F, \w{\G})$ belongs to this subcategory, the simplicial presentation of $\Mn$ from Remark \ref{Cech nerve for len with LT} shows that all the functors from $\CAlg_\mathrm{ad}^\mathrm{cpl}$ in the preceding discussion are left Kan extended from this subcategory, legitimizing the above proof.
\end{remark}

\begin{lemma}\label{lemma ni}
Let $\w{\G}_0$ be a formal group of (exact) height $n$ over a perfect field $\kappa$ of characteristic $p$. 
The map of non-connective spectral stacks $\Spec(E(\kappa, \w{\G}_0))\to\Mn$ exhibits an equivalence
$$
(\Mn)^\wedge_{\Spec(\kappa)}\simeq\Spf (E(\kappa, \w{\G}_0)).
$$
in the $\i$-category $\Fun(\CAlg_\mathrm{ad}^\mathrm{cpl}, \mS)$.
\end{lemma}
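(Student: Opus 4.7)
The plan is to reduce the statement to Lurie's spectral Lubin--Tate deformation theorem \cite[Theorem 5.1.5, Remark 6.0.7]{Elliptic 2}, which the excerpt has already invoked in the special case $\kappa = \overline{\mathbf F}_p$. The generalization to an arbitrary perfect field of characteristic $p$ is part of Lurie's theorem, so the proof amounts to translating our formal-geometric setup into Lurie's deformation-theoretic framework.

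First, I would apply the decompression formula of Remark \ref{fc is decompress} to rewrite
$$
(\Mn)^\wedge_{\Spec(\kappa)} \simeq \Mn \times_{(\Mon \circ \pi_0)} \big((\Mon)^\wedge_{\Spec(\kappa)} \circ \pi_0\big),
$$
thereby reducing the problem to a description of the classical formal completion of $\Mon$ at the exact-height-$n$ point classified by $\w{\G}_0$. Classical Lubin--Tate deformation theory identifies this completion with $\Spf(R_0)$, where $R_0 := \pi_0(E(\kappa, \w{\G}_0))$ is the Lubin--Tate deformation ring pro-representing deformations of $\w{\G}_0$.

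Second, I would unwind what the above pullback says on $A$-points for a complete adic $\E$-ring $A$, using the explicit formula of Remark \ref{fc explicit}: the space $(\Mn)^\wedge_{\Spec(\kappa)}(A)$ parametrizes pairs consisting of a complex periodic orientation on the $(p)$-local $\E$-ring $A$ (making $\Spec(A)$ a point of $\Mn$), together with a compatible deformation of $\w{\G}_0$ to $\pi_0(A)$ in the classical sense. Lurie's theorem precisely asserts that $E(\kappa, \w{\G}_0)$ corepresents this functor on complete adic $\E$-rings, giving a natural equivalence with $\Map_{\CAlg_\mathrm{ad}^\mathrm{cpl}}(E(\kappa, \w{\G}_0), A) \simeq \Spf(E(\kappa, \w{\G}_0))(A)$.

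The main obstacle will be a compatibility of conventions: Lurie's universal property for $E(\kappa, \w{\G}_0)$ is most naturally formulated on a smaller subcategory of complete adic $\E$-rings (compare Remark \ref{honesty}), so I would need to check that left Kan extension along the inclusion of this subcategory into $\CAlg_\mathrm{ad}^\mathrm{cpl}$ recovers the values of $\Spf(E(\kappa, \w{\G}_0))$ on arbitrary complete adic $\E$-rings. The argument is parallel to the legitimization given in Remark \ref{honesty} for the proof of Lemma \ref{lemma ichi}: since $E(\kappa, \w{\G}_0)$ itself lies in the smaller subcategory and the simplicial presentation of $\Mn$ in terms of $E_n$ (Remark \ref{Cech nerve for len with LT}) also lies there, every functor entering the comparison is left Kan extended as required.
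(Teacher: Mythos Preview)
Your overall strategy matches the paper's: both reduce to Lurie's spectral deformation theory from \cite[Theorem 5.1.5, Remark 6.0.7]{Elliptic 2}. However, you misstate what that result provides, and this hides the one substantive step in the argument.

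Lurie's theorem gives a universal property for the \emph{oriented universal deformation ring} $R^{\mathrm{or}}_{\w{\G}_0}$: its formal spectrum corepresents oriented deformations of $\w{\G}_0$, with no height restriction. The Lubin--Tate spectrum is then \emph{defined} as $E(\kappa,\w{\G}_0) := L_{K(n)} R^{\mathrm{or}}_{\w{\G}_0}$. So your assertion that ``Lurie's theorem precisely asserts that $E(\kappa,\w{\G}_0)$ corepresents this functor'' (the one with the height $\le n$ condition already built in via $\Mn$) is not what the cited results say. What is needed is an identification of the effect of $K(n)$-localization on the functor of points: namely, that for a \emph{complete} $(p)$-local adic $\E$-ring $A$, being $K(n)$-local is equivalent to the classical Quillen formal group $\w{\G}{}^{\CMcal Q_0}_A$ having height $\le n$. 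The paper supplies this via \cite[Theorem 4.5.2]{Elliptic 2}, which is the missing ingredient in your sketch. Once this is in place, one gets $\Spf(E(\kappa,\w{\G}_0)) \simeq \Mn \times_{\M\o S_{(p)}} \Spf\big((R^{\mathrm{or}}_{\w{\G}_0})_{(p)}\big)$, and the rest is the fiber-product manipulation you indicate.

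A secondary point: the ``main obstacle'' you flag about left Kan extension from a smaller subcategory is not actually needed here. The universal property of $R^{\mathrm{or}}_{\w{\G}_0}$ is stated for arbitrary complete adic $\E$-rings, and the paper's proof of this lemma works directly in $\CAlg_{\mathrm{ad}}^{\mathrm{cpl}}$ without the Kan-extension step (that step was specific to the profinite-group identification in Lemma \ref{lemma ichi}).
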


\begin{proof}
The Lubin-Tate $\E$-ring $E(\kappa, \w{\G}_0)$ is obtained by \cite[Theorem  5.1.5,  Remark  6.0.7]{Elliptic 2} as follows: it is the $K(n)$-localization of the oriented universal deformation $\E$-ring $R_{\w{\G}_0}^\mathrm{or}$, which in turn satisfies a universal property. That is, $\Spf(R_{\w{\G}_0}^\mathrm{or})$ classified oriented deformations of the formal group $\w{\G}_0$, i.e.\ for any complete adic $\E$-ring we have
$$
\Map_{\CAlg_\mathrm{ad}^\mathrm{cpl}}(R_{\w{\G}_0}^\mathrm{or},A)\simeq \varinjlim_{I\subseteq\pi_0(A)}\mathcal M^\mathrm{or}_\mathrm{FG}(A)\times_{\mathcal M^\heart_\mathrm{FG}(\pi_0(A)/I)} \{\w{\G}_0\},
$$
with the colimit ranging over all finitely generated ideals of definition $I\subseteq\pi_0(A)$. In light of our definition of formal completion, see Remark \ref{fc explicit}, we may identify this with the formal completion of the non-connective spectral stack of formal group $\mathcal M^\mathrm{or}_\mathrm{FG}$ along the map of ordinary stacks
$$
\Spec(\kappa)\xrightarrow{\eta_{\w{\G}_0}}\mathcal M^{\heart, =n}_\mathrm{FG}\to\mathcal M^\heart_\mathrm{FG}\simeq (\mathcal M^\mathrm{or}_\mathrm{FG})^\heart.
$$
Thus $\Spf(R^\mathrm{or}_{\w{\G}_0})\simeq (\M)^\wedge_{\Spec(\kappa)}$, and it remains to understand the effect of the $K(n)$-completion.
 Both the functors in the canonical map $\Spf(E(\kappa, \w{\G}_0))\to\Spf(R^\mathrm{or}_{\w{\G}_0})$ return the empty set for  all $\E$-ring which are not complex oriented, but the first one also does so unless the oriented $\E$-ring is $K(n)$-local.
  In fact, since Lubin-Tate spectra are always $(p)$-complete (as follows from the inspection of their coefficients), we may further factor the construction through $(p)$-completion, and rather  consider instead the map $\Spf(E(\kappa, \w{\G}_0))\to\Spf\big((R^\mathrm{or}_{\w{\G}_0})_{(p)}\big)$. 
  Since we are already assuming the adic $\E$-rings to be complete, it follows from \cite[Theorem 4.5.2]{Elliptic 2} that $K(n)$-locality of the $(p)$-local adic $\E$-rings will amount to their Quillen formal groups being of height $\le n$. That is to say, we find that
\begin{eqnarray*}
\Spf(E(\kappa, \w{\G}_0))
&\simeq&
\Mn \times_{\M\o S_{(p)}}\Spf\big( (R^\mathrm{or}_{\w{\G}_0})_{(p)}\big)\\
&\simeq&
\Mn \times_{\M\o S_{(p)}} (\M\o S_{(p)})^\wedge_{\Spec(\kappa)}\\
&\simeq &
\Mn \times_{\M\o S_{(p)}} \M\o S_{(p)} \times_{\big((\mathcal M^\heart_\mathrm{FG}\o_{\mathbf Z}\mathbf Z_{(p)})_\mathrm{dR}\circ \pi_0\big)} (\Spec(\kappa)_\mathrm{dR}\circ\pi_0)\\
&\simeq&
\Mn \times_{\big((\mathcal M^\heart_\mathrm{FG}\o_{\mathbf Z}\mathbf Z_{(p)})_\mathrm{dR}\circ \pi_0\big)} (\Spec(\kappa)_\mathrm{dR}\circ\pi_0).
\end{eqnarray*}
Both of the maps in the final fiber product factor through the de Rham-ification of $\Mon\to \Mo\o_{\mathbf Z}\mathbf Z_{(p)},$ composed with $\pi_0$. Since that is an open immersion, and so $\Mon\times_{\Mo\o_{\mathbf Z}\mathbf Z_{(p)}}\Mon\simeq \Mon$, the fiber product may be taken with either base. Hence
$$
\Spf(E(\kappa, \w{\G}_0))\simeq \Mn\times_{((\Mon)_\mathrm{dR}\circ \pi_0)}(\Spec(\kappa)_\mathrm{dR}\circ \pi_0)\simeq (\Mn)^\wedge_{\Spec(\kappa)},
$$
arriving at the desired equivalence.
\end{proof}

We are at last in a position to compare the $\i$-category of quasi-coherent sheaves, appearing on the right-hand side of the equivalence in Proposition \ref{mono vs K(n)-loc 1}, with the $K(n)$-local stable category.

\begin{theorem}\label{knloc}
The adjunction
$$
\sO_{(\Mn)^\wedge_{\Men}}\o - : \Sp \rightleftarrows \QCoh\big((\Mn)^\wedge_{\Men}\big) : \Gamma\big((\Mn)^\wedge_{\Men};-\big)
$$
induces a symmetric monoidal equivalence of $\i$-categories
$$
 \QCoh\big((\Mn)^\wedge_{\Men}\big)\simeq L_{K(n)}\Sp
$$
between the quasi-coherent sheaves on the formal completion of $\Mn$ along the closed ordinary substack $\Men\subseteq\Mon$, and $K(n)$-local spectra.
\end{theorem}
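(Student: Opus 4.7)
The plan is to assemble the equivalence as the composition of three symmetric monoidal equivalences already at our disposal. First I would apply Proposition \ref{mono vs K(n)-loc 1}, which gives a symmetric monoidal adjoint equivalence
$$
\QCoh\big((\Mn)^\wedge_{\Men}\big) \simeq \QCoh_{\Men}(\Mn),
$$
with completion and supported sections as the two adjoint functors. Next I would invoke Proposition \ref{monochrom from qcoh} to identify the right-hand side with the $\i$-category $M_n\Sp$ of height $n$ monochromatic spectra, as a symmetric monoidal equivalence induced from the adjunction $\sO_{\Mn}\o(-) \dashv \Gamma(\Mn;-)$ of Theorem \ref{Chromatic localization is restriction}. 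Finally, I would appeal to the classical result (e.g.\ \cite[Theorem 6.19]{Hovey-Strickland} or \cite[Lecture 33, Proposition 12]{Lurie Chromatic}), which provides a symmetric monoidal equivalence $M_n\Sp \simeq L_{K(n)}\Sp$ with inverse the $K(n)$-localization functor. Composing these three equivalences supplies the desired equivalence of $\i$-categories.

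To verify that this composite is induced by the adjunction written in the statement, I would trace the constant sheaf $\sO \o X$ on the formal completion, for $X$ a $(p)$-local spectrum, through the chain of identifications. By Proposition \ref{all about formal QCoh}, it corresponds under the first equivalence to the $\Men$-supported sheaf $\Gamma_{\Men}(\sO_{\Mn}\o X)$, whose global sections by Lemma \ref{monochrom from qcoh intro} recover the monochromatic layer $M_nX$, which under the Hovey-Strickland equivalence matches $L_{K(n)}X$. This same book-keeping yields the companion formula $L_{K(n)}X \simeq (\sO_{\Mn}\o X)^\wedge_{\Men}$.

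The main obstacle, given how much has already been subsumed by the earlier propositions, is essentially organisational: checking that the three symmetric monoidal structures are compatible (they are, since each equivalence in the chain is symmetric monoidal by construction), and that the stated adjunction is indeed the composite of the three adjunctions above. One delicate point worth flagging is the identification of the pullback $f^*$ along $f : (\Mn)^\wedge_{\Men}\to\Mn$ with completion $(-)^\wedge_{\Men}$, as opposed to merely restriction; this is precisely the content of Remark \ref{completion as pullback} combined with part \ref{tri} of Proposition \ref{all about formal QCoh}.

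An alternative, more \emph{a priori} path would invoke Lemmas \ref{lemma ichi} and \ref{lemma ni} to present the formal completion as $\Spf(E_n)/\mathbb G_n$, then identify $\QCoh(\Spf(E_n)/\mathbb G_n)$ with $K(n)$-local $\mathbb G_n$-equivariant $E_n$-modules via descent (using that $L_{K(n)}S \to E_n$ is descendable as a consequence of the Smash Product Theorem), and invoke Mathew's identification \cite[Proposition 10.10]{Mathew} of the latter with $L_{K(n)}\Sp$. This route is more labor-intensive but has the virtue of yielding the equivalences \eqref{eq1I} and \eqref{eq2I} as direct consequences, and of making the symmetric monoidal structure manifestly the $K(n)$-local smash product.
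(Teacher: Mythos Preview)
Your primary argument is correct, but it is not the route the paper takes. The paper's proof is precisely what you sketch as your ``alternative'': it combines Lemmas \ref{lemma ichi} and \ref{lemma ni} to identify the formal completion with $\Spf(E_n)/\mathbb G_n$, passes to quasi-coherent sheaves to obtain $\Mod_{E_n}^{h\mathbb G_n}$, and then invokes \cite[Corollary 2.17]{DevHop} or \cite[Proposition 10.10]{Mathew}. The text immediately preceding the theorem even signals this choice explicitly, announcing a proof that proceeds ``directly, i.e.\ not using the above result'' (meaning Proposition \ref{mono vs K(n)-loc 1}).

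The substantive difference is in what each argument takes as input versus output. Your route consumes the classical equivalence $M_n\Sp\simeq L_{K(n)}\Sp$ of Hovey--Strickland as an external ingredient. The paper's route avoids it, so that the final Corollary of the section, together with Propositions \ref{mono vs K(n)-loc 1} and \ref{monochrom from qcoh}, \emph{recovers} that equivalence as a consequence of the spectral algebro-geometric picture---this is one of the explicit goals stated in the Introduction. Your argument is logically sound and shorter, but it would make that last Corollary circular and would undercut the narrative that the chromatic equivalences emerge from the geometry rather than being fed in. Conversely, the paper's approach ties the theorem to the Devinatz--Hopkins description of the $K(n)$-local sphere, at the cost of relying on the more intricate Lemmas \ref{lemma ichi} and \ref{lemma ni}.
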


\begin{proof}
Combining Lemmas \ref{lemma ichi} and \ref{lemma ni}, we obtain an equivalence
$$
(\Mn)^\wedge_{\Men}\simeq \Spf(E_n)/\mathbb G_n.
$$
in the $\i$-category $\Fun(\CAlg^\mathrm{cpl}_\mathrm{ad}, \mS)$. Passage to the $\i$-categories of quasi-coherent sheaves induces an equivalence of $\i$-categories
$$
\QCoh\big((\Mn)^\wedge_{\Men}\big)\simeq \QCoh(\Spf(E_n)/\mathbb G_n)\simeq \Mod_{E_n}^{h\mathbb G_n}.
$$
The statement of the theorem now reduces to \cite[Corollary 2.17]{DevHop} or \cite[Proposition 10.10]{Mathew}.
\end{proof}

The following consequences  of the Theorem are straightforward, following analogously to how Corollaries  \ref{L_n as restriction} and \ref{C_n as local cohomology} follows from Theorem \ref{Chromatic localization is restriction}.

\begin{corollary}
In terms of the equivalence of $\i$-categories of Theorem \ref{Chromatic localization is restriction},  the $K(n)$-localization of a $(p)$-local spectrum $X$ corresponds to the completion in quasi-coherent sheaves $(\sO_{\Mn}\o X)^\wedge_{\Men}$.
\end{corollary}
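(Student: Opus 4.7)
The plan is to combine the two equivalences of $\i$-categories from Theorems \ref{Chromatic localization is restriction} and \ref{knloc}, linked by the identification of the completion functor with a quasi-coherent pullback. Writing $p:\Mn\to\Spec(S_{(p)})$ for the terminal map, $f:(\Mn)^\wedge_{\Men}\to\Mn$ for the canonical map of Remark \ref{completion as pullback}, and $q:=p\circ f$ for the terminal map out of the formal completion, I would match the three constant-sheaf constructions $p^*$, $q^*$, and $f^*\circ p^*$ via pullback functoriality.

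First I would invoke Theorem \ref{Chromatic localization is restriction} to identify a $(p)$-local spectrum $X$ (or, equivalently, its $L_n$-localization, which leaves $L_{K(n)}X$ unaffected) with the constant sheaf $p^*(X)\simeq \sO_{\Mn}\o X$ in $\QCoh(\Mn)$. Dually, Theorem \ref{knloc} identifies the constant sheaf $q^*(X)\simeq \sO_{(\Mn)^\wedge_{\Men}}\o X$ in $\QCoh((\Mn)^\wedge_{\Men})$ with $L_{K(n)}X$; as in Corollary \ref{L_n as restriction}, this identification extends from $K(n)$-local to arbitrary $(p)$-local inputs, because the constant-sheaf functor into $\QCoh((\Mn)^\wedge_{\Men})$ factors through $L_{K(n)}$-localization by virtue of the symmetric monoidal adjoint equivalence of Theorem \ref{knloc}. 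Finally, Remark \ref{completion as pullback} states that the completion functor $(-)^\wedge_{\Men}$ is literally the quasi-coherent pullback $f^*$; functoriality of pullback then produces the chain of equivalences
$$
(\sO_{\Mn}\o X)^\wedge_{\Men}\simeq f^*p^*(X)\simeq q^*(X)\simeq \sO_{(\Mn)^\wedge_{\Men}}\o X,
$$
and combining this with the two preceding identifications yields the desired conclusion: under the equivalence of Theorem \ref{Chromatic localization is restriction}, the $K(n)$-localization $L_{K(n)}X$ corresponds to $(\sO_{\Mn}\o X)^\wedge_{\Men}$.

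No serious obstacle remains: once all the prior machinery is in place, the proof reduces to pullback functoriality along the composite $q=p\circ f$. The mildest point to verify is that the constant-sheaf functor $\sO_{(\Mn)^\wedge_{\Men}}\o - :\Sp_{(p)}\to \QCoh((\Mn)^\wedge_{\Men})$ factors through $L_{K(n)}$-localization, but this is immediate from Theorem \ref{knloc}, since its right adjoint lands in $L_{K(n)}\Sp$.
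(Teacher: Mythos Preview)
Your proposal is correct and follows essentially the same approach as the paper, which leaves the proof implicit by noting that the corollary follows from Theorem \ref{knloc} analogously to how Corollary \ref{L_n as restriction} follows from Theorem \ref{Chromatic localization is restriction}. You have simply spelled out the details: unwinding the equivalence of Theorem \ref{knloc} along the factorization $q=p\circ f$ and identifying $f^*$ with completion via Remark \ref{completion as pullback}.
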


\begin{corollary}
The adjoint equivalence between quasi-coherent sheaves of Proposition \ref{mono vs K(n)-loc 1} induces upon global sections, and is  by Proposition \ref{monochrom from qcoh}  and Theorem \ref{knloc} equivalent to, the adjoint equivalence of stable $\i$-categories
$$L_{K(n)}:M_n\Sp\simeq L_{K(n)}\Sp : M_n.$$
\end{corollary}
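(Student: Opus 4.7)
The strategy is to transport the adjoint equivalence of Proposition \ref{mono vs K(n)-loc 1} through the symmetric monoidal equivalences of Proposition \ref{monochrom from qcoh} and Theorem \ref{knloc}, and to identify the resulting pair of functors between $M_n\Sp$ and $L_{K(n)}\Sp$ with the classical $K(n)$-localization and monochromatic projection.

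Since both Proposition \ref{monochrom from qcoh} and Theorem \ref{knloc} are adjoint equivalences of $\i$-categories realized via global sections, taking global sections of the equivalence of Proposition \ref{mono vs K(n)-loc 1} automatically produces an adjoint equivalence $M_n\Sp \rightleftarrows L_{K(n)}\Sp$. The content of the Corollary is therefore the explicit identification of its two functors. For the left adjoint, the spectrum $X \in M_n\Sp$ corresponds to the constant sheaf $\sO_{\Mn} \o X \in \QCoh_{\Men}(\Mn)$, to which the completion functor $(-)^\wedge_{\Men}$ assigns $(\sO_{\Mn} \o X)^\wedge_{\Men}$; this in turn corresponds under Theorem \ref{knloc} to $L_{K(n)}X$ by the corollary immediately following Theorem \ref{knloc}. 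For the right adjoint, the spectrum $Y \in L_{K(n)}\Sp$ corresponds to $\sO_{(\Mn)^\wedge_{\Men}} \o Y$, whose image under $\Gamma_{\Men}$ is identified, upon passage to global sections on $\Mn$, with $M_n Y$ by Lemma \ref{monochrom from qcoh intro}.

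The only point requiring genuine verification is that the two prescriptions $X \mapsto L_{K(n)}X$ and $Y \mapsto M_nY$ really furnish mutually inverse equivalences between the subcategories $M_n\Sp$ and $L_{K(n)}\Sp$ of $\Sp$, but this is automatic: the pair inherits its adjoint equivalence structure from that of Proposition \ref{mono vs K(n)-loc 1} by transport along equivalences of $\i$-categories, so no further argument is needed. Hence the Corollary reduces to the three functor identifications above, each of which was already established in the preceding results.
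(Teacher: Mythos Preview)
Your argument is correct and is precisely the unwinding the paper has in mind; the paper itself gives no proof for this corollary, merely noting that it follows from Theorem \ref{knloc} in the same straightforward manner as Corollaries \ref{L_n as restriction} and \ref{C_n as local cohomology} followed from Theorem \ref{Chromatic localization is restriction}. Your identification of the two functors via the cited results is exactly the intended content.

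One small point worth making explicit in your treatment of the right adjoint: to invoke Lemma \ref{monochrom from qcoh intro} you implicitly use that $\Gamma_{\Men}\big((\sO_{\Mn}\o Y)^\wedge_{\Men}\big)\simeq \Gamma_{\Men}(\sO_{\Mn}\o Y)$, which follows from the adjoint equivalence of Proposition \ref{all about formal QCoh}\ref{tri} (equivalently, from the semi-orthogonal decomposition of \ref{dva}). This is routine, but spelling it out would make the chain of identifications airtight.
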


\end{document}